\numberwithin{equation}{section}
\newtheorem{thm}{Theorem}[section]
\newtheorem{lem}[thm]{Lemma}
\newtheorem{prop}[thm]{Proposition}
\newtheorem{cor}[thm]{Corollary}
\newtheorem{NN}[thm]{}
\theoremstyle{definition}\newtheorem{df}[thm]{Definition}
\theoremstyle{definition}\newtheorem{rem}[thm]{Remark}
\theoremstyle{definition}
\renewcommand{\phi}{\varphi}
\newcommand{\red}{\textcolor{red}}
\newcommand{\blue}{\color{blue}}
\newcommand{\N}{\mathbb{N}}
\newcommand{\Z}{\mathbb{Z}}
\newcommand{\R}{\mathbb{R}}
\newcommand{\C}{\mathbb{C}}
\newcommand{\T}{\mathbb{T}}
\newcommand{\I}{{\mathbb{I}}}
\newcommand{\hm}{homomorphism}
\newcommand{\dt}{\delta}
\newcommand{\ep}{\epsilon}
\newcommand{\la}{\langle}
\newcommand{\ra}{\rangle}
\newcommand{\andeqn}{\,\,\,{\rm and}\,\,\,}
\newcommand{\rforal}{\,\,\,{\rm for\,\,\,all}\,\,\,}
\newcommand{\CA}{$C^*$-algebra}
\newcommand{\SCA}{$C^*$-subalgebra}
\newcommand{\af}{{\alpha}}
\newcommand{\bt}{{\beta}}
\newcommand{\beq}{\begin{eqnarray}}
\newcommand{\eneq}{\end{eqnarray}}
\newcommand{\tforal}{\,\,\,\text{for\,\,\,all}\,\,\,}
\newcommand{\tand}{\,\,\,\text{and}\,\,\,}
\newcommand{\Wlog}{Without loss of generality}
\renewcommand{\phi}{\varphi}
\title{{{Approximately  Macroscopically  Unique States and Quantum Mechanics}}}
\author{Huaxin Lin and Hang Wang\\
 }
\date{}
\begin{document}

\maketitle

\begin{abstract}
We show that Mumford's {{Approximately Macroscopically}}  Unique (AMU) states exist 
for quantum systems consisting of unbounded self-adjoint operators when the commutators are small. 
In particular, AMU states always exist in position and momentum systems when the Planck constant
$|\hbar|$  is sufficiently small. However, we show that 
these standard quantum mechanical systems are far away from classical (commutative) mechanical systems
even when 
$|\hbar|\to 0.$ 
\end{abstract}

\section{Introduction}

In quantum mechanics, the states of a physical system are represented by unit vectors in a Hilbert space $H,$ where observables such as position, momentum, and angular momentum correspond to self-adjoint operators on $H.$  The Heisenberg Uncertainty Principle arises from the non-commutativity of these operators: for self-adjoint operators $T_1,T_2,...,T_n,$  the commutators $[T_i,T_j]=T_iT_j-T_jT_i$ quantify the inherent limitations in simultaneously measuring observables. For a unit vector $v\in H$ (a vector state), the expected value of an observable $T_j$ is given by $\la T_j(v),v\ra,$ and the joint expected values form a vector in $\R^n.$ While eigenstates of individual operators are 
expected to be abundant,  joint eigenstates for multiple observables are rare.
In fact, strictly speaking,
self-adjoint operators may not have any  eigenvalues, though every spectral point is an approximate eigenvalue.

\vspace{0.1in}
{\it Approximately {{Macroscopically}}  Unique (AMU) States}
\vspace{0.1in}

A central challenge in bridging quantum mechanics with classical intuition lies in identifying states where macroscopic observables exhibit minimal uncertainty.    David Mumford \cite{Mumford1} 
recently proposed the concept of Approximately Macroscopically Unique (AMU) states, defined for macroscopic observables 
($n$-tuple of self-adjoint operators) $T_1,T_2,...,T_n$ 
as:
\beq\label{DAMU1}
{\rm AMU}(\{T_1, T_2,...,T_n; \sigma\}):=\{v\in H: \, \|v\|=1, \|(T_j- \la T_j v, v\ra I)(v)\|<\sigma, 1\le j\le n\},
\eneq
with a given tolerance $\sigma>0$
(see (\cite[p.164, Ch 14]{Mumford1}).

These states avoid the paradoxical superpositions (e.g., Schr{\"o}dinger's cat) and represent a ``classical-like" regime.  One may often hear that ``when $\hbar$ goes to zero,
one recovers the classical mechanics."  
 Indeed, if observables with small commutators are close 
to commuting ones, then the associated AMU set is not empty.  
Mumford suggested  that AMU sets are non-empty where observables have  small commutators
(see \cite[Ch.14]{Mumford1}).
This motivates his broader question: When can nearly commuting operators be approximated by commuting ones?

\vspace{0.1in}
{\it Approximation by Commuting Operators}
\vspace{0.1in}

This question is related to the somewhat casual statement that classical 
mechanics emerges as
$\hbar \to 0.$  
Mumford (see \cite{Mumford1} and \cite{Mumford2}) formalized this by seeking an $n$-tuple of commuting self-adjoint operators $S_1, S_2,..., S_n$ close to a given $n$-tuple of self-adjoint ${T_1,T_2,...,T_n}$ with small commutators. Specifically, for fixed $n\in \N$ and $\ep>0,$ does there exist $\dt>0$  such that if 
$\|[T_i, T_j]\|<\dt,$  then there are commuting operators $S_1, S_2,...,S_n$
 satisfying $\|T_i-S_i\|<\ep$ for $1\le i\le n$?
While this, in general, is not true as Mumford observed. Nevertheless, he would like to know when this holds.
Recent advances \cite{Linself} show this holds under certain spectral conditions: when the synthetic and essential synthetic spectra of the operator tuple are close. For $n=2,$ definitive results exist \cite{Linself}, though general cases remain subtle (see also \cite{Og}).

\vspace{0.1in}
{\it Existence of AMU States}
\vspace{0.1in}

Despite the complexity of the stability issues, Mumford’s original  question ---whether small commutators guarantee non-empty AMU sets -- has an affirmative answer. 

{\bf Theorem A} (\cite[Theorem 1.4]{Lincmp2025}) 
For $n\in\N$ and any $\ep>0,$ there exists $\dt(n,\ep)>0$ such that if $\|[T_i,T_j]\|<\dt$ with $\|T_j\|\le 1$ ($1\le i,j\le n$), then for every
$\lambda=(\lambda_1, \lambda_2,...,\lambda_n)$ in the $\ep/4$-synthetic spectrum $s{\rm Sp}^{\ep/4}((T_1,T_2,...,T_n)),$  there exists a unit vector $v\in H$ with $\|(T_j-\lambda_jI)v\|<\ep.$  

This guarantees the existence of abundant AMU states under small-commutator constraints.

\vspace{0.1in}
{\it Unbounded self-adjoint operators}
\vspace{0.1in}

Many physical observables, such as position and momentum, are represented by unbounded self-adjoint operators. Consider $H=L^2(\R),$  where $S_1(f)(t)=tf(t)$  (position) and $S_\hbar (f)(t)=-i\hbar {\frac{df}{dt}}(t)$ (momentum) satisfy  $[S_1,S_{\hbar}]=i\hbar I.$
While $\|[S_1,S_\hbar]\|=|\hbar |$ is small,  these operators are unbounded. We generalize Theorem  A to such cases:

\begin{thm}\label{TAMU}
Let {{$n\in\N$}}, $\ep>0$ and $M>1.$  There exists $0<\eta<\ep/4$ (depending only on 
$\ep$ and $M$) and $\dt({n},\eta, M, \ep)>0$ satisfying the following:

Let $\{h_1,h_2,...,h_n\}$ be an $n$-tuple of (unbounded) self-adjoint operators densely defined 
on an infinite dimensional separable Hilbert space $H$ such that
\beq\label{TAMU-1}
\|h_ih_j-h_{ j}h_{ i}\|<\dt,\,\,\, i,j=1,2,...,n.
\eneq
Then, for any $\lambda=(\lambda_1, \lambda_2,...,\lambda_n)\in s{\rm Sp}_M^{\eta}((h_1, h_2,...,h_n)),$
there are unit vectors $v\in H$ such that
\beq\label{TAMU-1+}
\max_{1\le i\le n}\|(h_i-\lambda_i)(v)\|<\ep.
\eneq
\end{thm}
Here $s{\rm Sp}_M^{\eta}((h_1, h_2,...,h_n))$ (see Definition \ref{Dsynsp2})  is the $\eta$-synthetic spectrum of the $n$-tuple 
$(h_1, h_2,...,h_n)$ in the bounded region $\{\zeta\in \R^n: \|\zeta\|_2\le M\}$. 
It forms a non-empty compact subset whenever 
${\rm sp}((\sum_{i=1}^n h_i^2)^{1/2})\cap [0, M]\not=\emptyset$ (as $\dt$ is small).
One also notes from \eqref{TAMU-1+} that $|\la h_i(v), v\ra-\lambda_i|<\ep$ ($1\le i\le n$).
In other words,  Mumford's AMU states always exist when $\dt$ is small (see Theorem \ref{thm:6main}).

\vspace{0.1in}
{\it Applications to classical quantum mechanics}
\vspace{0.1in}

For $S_1$ and $S_\hbar,$ Theorem \ref{TAMU} ensures AMU states exist when $|\hbar|$ is small.
This shows that, when $\hbar\to 0,$ the quantum mechanical systems behave similarly to the classical ones in that respect. 
However,  perhaps equally important, in this paper, we also show that, when $\hbar\to 0,$ one may not recover 
classical mechanics.
In fact, we show (see Theorem \ref{Tnotrecover}) that the pair $(S_1, S_\hbar)$ is far away from any commuting pairs of self-adjoint operators 
no matter how small $|\hbar|$ might be!   In particular, one should not expect a ``classical limit" 
exists when $\hbar \to 0$ in the usual sense.  Perhaps this is precisely the difference between quantum mechanics and 
classical mechanics.

For angular momentum, one considers unbounded self-adjoint operators 
$L_x, L_y, L_z$ on $L^2(\R^3).$  
They satisfy the commutator relation:
\beq
[L_i, L_j]=\sqrt{-1}\hbar \sum_{ k=1}^3\ep_{ijk}L_{k},\,\,\, i,j\in\{{x,y,z}\},
\eneq
where $\ep_{ijk}$ denotes the Levi-Civita symbol. In particular 
the commutator $[L_x, L_y]$ as an operator is not bounded. 
They appear to lie outside the original concern of Mumford.
However, our extended main result (see  Theorem \ref{TAMU1} and Theorem \ref{LAnglemom}) also covers this case.
In other words, when $|\hbar|$ is sufficiently small, AMU states for unbounded operators 
$L_x, L_y$ and $L_z$ also exist despite the fact that the commutators are not small and in fact unbounded.

\vspace{0.1in}

{\it Technical remark} 

\vspace{0.1in}

A key technique in our proof is to use an asymptotic one-point compactification to convert  $n$-tuple of (non-commutative) unbounded operators into bounded ones, thereby enabling the application of \CA\,  theory from [4]. While this transformation is essential, it is not a panacea. The approach introduces new complexities when returning to the original non-compact spaces, and, more fundamentally, the core difficulties of unboundedness persist in a transformed guise. A central manifestation of this is that for two unbounded self-adjoint operators $h_1$ and $h_2,$ the operator $[h_1, h_2^2]=h_1h_2^2-h_2^2h_1$
may have  a large norm, or even be unbounded, despite the commutator
$[h_1,h_2]=h_1h_2-h_2h_1$
 being bounded with arbitrarily small norm. This discrepancy causes the standard functional calculus to fail. Thus, the compactification does not make the ``unbounded" issue disappear; instead, it allows us to frame and systematically address it. A significant portion of our subsequent analysis is dedicated to this 
resolution.

\vspace{0.1in}
{\it Organization of the paper}
\vspace{0.1in}

Section 2 establishes the foundational terminology and notation for the paper. In Section~3, we introduce a method characterized as an almost commutative one-point compactification. This leads to the development in Section 4 of a synthetic spectrum for $n$-tuples of self-adjoint operators with small commutators. The proofs of the main results are presented in Sections 5 and 6, while Section 7 is dedicated to applying the findings from Section 5 to classical quantum mechanics.
\section*{Acknowledgment}
This research is partially supported by a SIMIS research grant and 
by the grants 23JC1401900, NSFC 12271165 and 
by Science and Technology Commission of
Shanghai Municipality (No. 22DZ2229014).

\section{Preliminaries}

\begin{df}\label{Dbf1}
Let $A$ be a \CA.  Denote by $A^{\bf 1}$ the closed unit ball of $A.$

\end{df}

\begin{df}\label{Dcpc}
Let $A$ and $B$ be two \CA s. A completely positive contractive linear  map $\psi: A\to B$  is called a c.p.c. map. 
\end{df}

\begin{df}\label{DXep}
Let $X$ be a metric space, $Y\subset X$ be a subset and $\eta>0.$
Set
\beq
Y_\eta=\{x\in X: {\rm dist}(x, Y)<\eta\}.
\eneq
\end{df}

\begin{df}
If $\zeta=(t_1,t_2,...,t_n)\in \R^n$ (for some $n\in \N$), we write that  $\|\zeta\|_2=\sqrt{\sum_{i=1}^nt_i^2}.$
\end{df}

\begin{df}\label{DH}
Throughout the paper, $H$ is
always 
an infinite dimensional separable Hilbert space,
$B(H)$ the \CA\, of all bounded operators on $H$ and 
${\cal K}$ the \CA\, of all compact operators on $H.$

\end{df}

\begin{df}\label{Dntuple}
Let $T_1, T_2,...,T_n$ be (unbounded) self-adjoint operators on $H.$ 
Denote by $D(T_j)$ the domain of $T_{ j},$ $j=1,2,...,n.$ 
We say  that $T_1, T_2,..., T_n$ form an $n$-tuple densely defined self-adjoint operators 
on $H,$ if $\bigcap_{j=1}^n D(T_j)$ is dense in $H.$ Denote by 
$D:=D(T_1,T_2, ...,T_n)$ the common domain of the $n$-tuple.
We also assume that there is a dense subspace $D_0\subset D$ such 
that $\sum_{j=1}^n T_j^2(D_0)\subset D.$ 
\end{df}


\begin{rem}\label{RemSn}

Let $n \ge 2.$
Let $C(S^n)$ denote the $C^*$-algebra of continuous functions on the $n$-sphere $S^n$, which we regard as a compact subset of $\mathbb{R} \times \mathbb{R}^n$ via the identification
\[
S^n = \left\{ (r, \xi) \in \mathbb{R} \times \mathbb{R}^n : -1 \le r \le 1,\, \|\xi\|_2 \le 1,\, \|\xi\|_2^2 = 1 - r^2 \right\}.
\]
Here $\xi = (s_1, s_2, \ldots, s_n) \in \mathbb{I}^n:=[-1,1]^n.$ 
We will use the metric 
on $S^n$ inherited from the Euclidean space $\mathbb{R} \times \mathbb{R}^n.$
In particular, we will write $S^n\subset \I^{n+1}.$
Also we identify the north pole $\zeta^{np}$ with the point $(1,0,0,...,0).$

Define functions $r, x_1, x_2, \ldots, x_n \in C(S^n)$ by
\[
{{r(r, s_1, s_2, \ldots, s_n) = r, \quad x_j(r, s_1, s_2, \ldots, s_n) = s_j \quad \text{for } j = 1, 2, \ldots, n.}}
\]
%
%
%

The \CA\, $C(S^n)$ is generated by commuting self-adjoint operators 
$r,$ $x_1, x_2,...,x_n$ such that
$-1\le r\le 1,$ $\|x_i\|_{\infty}\le 1,$ and,
\beq \label{eq:generatorSn}
\sum_{j=1}^n x_j^*x_j=1-r^2.
\eneq

\end{rem}
%
%
%

\begin{df}\label{DR2toS2}
Similarly, define $g_0: \R^n \to \R$ and $g_{j}: \R^n\to  \R$ by
\beq\label{eqPhiS2}
g_0((t_1, t_2,...,t_n))={\sum_{k=1}^n|t_k|^2-1\over{\sum_{k=1}^n|t_k|^2+1}}\andeqn
g_{j}((t_1, t_2,...,t_n))={2t_j\over{\sum_{k=1}^n|t_k|^2+1}},
\eneq	
for $j=1,2,...,n.$ Then $-1\le g_j\le 1$ for $j=0,1,2,...,n.$  Moreover
\beq
\sum_{j=1}^ng_{j}(\zeta)^2=1-g_0^2(\zeta)  \tforal \zeta\in \R^n.
\eneq
\end{df}

We also have the map $\Phi_{S^n}: \R^n\to  S^n\setminus \{\zeta^{np}\}=S^n\setminus \{(1,0,\ldots, 0)\}\subset [-1,1)\times \R^n$ defined by, for $\zeta\in \R^n,$
\beq
\hspace{-0.15in}\Phi_{S^n}(\zeta)=(g_0(\zeta), g_1(\zeta),,...,g_{n}(\zeta)): \R^n\to \{(r, \xi)\in [-1,1)\times \R^n: r^2+\|\xi\|_2^2=1\}.
\eneq
Note that $\Phi_{S^n}$ is a homeomorphism from  $\R^n$  onto 
the unit $n$-sphere minus the north pole.  The inverse is the stereographic projection 
\[
\R^{n+1} \supset S^n\setminus \{\zeta^{np}\}\rightarrow \R^n \qquad (r, x_1, \ldots, x_n)\mapsto \left(\frac{x_1}{1-r}, \ldots, \frac{x_n}{1-r}\right),
\]
which is denoted by ${\Phi_{S^n}}^{-1}$.

\begin{df}\label{Mdelta}

Let $M>1$ and $\dt_M=1-{M^2-1\over{M^2+1}}={2\over{M^2+1}}<1.$  
Conversely, for $0<\dt<1,$ put $M_\dt=\left({2\over{\dt}}-1\right)^{1/2}.$
Then the image of the closed Euclidean ball of radius 
$M$ under $\Phi_{S^n}$ is the closed spherical bowl
$\{r\le 1-\dt_M\}$:
\beq
\Phi_{S^n}(\{\zeta\in \R^n: \|\zeta\|_2\le M\})=
\{(r, t_1, t_2,..., t_n)\in S^n:  1-r\ge \dt_M\}.
\eneq

For $M>1,$ there exists a continuous function $F_M: [0,2]\to \R_+$ 
with $F_M(0)=0$ 
such that, if $x, y\in \Phi_{S^n}(\{\zeta\in \R^n: \|\zeta\|_2\le M\})$
and 
$\|x-y\|_2=t\ge 0,$ then 
\beq\label{DMS}
\|\Phi_{S^{n}}^{-1}(x)-\Phi_{S^{n}}^{-1}(y)\|_2\le F_M(t)\,\,\,\rforal t\in [0,2].
\eneq

There exists also a continuous function 
$G_M: [0, 2M]\to [0,2]$ such that $G_M(0)=0$ and  if $\|\xi-\zeta\|_2=t$ for 
$\xi, \zeta\in \{z\in \R^n: \|z\|_2\le M\},$
 then, for $t\in [0, 2M],$
\beq
\|\Phi_{S^n}(\xi)-\Phi_{S^n}(\zeta)\|_2^2\le G_M(t).
\eneq

These  functions  $F_M$  and $G_M$ are fixed and will be used later.
\end{df}

\begin{df}\label{dfMR}
Let  $M>1.$  
Fix   $e_M\in C_0(\R)$   such  that 
$e_M(s)=1,$ if {$|s|\le  M,$} $e_M(s)=0$
 if $|s|> M+1$ and $e_M$ is linear when $|s|\in [M, M+1].$ In particular, $0\le e_M\le  1.$
 Moreover $e_{M+1}e_M=e_Me_{M+1}=e_M.$ 
 

Put $\zeta=(t_1, t_2, ...,t_n)\in \R^n.$ 
Define $f_{M,j}(\zeta)=t_j e_M(\|\zeta\|_{_2}),$ $j=1,2,...,n.$
Recall that $\zeta^{np}=(1, 0,\ldots, 0)\in S^{n}$ is the north pole.
Define, for each $\xi\in S^n\setminus \{\zeta^{np}\},$ 
\beq\label{dfMrn-2}
g_{M,j}(\xi)&=& f_{M,j} \circ {\Phi_{S^n}}^{-1}(\xi),\label{dfMrn-3}\\
g_{M,j}(\zeta^{np})&=& 0,\,\,\, j=1,2,...,n,\\
e_M^{S^n}(\xi)&=& e_M(\|{\Phi_{S^n}}^{-1}(\xi)\|_2),\andeqn\\
e_M^{S^n}(\zeta^{np})&=&0.
\eneq
They are continuous functions on $S^n$ with compact supports in $S^n\setminus \{\zeta^{np}\}$.
\end{df}

\begin{df}\label{Opdr2tos2}
Let $H$ be an infinite dimensional Hilbert space
and let $T$ be a densely defined positive operator on $H$ with domain $D.$
By the spectral theorem for (unbounded) self-adjoint operators, we write 
\beq
T=\int_{{\rm sp}(T)} \lambda\, dE_\lambda,
\eneq
where $\rm sp(T)$ denotes the spectrum of $T$ and 
$dE_{\lambda}$ is the spectral measure associated with $T$.
Define the operators on $D$ by
\beq
(1+T)^{-1}|_D=\int_{{\rm sp}(T)} {1\over{1+|\lambda|}} dE_\lambda\andeqn 
(1+T)^{-1/2}|_D=\int_{{\rm sp}(T)} {1\over{\sqrt{1+|\lambda|}}} dE_\lambda.
\eneq 
It follows that $\|(1+T)^{-1}|_D\|\le 1$ and 
$\|(1+T)^{-1/2}|_D\|\le 1.$ We extend these operators to bounded operators on $H$, and denote them by $(1+T)^{-1}$  and $(1+T)^{-1/2},$ respectively.

In general, if $A$ is a linear operator defined on a dense linear subspace $D$ of $H$ and 
$\|A|_D\|\le M,$ then we may use $A$ for the unique extension of $A$ on $H.$ In particular, we will write 
$\|A\|\le M.$
\end{df}

\begin{rem}\label{RfH}
If $T$ is a densely defined self-adjoint operator and $f\in C_0(\R),$ then 
\beq
f(T)=\int_{{\rm sp}(T)}f({{\lambda}}) dE_\lambda.
\eneq
Let $P_n$ be the spectral projection of $T$ associated with $(n-1, n],$ $n\in \N.$
Then  $P_iP_j=0$ if $i\not=j.$ Hence 
\beq
\|f(T)\|=\|\sum_{n=1}^\infty f(T)P_n\|\le \sup\{|f(t)|: t\in \R\}.
\eneq
In particular, $f(T)$ is a bounded operator. 
\end{rem}

\section{Unbounded $n$-tuples of self-adjoint operators and almost $n$-spheres}

Let us begin with the following folklore lemma, which will be convenient for our purposes.

\begin{lem}\label{3unb}
Let $T$ be a  linear operator  defined on a dense subspace $D$ of an infinite separable Hilbert space $H.$
Suppose that $\|T^*T|_D\|\le M$ for some $M>0.$ 
Then $\|T|_D\|\le M^{1/2}.$
\end{lem}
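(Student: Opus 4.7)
The plan is essentially a one-line Cauchy--Schwarz argument applied pointwise on $D$. First I would unpack the hypothesis: the statement $\|T^*T|_D\|\le M$ presupposes that $T^*T$ is defined on all of $D$, i.e.\ $D\subset D(T^*T)$. In particular, for every $x\in D$ one has $Tx\in D(T^*)$ and $\|T^*Tx\|\le M\|x\|$.

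For such $x$ I would compute $\|Tx\|^2 = \la Tx, Tx\ra = \la x, T^*Tx\ra$, where the second equality is just the defining property of the adjoint, legitimate because $Tx\in D(T^*)$. Applying the Cauchy--Schwarz inequality gives
$$\|Tx\|^2 \le \|x\|\cdot \|T^*Tx\| \le M\,\|x\|^2,$$
and taking square roots yields $\|Tx\|\le M^{1/2}\|x\|$ for every $x\in D$, which is precisely $\|T|_D\|\le M^{1/2}$.

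There is no real obstacle here; the only point worth pausing on is the interpretation of the hypothesis. One must read $\|T^*T|_D\|\le M$ as asserting both that $T^*Tx$ is a well-defined vector in $H$ for every $x\in D$ and that the resulting linear map $D\to H$ has operator norm at most $M$. Under that reading (which is forced by the very notation $\|T^*T|_D\|$), the adjoint manipulation above is automatic and no extension-theoretic subtlety intervenes.
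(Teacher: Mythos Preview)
Your proof is correct and in fact more direct than the paper's. The paper argues via an increasing sequence of finite-rank projections $q_k$ with $q_kH\subset D$: from $\|q_kT^*Tq_k\|\le M$ and the $C^*$-identity $\|Tq_k\|^2=\|(Tq_k)^*(Tq_k)\|=\|q_kT^*Tq_k\|$ (valid since $Tq_k$ is bounded of finite rank) one gets $\|Tq_k\|\le M^{1/2}$ for all $k$, hence $\|T|_D\|\le M^{1/2}$. Your pointwise Cauchy--Schwarz computation $\|Tx\|^2=\la x,T^*Tx\ra\le M\|x\|^2$ bypasses the projection apparatus entirely and reaches the same conclusion in one line; the only ingredient is the adjoint identity, which, as you correctly note, is licensed by the hypothesis $D\subset D(T^*T)$. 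The paper's route fits its operator-algebraic style (and the same finite-rank-cutoff device recurs in later lemmas), but for this particular statement your argument is the cleaner one.
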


\begin{proof}
Let $q_1, q_2,...,q_m,...$ be an increasing sequence of finite rank projections 
such that $q_nH\subset D$ and $\bigcup_{n=1}^\infty q_nH$ is dense in $D$ and hence in $H.$
Then
\beq
\|q_kT^*Tq_k\|\le M.
\eneq
Since $Tq_k$ is bounded, we must have $\|Tq_k\|\le M^{1/2}.$
Since $\bigcup_{n=1}^\infty q_{n}H$ is dense in $H,$ This implies 
$\|T|_D\|\le M^{1/2}.$
\end{proof}

\begin{lem}\label{Linqx}
Let $\{h_1,h_2,...,h_n\}$ be an $n$-tuple of densely defined self-adjoint operators on $H.$ 
Set $D=D(h_1,h_2,...,h_n)$ and $d=\sum_{j=1}^n h_j^2.$ 
Then the following hold:

{\rm (i)} $0\le  d(1+d)^{-1}|_D\le 1;$

{\rm (ii)} $\|h_j(1+d)^{-1/2}|_D\|\le 1,$ $\|(1+d)^{-1/2}h_j|_D\|\le 1$ and 
$\||h_j|(1+d)^{-1/2}\|\le 1,$ $j=1,2,...,n.$

{\rm (iii)} $\|h_j(1+d)^{-1}|_D\|\le 1$ and $ \|(1+d)^{-1}h_j|_D\|\le 1,$ $j=1,2,...,n.$


{\rm (iv)}
For any $M>1, $ $\|e_M(d^{1/2})h_j|_D\|\le M.$ 

{\rm (v)} $\|2 (1+d)^{-1/2} h_j(1+d)^{-1/2}\|\le 1.$

\end{lem}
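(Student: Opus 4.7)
The plan is to treat $d := \sum_j h_j^2$ as a positive self-adjoint operator on $H$ (its closure from the common core $D_0$ of Definition \ref{Dntuple}), so that $(1+d)^{-1/2}$, $(1+d)^{-1}$ and $e_M(d^{1/2})$ are all bounded operators defined by the Borel functional calculus, as described in Definition \ref{Opdr2tos2}. Part (i) is then immediate from functional calculus applied to the function $t \mapsto t/(1+t)$, which maps $[0,\infty)$ into $[0,1)$.

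The engine driving (ii)--(iv) is the form inequality $h_j^2 \le d$, which holds on $D$ because $d - h_j^2 = \sum_{k\ne j} h_k^2 \ge 0$. Conjugating by $(1+d)^{-1/2}$ gives
\[
(1+d)^{-1/2} h_j^2 (1+d)^{-1/2} \;\le\; (1+d)^{-1/2}\, d\, (1+d)^{-1/2} \;=\; d(1+d)^{-1} \;\le\; 1,
\]
i.e.\ $T^* T \le 1$ for $T := h_j (1+d)^{-1/2}$, so Lemma \ref{3unb} yields $\|T\| \le 1$. Passing to adjoints gives $\|(1+d)^{-1/2} h_j|_D\| \le 1$, and replacing $h_j$ by $|h_j| = (h_j^2)^{1/2}$ (which satisfies the same form bound $|h_j|^2 = h_j^2 \le d$) yields the third estimate in (ii). Part (iii) is then the product of (ii) with $\|(1+d)^{-1/2}\| \le 1$, via the factorisation $(1+d)^{-1} = (1+d)^{-1/2}(1+d)^{-1/2}$.

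For (iv), the same form inequality together with the commutativity of functions of $d$ gives
\[
\bigl(h_j e_M(d^{1/2})\bigr)^*\bigl(h_j e_M(d^{1/2})\bigr) \;=\; e_M(d^{1/2}) h_j^2 e_M(d^{1/2}) \;\le\; d \cdot e_M(d^{1/2})^2.
\]
A quick analysis of the scalar function $g(t) := t^2 e_M(t)^2$ (equal to $t^2 \le M^2$ on $[0,M]$; bounded by $M^2$ on $[M,M+1]$ since $t(M+1-t)$ is decreasing there; and zero beyond $M+1$) gives $\sup g \le M^2$, so $\|d\cdot e_M(d^{1/2})^2\|\le M^2$ by functional calculus, and Lemma \ref{3unb} delivers $\|h_j e_M(d^{1/2})\|\le M$; taking adjoints then gives (iv). Finally, (v) follows from the scalar inequality $\pm 2t \le 1 + t^2$: as quadratic forms on $D$, $\pm 2 h_j \le 1 + h_j^2 \le 1 + d$, and conjugating by $(1+d)^{-1/2}$ lands $2(1+d)^{-1/2} h_j (1+d)^{-1/2}$ in $[-1,1]$.

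The principal delicacy is justifying the functional-calculus manipulations at the level of unbounded operators---that $(1+d)^{-1/2}$ and $e_M(d^{1/2})$ send $H$ into $D(h_j)$ so the compositions in question are densely defined, and that the form inequality $h_j^2 \le d$ can be carried through the conjugations without domain issues. The cleanest remedy is the spectral-projection cutoff already used in Lemma \ref{3unb}: verify every inequality on the range of a spectral projection $q_N$ of $d$ onto $[0,N]$, where everything in sight is bounded and classical functional calculus applies, then let $N \to \infty$ and invoke Lemma \ref{3unb} to pass to the unbounded setting.
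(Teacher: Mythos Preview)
Your proposal is correct and follows essentially the same strategy as the paper: the form inequality $h_j^2 \le d$ conjugated by bounded functions of $d$, with spectral cutoffs of $d$ (the paper's $p_m$) to justify the unbounded manipulations via Lemma~\ref{3unb}. Parts (i)--(iv) match the paper's argument almost line for line, though for (iv) the paper cuts off with spectral projections of $h_j$ rather than of $d$.

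Your argument for (v) is genuinely simpler than the paper's. The paper first bounds $2(1+d)^{-1/2}|h_j|(1+d)^{-1/2}$ by passing through the operator inequality $(2p_m|h_j|p_m)^2 \le 4p_m d p_m$, taking square roots, and invoking $2d^{1/2}\le 1+d$; it then separately shows $(1+d)^{-1/2}(|h_j|\pm h_j)(1+d)^{-1/2}\ge 0$ to transfer the bound from $|h_j|$ to $h_j$. Your route---observing $(h_j\mp 1)^2\ge 0$ gives $\pm 2h_j \le 1+h_j^2 \le 1+d$ directly, then conjugating once---avoids the detour through $|h_j|$ entirely and is the cleaner proof.
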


\begin{proof}
Note that we have established that $0\le (1+d)^{-1}\le 1.$ 
Therefore 
\beq
\|d(1+d)^{-1}|_D\|\le 
\|1-(1+d)^{-1}\| \le 1.
\eneq
This proves (i).

For (ii), for each $m\in \N,$ let $p_m$ be the spectral projection of $d$ associated with 
$[0, m].$ 
Then 
\beq
0\le p_mh_j^2 p_m\le p_m(\sum_{i=1}^n h_j^2)p_m=p_m dp_m\le m p_m.
\eneq
Hence we may view $p_m h_j^2 p_m$ as a bounded operator on $H.$ 
By Lemma \ref{3unb}, $\|h_jp_m\|\le m^{1/2}$ and 
$\||h_j|p_m\|\le m^{1/2}.$
We also have that
\beq
\|p_m(1+d)^{-1/2} h_j^2(1+d)^{-1/2}p_m\|&=& \|(1+d)^{-1/2} p_m h_j^2p_m (1+d)^{-1/2}\|\\
&\le &\|(1+d)^{-1/2} p_mdp_m(1+d)^{-1/2}\|\\
&\le & \|(1+d)^{-1/2} d (1+d)^{-1/2}p_m\|\\
&=&\|d(1+d)^{-1}p_m\|\le 1.
\eneq
Let $D_0$ be the dense set as in Definition \ref{Dntuple}. Then, by Lemma \ref{3unb}, 
$h_j(1+d)^{-1/2}p_m$ is defined on $D_0$ and 
\beq
\|h_j(1+d)^{-1/2}p_m|_{D_0}\|\le 1.
\eneq 
Note that $p_m(D_0)\subset D$ and $\bigcup_{m=1}^\infty p_m(D_0)$ is dense in $H.$
This implies that $h_j(1+d)^{-1/2}$ can be (uniquely) extended to a bounded operator on $H$ with
\beq
\|h_j(1+d)^{-1/2} \|\le 1,\,\,\, j=1, 2, ..., n.
\eneq
Similarly, 
\beq
\|(1+d)^{-1/2}h_j\|\le 1,\,\,\, j=1, 2, ..., n.
\eneq
The same argument also shows that
\beq\label{31+1}
\||h_j|(1+d)^{-1/2}\|\le 1.
\eneq

For (iii), we note that $0\le (1+d)^{-1/2}\le 1.$ Hence
\beq
\|h_j(1+d)^{-1}|_D\|\le \|h_j(1+d)^{-1/2}|_D\|\|(1+d)^{-1/2}\|\le 1.
\eneq

For (iv), let $M>1$ and $q_{m,j}$ be the spectral projection of $h_j$ associated with 
the subset $[-m, m].$ Then $h_jq_{m,j}$ is a bounded operator on $H$ and equals $q_{m,j}h_j$.
We have 
\beq
\|e_M(d^{1/2}) h_jq_{m,j}\|^2&=&\|q_{m,j}h_je_M(d^{1/2})\|^2\\
&\le & \|e_M(d^{1/2})q_{m,j}h_j ^2q_{m,j}e_M(d^{1/2})\|\\
&\le &\|e_M(d^{1/2})h_j^2e_M(d^{1/2})\|\\
&\le& \|e_M(d^{1/2})de_M(d^{1/2})\|\le  M^2.
\eneq
Since this holds for all $m,$ and $D_0\subset \bigcup_{m=1}^\infty q_{m,j}H,$
we conclude that $\|e_M(d^{1/2}) h_j\|\le M.$

For (v), we note that
  \beq\label{31-8n}
 (2p_m|h_j|p_m)^2=4p_m|h_j|p_m|h_j|p_m\le 4p_mh_j^2p_m
 \le  4p_m(\sum_{i=1}^n h_i^2)p_m=4p_mdp_m.
 \eneq
 On the other hand, {{by the spectral theory (for unbounded self-adjoint operators), we know that $(d^{1/2}-1)^2$ is positive.}}
 In particular,  $2d^{1/2}\le (1+d).$ 
 Thus, by \eqref{31-8n},
 \beq
 2p_m|h_j|p_m\le 2(p_mdp_m)^{1/2}= p_m2d^{1/2}p_m\le p_m(1+d)p_m. 
 \eneq
Then  
 \beq
 \hspace{-0.3in}\|p_m(1+d)^{-1/2} 2|h_j|(1+d)^{-1/2}p_m\|&=&
 \|(1+d)^{-1/2} 2p_m|h_j|p_m(1+d)^{-1/2}\|\\
 &\le& \|(1+d)^{-1/2}p_m(1+d)p_m(1+d)^{-1/2}\|\le 1.
 \eneq
  It follows  from Lemma \ref{3unb} that
 \beq
 \sqrt{2}\||h_j|^{1/2}(1+d)^{-1/2}p_m\|\le 1.
 \eneq
 Since this holds for all $m\in \N,$ we conclude that
 \beq
 \sqrt{2}\|h_j|^{1/2}(1+d)^{-1/2}\|\le 1.
 \eneq
 It follows that
 \beq
\|2 (1+d)^{-1/2}|h_j|(1+d)^{-1/2}\|\le 1.
 \eneq
 By the spectral theory, $|h_j|\pm h_j$ is a positive operator. 
 Hence, $p_m(|h_j|\pm h_j)p_m\ge 0$ as a bounded operator. 
 We have that
 \beq\label{31-n2}
 p_m(1+d)^{-1/2} (|h_j|\pm h_j)(1+d)^{-1/2}p_m=(1+d)^{-1/2} p_m(|h_j|\pm h_j)p_m(1+d)^{-1/2}\ge 0.
 \eneq
 By (ii), $(1+d)^{-1/2} h_j(1+d)^{-1/2}$ is a bounded operator. We also proved  that \\
 $2 (1+d)^{-1/2}|h_j|(1+d)^{-1/2}$ is a  bounded 
 operator. Since  $\bigcup_{m=1}^\infty p_m(D_0)$ is dense in $H,$ \eqref{31-n2} implies that 
 \beq
(1+d)^{-1/2} (|h_j|\pm h_j)(1+d)^{-1/2}\ge 0.
 \eneq
 Hence 
\beq\nonumber
- (1+d)^{-1/2} |h_j|(1+d)^{-1/2} \le (1+d)^{-1/2} h_j(1+d)^{-1/2}
\le (1+d)^{-1/2} |h_j|(1+d)^{-1/2}.
\eneq
It follows that
\beq
\|(1+d)^{-1/2} 2h_j(1+d)^{-1/2}\|\le \|(1+d)^{-1/2} 2|h_j|(1+d)^{-1/2}\|\le 1.
\eneq
This proves (v).
\end{proof}

%
%
%

\begin{df}\label{Dabj}
Let $\{h_1, h_2,...,h_n\}$ be an $n$-tuple of densely defined self-adjoint operators (see Definition~\ref{Dntuple}). 
Put $d=\sum_{j=1}^n h_j^2.$ 
Define $a=(-1+d)(1+d)^{-1},$ $b_j=2h_j(1+d)^{-1},$ $j=1,2,\ldots, n.$ Then
$a, b_j\in B(H)$ (see Lemma \ref{Linqx}). Moreover
\beq\label{eqahj}
\sum_{j=1}^n b_j^*b_j+a^2=1.
\eneq
\end{df}

\begin{lem}\label{Lappcom-1n}
Let $\eta>0.$ 
Suppose that
$\{h_1,h_2, ...,h_n\}$ is an $n$-tuple of densely defined (unbounded) self-adjoint operators satisfying
\beq
\|h_ih_j-h_{j}h_{i}\|<\eta/6n
\eneq
for all $i,\,j\in\{1,2,...,n\}.$
Then the associated bounded transforms in Definition~\ref{Dabj} satisfy
\beq
\|[a, b_j]\|<\eta, \quad \|b_j-b_j^*\|<\eta  \andeqn \|[b_i, b_j]\|<\eta.
\eneq
Moreover, 
\[
\|[h_i, (1+d)^{-1}]\|<\eta/3, \quad \|[h_i, (1+d)^{-1/2}]\|<\eta/3.
\]
In other words, by setting $\widetilde h_j=h_j(1+d)^{-1}$ and $\bar h_j=(1+d)^{-1/2}h_j(1+d)^{-1/2}$,one has 
\[
\|\widetilde h_j-\bar h_j\|<\eta/3 \qquad j=1,\ldots, n.
\]

\end{lem}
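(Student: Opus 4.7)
The plan is to reduce all six claimed bounds to sharp estimates on the two commutators $[h_i,(1+d)^{-1}]$ and $[h_i,(1+d)^{-1/2}]$, and to derive those estimates from the hypothesis $\|[h_i,h_j]\|<\eta/(6n)$ via the Leibniz rule and the bounds of Lemma~\ref{Linqx}. First, I would use the resolvent identity $[h_i,(1+d)^{-1}]=-(1+d)^{-1}[h_i,d](1+d)^{-1}$ together with the Leibniz expansion $[h_i,d]=\sum_{k=1}^n(h_k[h_i,h_k]+[h_i,h_k]h_k)$, producing $2n$ summands of the form $(1+d)^{-1}h_k[h_i,h_k](1+d)^{-1}$ or $(1+d)^{-1}[h_i,h_k]h_k(1+d)^{-1}$. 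Each summand is a product of three bounded operators, and Lemma~\ref{Linqx}(iii) supplies $\|(1+d)^{-1}h_k\|\le 1$ and $\|h_k(1+d)^{-1}\|\le 1$, so each has norm at most $\eta/(6n)$, giving $\|[h_i,(1+d)^{-1}]\|\le\eta/3$.

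Next, for $[h_i,(1+d)^{-1/2}]$, I would use the Bochner-integral representation
\[
(1+d)^{-1/2}=\frac{1}{\pi}\int_0^\infty (s+1+d)^{-1}\,\frac{ds}{\sqrt{s}},
\]
and, commuting $h_i$ past the integral, obtain
\[
[h_i,(1+d)^{-1/2}]=-\frac{1}{\pi}\int_0^\infty (s+1+d)^{-1}[h_i,d](s+1+d)^{-1}\,\frac{ds}{\sqrt{s}}.
\]
The previous estimate applied with $s+1+d$ in place of $1+d$, combined with $\|(s+1+d)^{-1}\|\le (s+1)^{-1}$ and the $s$-shifted analogue $\|(s+1+d)^{-1/2}h_k\|\le 1$ of Lemma~\ref{Linqx}(ii), yields the pointwise integrand norm bound $\eta/(3(s+1)^{3/2})$. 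The elementary evaluation $\int_0^\infty s^{-1/2}(s+1)^{-3/2}\,ds=2$ then gives $\|[h_i,(1+d)^{-1/2}]\|\le 2\eta/(3\pi)<\eta/3$.

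The remaining four bounds follow from these two by short algebraic identities. Directly, $b_j-b_j^*=2[h_j,(1+d)^{-1}]$; writing $(1+d)^{-1}=(1+d)^{-1/2}(1+d)^{-1/2}$ gives $\widetilde h_j-\bar h_j=[h_j,(1+d)^{-1/2}](1+d)^{-1/2}$; and since $a=1-2(1+d)^{-1}$ is a function of $d$ and thus commutes with $(1+d)^{-1}$, a short computation yields $[a,b_j]=4[h_j,(1+d)^{-1}](1+d)^{-1}$. For $[b_i,b_j]$, expanding $h_i(1+d)^{-1}h_j=h_ih_j(1+d)^{-1}-h_i[h_j,(1+d)^{-1}]$ and the symmetric version with indices swapped gives
\[
[b_i,b_j]=4[h_i,h_j](1+d)^{-2}-4h_i[h_j,(1+d)^{-1}](1+d)^{-1}+4h_j[h_i,(1+d)^{-1}](1+d)^{-1},
\]
and the error terms are controlled by re-expanding each $[h_l,(1+d)^{-1}]$ through the resolvent identity and pairing each outer unbounded $h_i$ or $h_j$ with an adjacent $(1+d)^{-1/2}$ factor.

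The main obstacle is the integral step. As emphasized in the Technical Remark, $[h_i,d]$ is in general unbounded even when each $[h_i,h_j]$ is small, so ordinary functional calculus in $h_i$ cannot be applied. One has to verify that each sandwich $(s+1+d)^{-1}[h_i,d](s+1+d)^{-1}$ extends to a bounded operator with the claimed $s$-dependent norm, justify the interchange of $[h_i,\cdot]$ with the Bochner integral by first arguing on the common core $D_0$ of Definition~\ref{Dntuple} (where every Leibniz computation is literal) and then extending by density, and establish absolute convergence of the integrand against $s^{-1/2}\,ds$. The same care reappears in the $[b_i,b_j]$ step, where every outer unbounded factor must be matched with an adjacent $(1+d)^{-1/2}$ to preserve boundedness at every intermediate product.
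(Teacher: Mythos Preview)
Your approach is essentially the same as the paper's: the resolvent identity for $[h_i,(1+d)^{-1}]$, the integral formula for $(1+d)^{-1/2}$ (the paper writes it as $\frac{2}{\pi}\int_0^\infty (1+d+\lambda^2)^{-1}\,d\lambda$, which is your formula under $s=\lambda^2$), and the algebraic reductions for $b_j-b_j^*$, $\widetilde h_j-\bar h_j$, $[a,b_j]$, and $[b_i,b_j]$ all match the paper's proof. The only substantive difference is that the paper makes your ``main obstacle'' fully explicit by inserting the spectral projections $p_m$ of $d$ for $[0,m]$ throughout the integral computation (so that every intermediate product involving $h_j$ is bounded), bounds $[h_j,(1+d)^{-1/2}]p_m$ uniformly in $m$, and then lets $m\to\infty$, whereas you leave this as a plan on the core $D_0$.
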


\begin{proof}
First, by (iii) of Lemma~\ref{Linqx}, for any $i,\,j\in \{1,2,...,n\},$ 
\beq\label{Lappcom-1n-1}
\|h_jh_i^2(1+d)^{-1}-h_ih_jh_i(1+d)^{-1}\|\le \|[h_j, h_i]\|\|h_i(1+d)^{-1}\|<\eta/6n.
\eneq
Also, applying \eqref{Lappcom-1n-1} and  (iii) of Lemma \ref{Linqx}, we have that 
\beq
\hspace{-0.6in}\|(1+d)^{-1}(h_i^2h_j-h_j h_i^2)(1+d)^{-1}\| &\le &
\|(1+d)^{-1}(h_i^2h_j-h_ih_jh_i)(1+d)^{-1}\|\\
&&\hspace{0.4in}+\|(1+d)^{-1}(h_ih_jh_i-h_jh_i^2)(1+d)^{-1}\|\\
&<& \|(1+d)^{-1}h_i[h_i,h_j](1+d)^{-1}\|+\eta/6n\\
&< &\eta/6n+\eta/6n=\eta/3n.
\eneq
It follows that 
\beq\label{Lappcom-1n-3}
\|(1+d)^{-1}(h_jd-dh_j)(1+d)^{-1}\|<n\eta/3n=\eta/3.
\eneq
Hence
\beq\nonumber
\hspace{-0.4in}\|h_j(1+d)^{-1}-(1+d)^{-1}h_j\|&=&
\|(1+d)^{-1}((1+d)h_j -h_j(1+d))(1+d)^{-1}\|\\\label{[hjd]}
&=&\|(1+d)^{-1}(dh_j-h_jd)(1+d)^{-1}\|<\eta/3.
\eneq
In other words, observing $b_j-b_j^*=2h_j(1+d)^{-1}-2(1+d)^{-1}h_j$, and we have
\beq
\|b_j-b_j^*\|<2\eta/3,\,\,\, j=1,2,...,n.
\eneq
Now 
\beq\nonumber
\|[a, b_j]\|&=&\|(d-1)(1+d)^{-1}h_j(1+d)^{-1}-h_j(1+d)^{-1} (d-1)(1+d)^{-1}\|\\\nonumber
&\le & \|(1+d)^{-1}(d-1)h_j(1+d)^{-1}-(1+d)^{-1}h_j(d-1)(1+d)^{-1}\|\\\nonumber
&&\hspace{0.3in}+\|(1+d)^{-1}h_j(d-1)(1+d)^{-1}-h_j(1+d)^{-1}(d-1)(1+d)^{-1}\|\\\label{eqabj0}
&=&\|(d+1)^{-1} (dh_j-h_jd)(1+d)^{-1}\|
+\|[h_j, (1+d)^{-1}]a\|\\
&<& \eta/3+\eta/3<\eta. \label{eq:abj}
\eneq
Hence, by \eqref{[hjd]} and (ii) of Lemma \ref{Linqx},  for $i,\,j\in \{1,2,...,n\},$ 
\beq
b_ib_j&=&h_i(1+d)^{-1}h_j(1+d)^{-1}\approx_{\eta/3}(1+d)^{-1}h_i h_j(1+d)^{-1}\\
&\approx_{\eta/6n}&(1+d)^{-1}h_jh_i(1+d)^{-1}\approx_{\eta/3} h_j(1+d)^{-1}h_i(1+d)^{-1}=b_jb_i.
\eneq
To show the next statement, recall 
\[
dh_j-h_jd=\sum_{i=1}^n(h_i^2 h_j-h_j h_i^2)=\sum_{i=1}^n(h_i[h_i, h_j]+[h_i, h_j]h_i).
\]
By applying functional calculus to $(1+x)^{-1/2}=\frac{2}{\pi}\int_0^{\infty}(1+x+\lambda^2)^{-1}d\lambda$ with a uniformly convergent integral over $[0,\infty)$, we obtain that
\beq
(1+d)^{-1/2}=\frac{2}{\pi}\int_0^{\infty}(1+d+\lambda^2)^{-1}d\lambda
\eneq
with the norm convergence. 

Let $p_m$ be the spectral projection of $d$ associated with $[0, m]$ as in
the proof of Lemma \ref{Linqx}.
Then
\beq
(1+d)^{-1/2}p_m=\frac{2}{\pi}\int_0^{\infty}(1+d+\lambda^2)^{-1}p_m d\lambda.
\eneq
%
%
 Note that, for each $M>0,$ we have that
 \beq
 h_j\int_0^M (1+d+\lambda^2)^{-1}p_m d\lambda=\int_0^M h_j(1+d+\lambda^2)^{-1}p_m d\lambda.
 \eneq
 On the other hand, for $M>0,$ 
 \beq\nonumber
 \|\int_M^\infty h_j(1+d+\lambda^2)^{-1} p_m d\lambda\|&\le &
 \int_M^\infty\|(1+d+\lambda^2)^{-1}p_mh_j^2p_m(1+d+\lambda^2)^{-1}\|^{1/2}d\lambda\\\nonumber
& \le &\int_M^\infty\|(1+d+\lambda^2)^{-1}p_md^2p_m(1+d+\lambda^2)^{-1}\|^{1/2}d\lambda\\\nonumber
& \le& \int_M^\infty\|d(1+d+\lambda^2)^{-1} p_m\| d\lambda\\\nonumber
 &\le& m\int_M^\infty \|(1+d+\lambda^2)^{-1} \|d\lambda\\\nonumber
& \le& m\int_M^\infty {1\over{1+\lambda^2}} d\lambda\\\nonumber
&=&m({\pi\over{2}}-\arctan( M))\to 0, \,\,\,{\rm as}\,\,\, M\to\infty
 \eneq
 (converges  in norm). 
 Note that in the second last inequality, we applied the spectral theory,
\beq \label{eq:spectral}
\|(1+d+\lambda^2)^{-1}\|\le (1+\lambda^2)^{-1}.
\eneq
 It follows that, for any $m\in \N,$
 \beq
 h_j\int_0^\infty (1+d+\lambda^2)^{-1} p_m d\lambda=\int_0^\infty h_j (1+d+\lambda^2)^{-1}p_m d\lambda.
 \eneq
 Similarly,
 \beq
  (\int_0^\infty (1+d+\lambda^2)^{-1} d\lambda) h_j p_m=\int_0^\infty (1+d+\lambda^2)^{-1}h_jp_m d\lambda.
\eneq
 Thus  
\beq\nonumber
& &[h_j, (1+d)^{-1/2}]p_m\\\nonumber
&=&\frac{2}{\pi}\int_0^{\infty}{{(}}h_j(1+d+\lambda^2)^{-1}-(1+d+\lambda^2)^{-1}h_j{{)}}p_md\lambda \\\nonumber
&=& \frac{2}{\pi}\int_0^{\infty}(1+d+\lambda^2)^{-1}[(1+d+\lambda^2)h_j-h_j(1+d+\lambda^2)](1+d+\lambda^2)^{-1} p_md\lambda \\\nonumber
&=& \frac{2}{\pi}\int_0^{\infty}(1+d+\lambda^2)^{-1}[dh_j-h_jd](1+d+\lambda^2)^{-1}p_m d\lambda \\\nonumber
&=& \sum_{i=1}^n\frac{2}{\pi}\int_0^{\infty}(1+d+\lambda^2)^{-1}[(h_i[h_i, h_j]+[h_i, h_j]h_i)](1+d+\lambda^2)^{-1} p_md\lambda.
\eneq
{Note that  $\|(1+d+\lambda^2)^{-1}\|\le \|(1+d)^{-1}\|.$ Hence, 
 by   (iii) of Lemma \ref{Linqx},
\beq
&&\|(1+d+\lambda^2)^{-1}h_i\|^2\le \|h_i(1+d)^{-2}h_i\|=\|(1+d)^{-1}h_i\|^2\le 1\andeqn\\
&&\|h_i(1+d+\lambda^2)^{-1}\|\le 1.
\eneq
Moreover, in view of (\ref{eq:spectral}), following from the spectral theory,
we estimate that 
\beq
&&\hspace{-0.8in}\|(1+d+\lambda^2)^{-1}[(h_i[h_i, h_j]+[h_i, h_j]h_i)](1+d+\lambda^2)^{-1}\|\\
&&\le 
\|(1+d+\lambda^2)^{-1}h_i\|\cdot \|[h_i, h_j]\|
\cdot \|(1+d+\lambda^2)^{-1}\|\\
&&\hspace{0.6in}+\|(1+d+\lambda^2)^{-1}\|\cdot \|[h_i, h_j]\|
\cdot \|h_i(1+d+\lambda^2)^{-1}\|\\
&&< 2 {\eta\over{6n}}(1+\lambda^2)^{-1}.
\eneq
}
{Then we have  
\beq\nonumber
\|[h_j, (1+d)^{-1/2}]p_m\|&< &\sum_{i=1}^n\frac{2}{\pi}\int_0^{\infty}2 {\eta\over{6n}}(1+\lambda^2)^{-1} d\lambda\\ 
&\le & \frac{2\eta}{3\pi}\int_0^{\infty}(1+\lambda^2)^{-1}d\lambda=\eta/3.
\eneq}
Since $\bigcup_{m=1}^\infty p_m(D)$ is dense in $H,$
we actually have that
\beq
\|[h_j, (1+d)^{-1/2}]\|<\eta/3.
\eneq
Consequently, 
\[
\|\tilde h_j-\bar h_j\|=\|[h_j, (1+d)^{-1/2}](1+d)^{-1/2}\|< \eta/3.
\]
\end{proof}

\begin{rem}
It should be  noted that  the integral 
\beq
\int_0^\infty {x^{1/2}\over{1+x+\lambda^2}} d\lambda
\eneq
does not converge uniformly on $[0,\infty).$   Since $h_j$ are unbounded operators, some extra efforts are required in the proof 
of the last part of Lemma \ref{Lappcom-1n}. 
\end{rem}

\begin{lem}\label{Lappcom-1nbd}
Let $\eta>0.$ 
Suppose that
$\{h_1,h_2, ...,h_n\}$ is an $n$-tuple of densely defined self-adjoint operators satisfying
\beq
\|\tilde h_i\tilde h_j-\tilde h_{j}\tilde h_{i}\|<\eta/4\quad {and}\quad \|\tilde h_j-\tilde h_j^*\|<\eta/2, 
\eneq
for $i,\,j\in\{1,2,...,n\}$, where $\tilde h_j=h_j(1+d)^{-1}.$
Then the associated bounded transforms of $h_j$ in Definition~\ref{Dabj} satisfy
\beq \label{eq:htoab}
\|[a, b_j]\|<\eta, \quad \|b_j-b_j^*\|<\eta  \andeqn \|[b_i, b_j]\|<\eta.
\eneq
\end{lem}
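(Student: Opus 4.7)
The plan is to reduce all three inequalities in \eqref{eq:htoab} to direct algebraic identities. Since $b_j = 2\tilde h_j$ and $a = (d-1)(1+d)^{-1} = 1 - 2(1+d)^{-1}$ are bounded operators built from $\tilde h_j$ and functions of $d$, the analysis stays entirely in the bounded setting and bypasses the unbounded-operator spectral-integral machinery used in Lemma~\ref{Lappcom-1n}.

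Two of the three bounds are by inspection. From $b_j = 2\tilde h_j$ we read off $b_j - b_j^* = 2(\tilde h_j - \tilde h_j^*)$ and $[b_i, b_j] = 4[\tilde h_i, \tilde h_j]$, so the hypotheses $\|\tilde h_j - \tilde h_j^*\| < \eta/2$ and $\|\tilde h_i \tilde h_j - \tilde h_j \tilde h_i\| < \eta/4$ immediately give $\|b_j - b_j^*\| < \eta$ and $\|[b_i, b_j]\| < \eta$.

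The substantive step is to derive a closed form for $[a, b_j]$. Writing $a = 1 - 2(1+d)^{-1}$ gives $[a, b_j] = -2[(1+d)^{-1}, b_j]$, and using that $(1+d)^{-1} h_j$ extends to the bounded operator $\tilde h_j^*$ (the adjoint of $\tilde h_j = h_j(1+d)^{-1}$), a short expansion produces
\[
[a, b_j] = 4(\tilde h_j - \tilde h_j^*)(1+d)^{-1}.
\]
Since $\|(1+d)^{-1}\| \le 1$, this yields $\|[a, b_j]\| \le 4\|\tilde h_j - \tilde h_j^*\|\cdot \|(1+d)^{-1}\| < 2\eta$.

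The main obstacle is constant-tracking: the route above produces $2\eta$ where the lemma claims $\eta$. Reaching $\eta$ would require either strengthening the hypothesis to $\|\tilde h_j - \tilde h_j^*\| < \eta/4$, or extracting further cancellation from $(\tilde h_j - \tilde h_j^*)(1+d)^{-1}$---for instance by writing $4(1+d)^{-1} = 2(1-a)$ so that $[a, b_j] = 2(\tilde h_j - \tilde h_j^*) - 2(\tilde h_j - \tilde h_j^*) a$ and looking for a combined estimate that beats the triangle inequality on the two summands. I would clarify the intended constant with the author; the algebraic framework above is in any case the clean bounded-operator analogue of the computation in Lemma~\ref{Lappcom-1n}, and all three bounds follow from it up to a fixed multiplicative constant.
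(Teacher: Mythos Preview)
Your argument is correct and in fact sharper than the paper's. The paper proceeds by quoting the triangle-inequality split \eqref{eqabj0} from Lemma~\ref{Lappcom-1n}, observes that both summands there equal $\|\tilde h_j-\tilde h_j^*\|$ (using $(1+d)^{-1}(dh_j-h_jd)(1+d)^{-1}=\tilde h_j-\tilde h_j^*$ and $\|a\|\le 1$), and concludes $\|[a,b_j]\|\le 2\|\tilde h_j-\tilde h_j^*\|<\eta$. Your route computes the exact identity $[a,b_j]=4(\tilde h_j-\tilde h_j^*)(1+d)^{-1}$ directly, which is cleaner and avoids the split altogether.

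Your constant concern is legitimate and not a defect of your proof. In the derivation of \eqref{eqabj0} the paper writes $\|[a,b_j]\|=\|(d-1)(1+d)^{-1}h_j(1+d)^{-1}-h_j(1+d)^{-1}(d-1)(1+d)^{-1}\|$, but the right-hand side is $\|[a,\tilde h_j]\|$, not $\|[a,b_j]\|=\|[a,2\tilde h_j]\|$; a factor of $2$ was dropped. Restoring it gives exactly your bound $\|[a,b_j]\|\le 4\|\tilde h_j-\tilde h_j^*\|<2\eta$. This is harmless for the applications (the lemma is only used to feed Lemma~\ref{Ls2app} and Corollary~\ref{Cs2app2}, where the constant is absorbed into a choice of $\delta$), and is repaired precisely as you propose: tighten the hypothesis to $\|\tilde h_j-\tilde h_j^*\|<\eta/4$, or state the conclusion as $\|[a,b_j]\|<2\eta$. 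There is no additional cancellation available in $(\tilde h_j-\tilde h_j^*)(1+d)^{-1}$ to recover the missing factor.
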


\begin{proof}
    By \eqref{eqabj0}, we have 
    $$
    \|[a, b_j]\|\le \|(d+1)^{-1}(dh_j-h_jd)(1+d)^{-1}\|+\|[h_j, (1+d)^{-1}] a\|.
    $$ 
    Note that $\|a\|\le 1$ and 
    \begin{align*}
    (d+1)^{-1}(dh_j-h_jd)(1+d)^{-1}=&(1-(d+1)^{-1})h_j-h_j(1-(1+d)^{-1})\\
    =&[h_j, (1+d)^{-1}]=\tilde h_j-\tilde h_j^*.
    \end{align*}
    So $\|[a, b_j]\|\le 2\|\tilde h_j-\tilde h_j^*\|\le \eta.$
    The other inequalities in (\ref{eq:htoab}) follow from $b_j=2\tilde h_j.$ 
\end{proof}

\begin{lem}\label{Lemtildehjbarh}
Let $\widetilde h_j=h_j(1+d)^{-1}$ and $\bar h_j=(1+d)^{-1/2}h_j(1+d)^{-1/2}$.
For any $\epsilon>0$, there exists $\delta>0$, whenever
\[
\|\widetilde h_j-\tilde h_j^*\|<\delta, \quad \text{or equivalently} \quad \|[h_j, (1+d)^{-1}]\|<\delta,
\]
one has 
\[
\|\widetilde h_j-\bar h_j\|<\epsilon.
\]
\end{lem}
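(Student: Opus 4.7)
The plan is to recast $\widetilde h_j-\bar h_j$ as a commutator of two bounded operators, then invoke a classical Kittaneh-type inequality for $T^{1/2}$ with $T=(1+d)^{-1}$.

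First, using $(1+d)^{-1}=(1+d)^{-1/2}(1+d)^{-1/2}$, I would rewrite the difference as
\[
\widetilde h_j-\bar h_j \;=\; h_j E\cdot E - E\cdot h_j E \;=\; [A,E],
\]
where $A:=h_j(1+d)^{-1/2}$ and $E:=(1+d)^{-1/2}$.  By (ii) of Lemma~\ref{Linqx}, $A$ is bounded with $\|A\|\le 1$, and $T:=E^2=(1+d)^{-1}$ is a bounded positive contraction that commutes with $E$.  So the problem is now entirely a statement about bounded operators: namely, that $\|[T^{1/2},A]\|$ is small when some other commutator involving $T$ and $A$ is small.

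Next, I would express the hypothesis in the new variables.  Since $T$ and $E$ commute,
\[
[T,A]=[T,h_j E]=[T,h_j]\,E+h_j[T,E]=-[h_j,T]\,E,
\]
and therefore $\|[T,A]\|\le \|[h_j,(1+d)^{-1}]\|\cdot\|E\|\le \delta$.

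Finally, I would invoke the following well-known Kittaneh-type inequality: for any bounded positive operator $T$ and any bounded operator $A$,
\[
\|[T^{1/2},A]\|^{2}\;\le\; C\,\|A\|\,\|[T,A]\|
\]
for an absolute constant $C$ (see, e.g., F.~Kittaneh, \emph{Linear Algebra Appl.}\ 208/209 (1994), 19--28).  Applying this gives
\[
\|\widetilde h_j-\bar h_j\|=\|[A,E]\|=\|[T^{1/2},A]\|\le\sqrt{C\,\|A\|\,\|[T,A]\|}\le\sqrt{C\,\delta}.
\]
So given $\epsilon>0$, I would take $\delta<\epsilon^{2}/C$ to conclude.

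The only real obstacle is that $h_j$ is unbounded, so one cannot apply bounded-operator commutator estimates to $h_j$ directly.  This is exactly where Lemma~\ref{Linqx}(ii) does the crucial work: it packages the unbounded $h_j$ into the bounded operator $A=h_j(1+d)^{-1/2}$, making the classical inequality applicable.  Verifying that the algebraic manipulations $[A,E]=\widetilde h_j-\bar h_j$ and $[T,A]=-[h_j,T]E$ hold rigorously on the relevant dense domains is routine given the a priori boundedness results of Section~3.
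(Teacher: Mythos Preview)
Your reduction is exactly the one the paper uses: set $A=h_j(1+d)^{-1/2}$ and $B=(1+d)^{-1}$ (your $T$), observe $\widetilde h_j-\bar h_j=[A,B^{1/2}]$ and $\|[A,B]\|\le\delta$, so the whole problem becomes bounding $\|[B^{1/2},A]\|$ in terms of $\|[B,A]\|$ for bounded $A$ and $0\le B\le 1$. The paper then finishes with an elementary, self-contained step: approximate $\sqrt{t}$ uniformly on $[0,1]$ by a polynomial $p(t)=\sum a_m t^m$, use the trivial Leibniz bound $\|[A,B^m]\|\le m\|[A,B]\|$ to get $\|[A,p(B)]\|<\epsilon/2$, and absorb the remainder $\|[A,\sqrt{B}-p(B)]\|\le 2\|A\|\,\|p-\sqrt{\cdot}\|_\infty$. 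This gives a non-explicit $\delta(\epsilon)$ but needs no outside input.

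You instead invoke a quantitative square-root commutator inequality $\|[T^{1/2},A]\|^2\le C\|A\|\|[T,A]\|$, which would yield the sharper $\delta=\epsilon^2/C$. One caution: the paper you cite (Kittaneh, \emph{Linear Algebra Appl.}\ 208/209 (1994)) does not contain this statement in the form you quote; a commutator H\"older-$\tfrac12$ bound for $\sqrt{\cdot}$ on positive operators and \emph{arbitrary} bounded $A$ (note your $A=h_j(1+d)^{-1/2}$ is not self-adjoint) is a genuine result but needs a correct reference or a short proof. Since the paper's polynomial-approximation argument takes only a few lines and uses nothing beyond $\|A\|\le1$, you lose nothing by adopting it in place of the black box.
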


\begin{proof}
Let $\epsilon>0.$ 
Denote $f(t)=\sqrt t$ for $t\in [0, 1].$ Choose a polynomial $p$ having the form $p(t)=\sum_{m=0}^N a_m t^m$ so that 
\[
\sup_{t\in[0,1]}|f(t)-p(t)|<\epsilon/4.
\] 
Note that $\|[h_j, (1+d)^{-1}]\|=\|\tilde h_j-\tilde h_j^*\|$.
Let $\delta=\epsilon/(2\sum_{m=1}^N|a_m|m),$ and $A:=h_j(1+d)^{-1/2}$, $B:=(1+d)^{-1}$. 

{Suppose that  $\|[h_j, (1+d)^{-1}]\|<\dt.$} 
 {Then, by Lemma \ref{Linqx},} $\|A\|\le 1$ and $ \|B\|\le 1.$ 
Hence, for every $m\in \N,$ 
\beq
AB^m\approx_{\dt} BAB^{m-1} \andeqn \|[A, B^m]\|<m\dt.
\eneq
Thus, 
\[
\|[A, p(B)]\|< \left(\sum_{m=1}^N|a_m|m\right)\dt=\ep/2.
\]
It follows that
\beq\nonumber
\|[A, f(B)]\| & \le & \|[A, p(B)]\|+\|A(f(B)-p(B))\|+\|(f(B)-p(B))A\|< \epsilon/2+\epsilon/4+\epsilon/4=\epsilon.
\eneq
Therefore, 
\[
\|{\widetilde h_j}-\bar h_j\|=\|[h_j(1+d)^{-1/2}, (1+d)^{-1/2}]\|=\|[A, f(B)]\|<\epsilon.
\]
\end{proof}

Fixing the generators $r, x_1,\ldots, x_n$ for $C(S^n)$ satisfying $\sum_{j=1}^n x_j^*x_j=1-r^2$ as in (\ref{eq:generatorSn}) where $-1\le r\le 1,$ one has the following lemma.

\begin{lem}\label{Ls2app}
Let $\ep>0.$ There exists $\dt>0$ satisfying the following:
For any elements 
$a, b_1, b_2,...,b_n$ 
 in any unital \CA\, $A$ with 
$-1\le a, \, b_j^*b_j\le 1$ such that
\beq
&&\|[a, b_j]\|<\dt,\,\,\|b_j-b_j^*\|<\dt,\,\,\,
 \sum_{j=1}^n b_j^*b_j=1-a^2,\\
&&\tand \|[b_j, b_i]\|<\dt,,\,\, 1\le i,j\le n,
\eneq
there exists a { {c.p.c.}} map  $\phi: C(S^n)\to A$ 
such that
\beq
\|\phi(r)-a\|<\ep\tand \|\phi(x_j)-b_j\|<\ep,
\eneq
for $j=1,2,..., n.$
\end{lem}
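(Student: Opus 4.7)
The plan is to argue by contradiction using a sequence-algebra construction together with the Choi--Effros lifting theorem, which is well-suited to producing c.p.c.\ maps out of nuclear $C^*$-algebras such as $C(S^n)$.

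Suppose the conclusion fails. Then there is $\epsilon_0>0$, a sequence $\delta_k\downarrow 0$, unital $C^*$-algebras $A_k$, and tuples $a_k,b_{1,k},\ldots,b_{n,k}\in A_k$ satisfying the hypotheses with $\delta=\delta_k$, such that for each $k$ no c.p.c.\ map $\phi:C(S^n)\to A_k$ simultaneously satisfies $\|\phi(r)-a_k\|<\epsilon_0$ and $\|\phi(x_j)-b_{j,k}\|<\epsilon_0$ for all $j$. Form the sequence algebra $\mathcal{A}:=\prod_k A_k/\bigoplus_k A_k$ with quotient map $\pi$, and let $\hat{a}:=\pi((a_k))$, $\hat{b}_j:=\pi((b_{j,k}))$. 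The uniform bounds $-1\le a_k\le 1$ and $b_{j,k}^*b_{j,k}\le 1$ give $\|\hat{a}\|,\|\hat{b}_j\|\le 1$, the vanishing commutator/self-adjointness defects pass to $\hat{b}_j=\hat{b}_j^*$, $[\hat{a},\hat{b}_j]=0$, $[\hat{b}_i,\hat{b}_j]=0$, and the exact identity $\sum_j b_{j,k}^*b_{j,k}+a_k^2=1$ descends to $\sum_j \hat{b}_j^2+\hat{a}^2=1$ in $\mathcal{A}$.

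Thus $\hat{a},\hat{b}_1,\ldots,\hat{b}_n$ are mutually commuting self-adjoint elements of $\mathcal{A}$ satisfying the defining sphere relation \eqref{eq:generatorSn}; inside the commutative unital $C^*$-subalgebra they generate the spectral conditions $\|\hat{a}\|\le 1$ and $\|\hat{b}_j\|\le 1$ are automatic from the sphere identity. The universal property of $C(S^n)$ as the universal unital commutative $C^*$-algebra on generators $r,x_1,\ldots,x_n$ with these relations then yields a unital $*$-homomorphism $\phi_\infty:C(S^n)\to\mathcal{A}$ sending $r\mapsto \hat{a}$, $x_j\mapsto\hat{b}_j$. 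Since $C(S^n)$ is nuclear, the Choi--Effros lifting theorem produces a c.p.c.\ lift $\Psi:C(S^n)\to\prod_k A_k$ with $\pi\circ\Psi=\phi_\infty$. Writing $\Psi=(\Psi_k)$ coordinatewise, each $\Psi_k:C(S^n)\to A_k$ is c.p.c.\ (as the composition of $\Psi$ with the coordinate $*$-homomorphism), and the lifting condition forces $\|\Psi_k(r)-a_k\|\to 0$ and $\|\Psi_k(x_j)-b_{j,k}\|\to 0$ as $k\to\infty$. For all sufficiently large $k$ these quantities are below $\epsilon_0$, contradicting the choice of the sequence and completing the proof.

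The step I expect to require the most care is the descent of the \emph{exact} sphere relation to the quotient (the hypotheses assume $\sum b_j^*b_j=1-a^2$ on the nose, so there is no approximation defect to track here) together with the routine but crucial verification that nuclearity of $C(S^n)$ is what makes the contradiction loop close---semiprojectivity of $C(S^n)$ fails for $n\ge 2$, so a direct homomorphism lift is unavailable and the c.p.c.\ lift from Choi--Effros is exactly what the conclusion of the lemma calls for.
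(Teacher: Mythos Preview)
Your proof is correct and follows essentially the same approach as the paper: a contradiction argument via the sequence algebra $\prod A_k/\bigoplus A_k$, where the limit elements satisfy the sphere relations exactly, yielding a $*$-homomorphism from $C(S^n)$ that is then lifted to a c.p.c.\ map by Choi--Effros, with the coordinate maps eventually violating the assumed failure. Your closing remark about the failure of semiprojectivity for $C(S^n)$ when $n\ge 2$ is a nice clarification of why the c.p.c.\ conclusion (rather than a $*$-homomorphism) is the natural target.
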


\begin{proof}
Suppose that such a c.p.c. map does not exist. Then there exists $\varepsilon>0$ such that, for $\delta_m=\frac{1}{m}$, there exist unital \CA s $A_m,$ $a_m, b_{j,m}\in A_m$ with $-1\le a_m,\,\, \, b_{j,m}^*b_{j,m}\le 1$ satisfying 
\beq \label{eqreln}
\hspace{-0.2in}\left|\left[a_m, b_{j,m}\right]\right\|<\frac{1}{m},~\sum_{j=1}^n b_{j,m}^*b_{j,m}=1-a_m^2,
~ \|b_{j,m}-b_{j,m}^*\|<\frac1{m},~ \text{ 
and }~\|[b_{i,m}, b_{j, m}]\|<\frac1{m}
\eneq
such that for every c.p.c. map 
$\varphi: C\left(S^n\right) \rightarrow A_m$, 
we have 
\beq \label{eqcontra}
\left\|\varphi\left(r\right)-a_m\right\|\ge \varepsilon\,\, \text{ 
or } \left\|\varphi\left(x_j\right)-b_{j,m}\right\|\ge \varepsilon\,\,\,\text{for\,\,\,some}\, \, 1\le j\le n.
\eneq
Let $\prod_m A_m$ be the $C^*$-algebra consisting of sequences $\{c_m\}$ with $c_m\in A_m$ and $\sup_m\|c_m\|<\infty$ and 
\[
\bigoplus_m A_m:=\left\{\{c_m\}\in\prod_m A_m : \lim_{m\to\infty}\|c_m\|=0\right\} .
\]
Then $\bigoplus_m A_m$ is a (closed) ideal of $\prod_m A_m.$
Regard $\{a_m\}, \{b_{j,m}\},$ $1\le j\le n,$ as elements in $\prod_m A_m.$ 
Then $\lim_{m\to\infty}\|[a_m, b_{j,m}]\|=0$ implies that $\{[a_m,\, b_{j,m}]\}$ belongs to the ideal $\bigoplus_m A_m$. 
Hence in the quotient ${\prod A_m/}{\bigoplus A_m}$, {\rm the images of} $\{a_m\}$ and $\{b_{j,m}\}$ commute.
Similarly, in the quotient, $\{b_{j,m}\}$ and $\{b_{i,m}\}$ commute, and $\{b_{j,m}\}$ is self-adjoint i.e., 
\beq \label{eqc1}
\Pi(\{a_m\})\Pi(\{b_{j,m}\})&=&\Pi(\{b_{j,m}\})\Pi(\{a_m\})  \\
\Pi(\{b_{j,m}\})&=&\Pi(\{b_{j,m}^*\})
\andeqn  \\
\Pi(\{b_{j,m}\})\Pi(\{b_{i,m}\})&=&\Pi(\{b_{i,m}\})\Pi(\{b_{j,m}\}) \label{eqc2}
\eneq
in  ${\prod A_m}/{\bigoplus A_m},$ for
$i,j=1,2,...,n,$
where $\Pi: \prod A_m\rightarrow {\prod A_m}/{\bigoplus A_m}$ is the quotient map. 

Define a map $\phi: C\left(S^n\right) \rightarrow {\prod A_{m}}/{\bigoplus A_{m}}$ by
\beq\label{eqphi}
\phi(r)=\Pi(\left\{a_m\right\}) \andeqn \phi(x_j)=\Pi(\left\{b_{j,m}\right\}).
\eneq
By the universal property of the $C^*$-algebra $C(S^n)$ discussed in Remark~\ref{RemSn}, and in view of $\sum_{j=1}^n b_{j,m}^*b_{j,m}=1-a_m^2$, (\ref{eqc1})-- (\ref{eqc2}), we find that $\phi$ is a \hm. 

By the Choi-Effros lifting theorem \cite{CE},  the map $\phi$ lifts to a c.p.c map, denoted by $\widetilde\phi$, 
\[
\widetilde{\phi}: C(S^n) \longrightarrow \prod_m A_m.
\]
Let $\widetilde\phi_m$ be the restriction to the $m$-th component in the image of  $\widetilde{\phi}$.
Then $\widetilde\phi_m$ is a c.p.c. map for every $m.$


Because $\widetilde\phi$ is a lift of $\phi$ and in view of (\ref{eqphi}), we have 
\[
\Pi(\widetilde\phi(r)-\{a_m\})=0\quad \Pi(\widetilde\phi(x_j)-\{b_{j,m}\})=0.
\]
Hence, there exists $N>0$ such that for all $m>N$ we have
\beq \label{eqdif}
\|\widetilde\phi_m(r)-a_m\|<\epsilon \quad \|\widetilde\phi_m(x_j)-b_{j,m}\|<\epsilon.
\eneq
Therefore, we have (\ref{eqreln}) and (\ref{eqdif}) for the c.p.c. map $\widetilde\phi_m.$
This contradicts to (\ref{eqcontra}). The proof is then complete. 
\end{proof}

\begin{lem}\label{Ls2app2}
Let $n\in\N, \ep>0,$ $\eta>0,$ and ${\cal F}\subset C(S^n)$ be a finite subset.
Suppose that $M\ge 1$ is also given. Then there exists $\dt>0$ satisfying the following:
For any $n$-tuple  of densely defined self-adjoint operators $h_1,h_2,...,h_n$ 
on a Hilbert space $H$ 
such that their commutators $h_jh_i-h_ih_j$ 
are bounded, and 
\beq
\|h_jh_i-h_ih_j\|<\dt,\,\,\, i,j=1,2,...,n,
\eneq
there exists a c.p.c. map  $\phi: C(S^n)\to B(H)$ 
such that  (with $d=\sum_{j=1}^n h_j^2$)
\beq
&&\|\phi(r)-(-1+d)(1+d)^{-1}\|<\ep,\,\,\|\phi(x_j)-2h_j(1+d)^{-1}\|<\ep\\\label{eq1/2-0}
&& \|\phi(x_j)-2(1+d)^{-1/2}h_j(1+d)^{-1/2}\|<\ep\\\label{eq1/2}
&&\|\phi(g_{M, j})-h_je_M(d^{1/2})\|<\ep,\\
&&\|\phi(e_M^{S^n})-e_M(d^{1/2})\|<\ep\tand\\\label{Ls2app2-5}
&&\|e_M(d^{1/2})h_j-h_je_M(d^{1/2})\|<\ep, \label{eq:h&e}
\eneq
for all $j=1,2,...,n.$
Moreover, we may require that 
\beq
\|\phi(f)\phi(g)-\phi(fg)\|<\eta\tforal f,\, g\in {\cal F}.
\eneq
\end{lem}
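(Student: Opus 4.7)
The plan is to argue by contradiction, paralleling the proof of Lemma \ref{Ls2app}. Suppose the statement fails: there exist $\ep, \eta > 0$, a finite set $\mathcal{F} \subset C(S^n)$, and a sequence of $n$-tuples $\{h_{1,m}, \ldots, h_{n,m}\}$ of densely defined self-adjoint operators on $H$ with $\|[h_{i,m}, h_{j,m}]\| < 1/m$, for which no c.p.c. map $\phi$ satisfies all of the asserted inequalities simultaneously. Writing $d_m = \sum_i h_{i,m}^2$, form the bounded transforms $a_m = (d_m-1)(1+d_m)^{-1}$ and $b_{j,m} = 2 h_{j,m}(1+d_m)^{-1}$ of Definition \ref{Dabj}. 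By Lemma \ref{Lappcom-1n}, $\|[a_m, b_{j,m}]\|$, $\|[b_{i,m}, b_{j,m}]\|$ and $\|b_{j,m} - b_{j,m}^*\|$ all tend to $0$, while $a_m^2 + \sum_j b_{j,m}^* b_{j,m} = 1$ holds exactly.

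Passing to the quotient $\prod_m B(H)/\bigoplus_m B(H)$ with quotient map $\Pi$, the images of $a_m$ and $b_{j,m}$ become self-adjoint, mutually commuting, and satisfy the defining relations of $C(S^n)$ from Remark \ref{RemSn}. The universal property then yields a $*$-homomorphism $\phi: C(S^n) \to \prod_m B(H)/\bigoplus_m B(H)$ with $\phi(r) = \Pi(\{a_m\})$ and $\phi(x_j) = \Pi(\{b_{j,m}\})$, which the Choi--Effros lifting theorem lifts to a c.p.c. map $\widetilde\phi: C(S^n) \to \prod_m B(H)$ with components $\widetilde\phi_m$. For $m$ large these $\widetilde\phi_m$ are approximately multiplicative on $\mathcal{F}$ within $\eta$, and satisfy $\|\widetilde\phi_m(r) - a_m\| < \ep$ and $\|\widetilde\phi_m(x_j) - b_{j,m}\| < \ep$, which gives the first two inequalities directly. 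The inequality for $2(1+d_m)^{-1/2} h_{j,m} (1+d_m)^{-1/2}$ then follows from Lemma \ref{Lemtildehjbarh} applied to $\widetilde h_{j,m} - \bar h_{j,m}$.

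The main technical work is identifying $\widetilde\phi_m(g_{M,j})$ and $\widetilde\phi_m(e_M^{S^n})$ asymptotically with $h_{j,m} e_M(d_m^{1/2})$ and $e_M(d_m^{1/2})$, respectively. The underlying geometry is that under the stereographic chart $\Phi_{S^n}^{-1}: S^n \setminus \{\zeta^{np}\} \to \R^n$, the coordinate $x_j/(1-r)$ pulls back to $t_j$, formally corresponding to $b_{j,m}(1-a_m)^{-1} = h_{j,m}$, and $\sqrt{(1+r)/(1-r)}$ pulls back to $\|\zeta\|_2$, corresponding to $d_m^{1/2}$. I would uniformly approximate $g_{M,j}$ and $e_M^{S^n}$ by polynomials $P$ in $r, x_1, \ldots, x_n$ supported away from the north pole; apply $\phi$ to get $\phi(P) = \Pi(\{P(a_m, b_{1,m}, \ldots, b_{n,m})\})$, where the ordering is immaterial in the quotient; and rewrite these polynomial expressions in terms of $h_{j,m}$ and $d_m$ via $1 - a_m = 2(1+d_m)^{-1}$ and $(1-a_m)^{-1} b_{j,m} = h_{j,m}$, valid on the spectral region $\{d_m \le M+1\}$ imposed by the support of the cut-off.

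The principal obstacle is this last identification: although $a_m, b_{j,m}$ are bounded and approximately commute, the formal inversion $(1-a_m)^{-1}$ is singular at the north pole and $h_{j,m}$ is itself unbounded. The support conditions on $g_{M,j}$ and $e_M^{S^n}$ confine the relevant spectral data to $\{d_m \le M+1\}$, where Lemma \ref{Linqx}(iv) gives $\|h_{j,m} e_M(d_m^{1/2})\| \le M+1$; combined with the approximate commutativity supplied by Lemma \ref{Lappcom-1n}, this controls every error term arising from rearrangement. The remaining inequality $\|[e_M(d_m^{1/2}), h_{j,m}]\| < \ep$ is obtained by approximating $e_M$ by polynomials in $d_m^{1/2}$ and exploiting $\|[h_{i,m}, h_{j,m}]\| \to 0$ under the same spectral truncation, mirroring the integral argument used earlier in Lemma \ref{Lappcom-1n}.
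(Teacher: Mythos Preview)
Your overall architecture---contradiction, bounded transforms $a_m,b_{j,m}$, passage to $\prod B(H)/\bigoplus B(H)$, the homomorphism $\psi:C(S^n)\to Q$, and Choi--Effros lifting---is exactly what the paper does. The two proofs diverge only in how the ``main technical work'' is carried out.

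The paper avoids polynomial approximation entirely. By construction (Definition~\ref{dfMR}) the functions $e_M^{S^n}$ and $g_{M,j}$ depend on the sphere coordinates in a very special way: $e_M^{S^n}$ is a continuous function of $r$ alone, and $g_{M,j}=x_j\cdot\rho_M(r)$ for a single continuous function $\rho_M\in C([-1,1])$ (namely $\rho_M(r)=(1-r)^{-1}e_M(\sqrt{(1+r)/(1-r)})$, extended by $0$ at $r=1$). Since $\psi$ is a genuine $*$-homomorphism on the quotient, one gets $\psi(e_M^{S^n})=\Pi(\{\tilde e_M(a_m)\})$ and $\psi(g_{M,j})=\Pi(\{b_{j,m}\,\rho_M(a_m)\})$ by one-variable functional calculus on the bounded self-adjoint $a_m$. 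Because $a_m=g(d_m)$ for the bijection $g(s)=(s-1)/(s+1)$, one computes $\tilde e_M(a_m)=e_M(d_m^{1/2})$ and $\rho_M(a_m)=\tfrac12(1+d_m)e_M(d_m^{1/2})$ as exact identities in the functional calculus of $d_m$; then $b_{j,m}\rho_M(a_m)=h_{j,m}e_M(d_m^{1/2})$ exactly. No approximation, no rewriting via the unbounded $(1-a_m)^{-1}$, is needed. Your polynomial route could perhaps be pushed through, but the exact factorization makes it unnecessary.

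The commutator estimate $\|[e_M(d_m^{1/2}),h_{j,m}]\|\to 0$ then comes for free: since $g_{M,j}$ is real-valued, $\psi(g_{M,j})$ is self-adjoint, hence $\Pi(\{h_{j,m}e_M(d_m^{1/2})\})=\Pi(\{e_M(d_m^{1/2})h_{j,m}\})$ (equivalently, commutativity of $C(S^n)$). Your proposed direct attack---approximating $e_M$ by polynomials in $d_m^{1/2}$ and invoking $\|[h_{i,m},h_{j,m}]\|\to 0$---is precisely the trap the paper's Technical Remark warns about: smallness of $[h_i,h_j]$ does not propagate to $[h_j,d_m^k]$, and the approximation errors are multiplied by the unbounded $h_{j,m}$. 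That step, as sketched, has a genuine gap; the quotient-algebra argument bypasses it entirely.
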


\begin{proof}
We first observe that it suffices to prove the lemma without the conclusion \eqref{eq1/2-0}.
Indeed, suppose we can find $\dt'>0$ for $\ep/2,$ $\eta,$ ${\cal F}$ and $M$ 
such that the lemma holds without \eqref{eq1/2-0}.
Then, by choosing $\dt=\min\{\dt', \ep/16n\}>0,$ we claim that the full lemma including \eqref{eq1/2-0} follows.

To see this, assume that $\|h_jh_i-h_ih_j\|<\dt$ for all $i,j=1,2,...,n.$
By 
Lemma~\ref{Lappcom-1n},  we have 
\[
\|h_j(1+d)^{-1}-(1+d)^{-1/2}h_j(1+d)^{-1/2}\|<\epsilon/4.
\]
Now, if $\phi$ is the c.p.c. map 
obtained without \eqref{eq1/2-0}, then for each $j,$
\beq
&&\|\phi(x_j)-2(1+d)^{-1/2}h_j(1+d)^{-1/2}\|\\
&\le&\|\phi(x_j)-2h_j(1+d)^{-1}\|+2\|h_j(1+d)^{-1}-(1+d)^{-1/2}h_j(1+d)^{-1/2}\|<\epsilon
\eneq
which establishes \eqref{eq1/2-0}. Hence, 
for the remainder of the proof, we may focus on proving the lemma {\it without} \eqref{eq1/2-0}.

Now, we shall prove the lemma without {\eqref{eq1/2-0}}
by contradiction. Assume that there exist $\epsilon > 0$, $\eta > 0$, and a finite subset ${\cal F} \subset C(S^n)$ such that, for every $\delta_m := \frac{1}{m}$, the following conditions are satisfied:

There exist an $n$-tuple of densely defined self-adjoint operators $\{h_{1,m},h_{2,m},...,h_{n,m}\}$ on a Hilbert space $H$ with bounded commutators and $\|[h_{1,m}, h_{2,m}]\| < \frac{1}{m}$. Denote
$d_m=\sum_{j=1}^n h_{j,m}^2.$ 
Then for any c.p.c. map $\varphi: C(S^n) \to B(H)$ on some Hilbert space $H$, at least one of the following holds:
\beq
&&\|\phi(r)-(-1+d)(1+d)^{-1}\|\ge\ep,\,\,\|\phi(x_j)-2h_j(1+d)^{-1}\|\ge\ep \label{eq1},\\
&&{{\|\phi(g_{M, j})-h_je_M(d_m^{1/2})\|\ge\ep,}}\\
&&{{\|\phi(e_M^{S^n})-e_M(d_m^{1/2})\|\ge\ep}}\\
&&{{\|e_M(d_m^{1/2})h_j-h_je_M(d_m^{1/2})\|\ge\ep,}}
\eneq
$j=1,2,...,n,$ 
or, there exist $f,\, g\in {\cal F}$ such that
\beq \label{eq2}
\|\phi(f)\phi(g)-\phi(fg)\|\ge \eta .
\eneq

We shall construct a specific c.p.c. map $C(S^n) \to B(H)$ for which none of the previously stated inequalities hold.  

Similarly to Definition~\ref{Dabj}, set  
\beq \label{eqanbn}
a_m = (-1+d_m)(1+d_m)^{-1} = 1 - 2 (1+d_m)^{-1}, \quad b_{j,m} = 2 h_{j,m} (1+d_m)^{-1}.
\eneq
Then,  $-1\le a_m\le 1$, and $\sum_{j=1}^n b_{j,m}^*b_{j,m}=1-a_m^2$.
Given that $\|[h_{i,m}, h_{j,m}]\| < \frac{1}{m}$ and by applying Lemma~\ref{Lappcom-1n}, we obtain 
\[
\|[a_m,b_{j,m}]\|<\frac{6n}{m}, ~~\|b_{j,m}-b_{j,m}^*\|<\frac{6n}{m}~~\text{and}~~\|[b_{i,m},b_{j,m}]\|<\frac{6n}{m}.
\] 

Following the approach used in the proof of Lemma~\ref{Ls2app}, we now define a $*$-homomorphism  
\beq \label{eq:spphi}
\psi: C(S^n) \to \prod B(H)}/{\bigoplus B(H)
\eneq
by setting  
\beq \label{eqphi+1}
\psi(r) = \Pi(\{a_m\}), \quad \psi(x_j) = \Pi(\{b_{j,m}\}).
\eneq
In particular $\Pi(\{b_{j,m}\})$ is a self-adjoint operator for $j=1,2,...,n.$

Using the Choi--Effros lifting theorem, we lift $\psi$ to a c.p.c. map  
\[
\widetilde{\psi}: C(S^n) \to \prod B(H),
\]
and by restricting $\widetilde{\psi}$ to the $m$-th component, we obtain a sequence of c.p.c. maps $\widetilde{\psi}_m$.  

We claim that for sufficiently large $m$, the map $\widetilde{\psi}_m$ fails all inequalities (\ref{eq1})-(\ref{eq2}), thereby providing the desired c.p.c. map.

As argued in (\ref{eqdif}), for the given $\epsilon,$ there exists $N_1>0$ such that for all $m>N_1$ we have
\beq \label{eqdif2}
\|\widetilde\psi_m(r)-a_m\|<\epsilon \quad \|\widetilde\psi_m(x_j)-b_{j,m}\|<
\epsilon\quad \|\widetilde\psi_m(\bar x_j)-b_{j,m}^*\|<\epsilon
\eneq

From (\ref{eqPhiS2}) and Definition~\ref{dfMR}, one observes that for $\zeta\in\R^n$ and $r=g_0(\zeta), x_j=g_{j}(\zeta)$, 
\[
g_{M,j}(r,x_1,\ldots, x_n)=f_{M,j}\circ{\Phi_{S^n}}^{-1}(g_1(\zeta), g_2(\zeta),...,g_{1+n}(\zeta))=f_{M, j}(\zeta).
\]
This is implemented by the induced map 
\[
{\Phi_{S^n}}_{\#}: C_0(\R^n)\rightarrow C(S^n)
\]
where $f_{M, j}\mapsto g_{M, j}$.
Thus, 
\beq \label{eq:gf-1}
\psi(g_{M, j})=(\psi\circ{\Phi_{S^n}}_{\#})(f_{M, j}).
\eneq
By the definition (\ref{eqphi}) and the continuity of $\psi$, one has for $g\in C(S^n),$ 
\[
\psi(g(r,x_1,\ldots, x_n))=g(\Pi(\{a_m\}), \Pi(\{b_{1,m}\}, ..., \Pi(\{b_{n,m}\}))=\Pi(\{g(a_m, b_{1,m}, ...,b_{n,m})\}).
\]
Similarly, through a change of variables, the induced \hm\,
\[
{\psi\circ\Phi_{S^n}}_{\#}: C_0(\R^n)\rightarrow C(S^n)\to \prod B(H)/\bigoplus B(H)
\]
satisfies
\beq \label{eq:gf}
[{\psi\circ\Phi_{S^n}}_{\#}](f(\zeta))=f(\Pi(\{h_{1,m}\},...,\Pi(h_{n,m})\}))=\Pi(\{f(h_{1,m},...,h_{n,m})\})
\eneq
for $f\in C_0(\R^n).$ 
Applying this to $f_{M, j},$ $t_je_{M+1}(\|\zeta\|_2), e_{M}(\|\zeta\|_2)$ in $C_c(\R^n)$, 
we have
\beq\label{eq:mid}
(\psi\circ{\Phi_{S^n}}_{\#})(f_{M, j})&=&\Pi(\{h_{j,m}\} \{e_{M+1}(d_m^{1/2})\}\{e_{M}(d_m^{1/2})\})\\
&=&\Pi(\{h_{j,m} e_{M}(d_m^{1/2})\}),  
\eneq
for $1\le j\le n.$
Combining (\ref{eq:gf}) and (\ref{eq:mid}), we obtain
\beq \label{eq:psig}
 \psi(g_{M, j})=({\psi\circ\Phi_{S^n}}_{\#})(f_{M, j})
 =\Pi(\{h_{j,m} e_{M}(d_m^{1/2})\})
\eneq
and then
\[
\lim_{m\to\infty}\|\widetilde\psi_m(g_{M,j})-h_{j,m}e_{M}(d_m^{1/2})\|=0,
\]
that is, there exists $N_2>0$, so that for all $m>N_2$
\[
\|\widetilde\psi_m(g_{M,j})-h_{j,m}e_{M}(d_m^{1/2})\|<\epsilon,\,\,\, j=1,2,...,n.
\]

Similarly as in (\ref{eq:psig}), we have 
\beq
 && \psi(e_{M}^{S^n})=({\psi\circ\Phi_{S^n}}_{\#})(e_{M})
 ={\Pi}(\{e_{M}(d_m^{1/2})\}).
\eneq
Hence, there exists $N_3>0,$ for all $m>N_3,$ 
\beq
&& \|\widetilde\psi_m(e_{M}^{S^n})-e_{M}(d_m^{1/2})\|<\epsilon.
\eneq

Since $C(S^n)$ is commutative, 
we must have 
\[
\lim_{n\to\infty}\| e_{M}(d_m^{1/2})h_{j,m}-h_{j,m}e_{M}(d_m^{1/2}) \|=0
\]
Thus, there exists $N_4>0$  such that, for all $m>N_4$, 
\[
\| e_{M}(d_m^{1/2})h_{j,m}-h_{j,m}e_{M}(d_m^{1/2}) \|<\epsilon
\]
for $j=1,
\ldots, n$.

Finally, since the $*$-homomorphism $\psi$ is multiplicative and $\widetilde\psi$ is its lift, for the given $\eta>0$, there exists $N_5$, such that for all $m>N_5$, the c.p.c. map $\widetilde\psi_m$ is $\eta$-multiplicative, i.e.,
\[
\|\widetilde\psi_m(x)\widetilde\psi_m(y)-\widetilde\psi_m(xy)\|<{\eta} { \tforal} x,y\in \mathcal{F}\subset C(S^n).
\]

To conclude, if we choose $m>\max\{N_1,\ldots, N_5\}$, then for the c.p.c. map $\widetilde\psi_m$, none of the inequalities (\ref{eq1})-(\ref{eq2}) holds. 
The lemma is then proved.
\end{proof}

%


\begin{cor}\label{Cs2app2}
Let $\ep>0,$ $\eta>0$ and $n\in \N.$  Let ${\cal F}$ be a finite subset of $C(S^n).$
Let $M\ge 1.$
Then there exists $\dt>0$ satisfying the following:
For any $n$-tuple of self-adjoint operators $h_1,h_2,...,h_n$ densely defined on an infinite dimensional 
separable Hilbert space $H$ such that
\beq
&&\|[\tilde h_i,\,\tilde h_j]\|<\dt,\,\,\,\|\tilde h_i-\tilde h_i^*\|<\dt, \quad i,j=1,2,...,n
\eneq
where $\tilde h_j=h_j(1+d)^{-1}$ and $d=\sum_{j=1}^n h_j^2,$ 
there exists a c.p.c. map  $\phi: C(S^n)\to B(H)$ 
such that  (with $d=\sum_{j=1}^n h_j^2$)
\beq\label{Cs2app2-1}
&&\|\phi(r)-(-1+d)(1+d)^{-1}\|<\ep,\,\,\|\phi(x_j)-2h_j(1+d)^{-1}\|<\ep\\\label{Cs2app2-2} 
&&\|\phi(x_j)-2(1+d)^{-1/2}h_j(1+d)^{-1/2}\|<\ep  \\\label{Cs2app2-3}
&&\|\phi(g_{M, j})-h_je_M(d^{1/2})\|<\ep,\\\label{Cs2app2-4}
&&\|\phi(e_M^{S^n})-e_M(d^{1/2})\|<\ep\tand\\\label{Cs2app2-5}
&&\|e_M(d^{1/2})h_j-h_je_M(d^{1/2})\|<\ep,
\eneq
$j=1,2,...,n.$
Moreover, we may require that 
\beq
\|\phi(f)\phi(g)-\phi(fg)\|<\eta\tforal f,\, g\in {\cal F}.
\eneq
\end{cor}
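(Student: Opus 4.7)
The plan is to reduce to Lemma~\ref{Ls2app2} via a contradiction/sequence argument, exactly imitating the proof of that lemma but starting from the weaker hypotheses placed on the bounded transforms $\tilde h_j$. The key observation is that Lemma~\ref{Lappcom-1nbd} is tailor-made for this reduction: it converts the stated bounds on $\|[\tilde h_i,\tilde h_j]\|$ and $\|\tilde h_j-\tilde h_j^*\|$ into precisely the bounds $\|[a,b_j]\|<\eta$, $\|b_j-b_j^*\|<\eta$, and $\|[b_i,b_j]\|<\eta$ on the sphere generators of Definition~\ref{Dabj} that drove the proof of Lemma~\ref{Ls2app2}.

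Suppose the corollary fails. Then for every $m\in\N$ one finds an $n$-tuple $\{h_{1,m},\ldots,h_{n,m}\}$ of densely defined self-adjoint operators on $H$ with
\[
\|[\tilde h_{i,m},\tilde h_{j,m}]\|<1/m \andeqn \|\tilde h_{j,m}-\tilde h_{j,m}^*\|<1/m,
\]
such that no c.p.c.\ map $C(S^n)\to B(H)$ satisfies all of \eqref{Cs2app2-1}--\eqref{Cs2app2-5} together with the approximate multiplicativity on $\mathcal F$. Setting $a_m=(-1+d_m)(1+d_m)^{-1}$ and $b_{j,m}=2\tilde h_{j,m}$ as in Definition~\ref{Dabj}, Lemma~\ref{Lappcom-1nbd} yields
\[
\|[a_m,b_{j,m}]\|,\ \|b_{j,m}-b_{j,m}^*\|,\ \|[b_{i,m},b_{j,m}]\|\longrightarrow 0.
\]
As in Lemma~\ref{Ls2app2}, this allows me to define a $*$-homomorphism $\psi:C(S^n)\to\prod_m B(H)/\bigoplus_m B(H)$ via $\psi(r)=\Pi(\{a_m\})$ and $\psi(x_j)=\Pi(\{b_{j,m}\})$, using the universal description of $C(S^n)$ from Remark~\ref{RemSn}. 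Lifting $\psi$ by the Choi--Effros theorem to a c.p.c.\ map $\widetilde\psi:C(S^n)\to\prod_m B(H)$ and taking the $m$-th components $\widetilde\psi_m$, the arguments of Lemma~\ref{Ls2app2} based on the pullback map $(\Phi_{S^n})_\#:C_0(\R^n)\to C(S^n)$ and factorizations such as $f_{M,j}(\zeta)=t_j\, e_{M+1}(\|\zeta\|_2)\,e_M(\|\zeta\|_2)$ yield \eqref{Cs2app2-1}, \eqref{Cs2app2-3}, \eqref{Cs2app2-4}, \eqref{Cs2app2-5}, and the $\eta$-multiplicativity on $\mathcal F$ for $\widetilde\psi_m$ with $m$ sufficiently large, contradicting the choice of the sequence.

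It remains only to deduce \eqref{Cs2app2-2}, which concerns the symmetrized bounded transform $\bar h_j=(1+d)^{-1/2}h_j(1+d)^{-1/2}$. For this I invoke Lemma~\ref{Lemtildehjbarh}: the hypothesis $\|\tilde h_j-\tilde h_j^*\|<\delta$ forces $\|\tilde h_j-\bar h_j\|$ to be as small as we wish provided $\delta$ is chosen small enough. Combined with \eqref{Cs2app2-1}, which says $\phi(x_j)\approx 2\tilde h_j=2h_j(1+d)^{-1}$, this yields $\phi(x_j)\approx 2\bar h_j$ within $\epsilon$. The main expected obstacle is purely in the bookkeeping of constants: choosing one $\delta$ small enough to simultaneously trigger Lemmas~\ref{Lappcom-1nbd} and~\ref{Lemtildehjbarh}, and to serve as the input tolerance demanded by the Lemma~\ref{Ls2app2}-style sequence argument, and then distributing the $\epsilon$-budgets so that all of \eqref{Cs2app2-1}--\eqref{Cs2app2-5} come out with the stated constants; no genuinely new technical difficulty is anticipated beyond that.
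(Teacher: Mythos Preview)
Your proposal is correct and matches the paper's approach essentially line for line: the paper also says the corollary follows by repeating the proof of Lemma~\ref{Ls2app2}, replacing the use of Lemma~\ref{Lappcom-1n} by Lemma~\ref{Lappcom-1nbd} to get the smallness of $\|[a_m,b_{j,m}]\|$, $\|b_{j,m}-b_{j,m}^*\|$, $\|[b_{i,m},b_{j,m}]\|$, and handling \eqref{Cs2app2-2} separately via Lemma~\ref{Lemtildehjbarh} combined with \eqref{Cs2app2-1}.
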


\begin{proof}
First, we claim that we need only to prove the corollary without \eqref{Cs2app2-2}.
In fact, suppose $\delta'>0$ is found for $\epsilon/2$ so that the lemma is proved without 
\eqref{Cs2app2-2}.
Then applying Lemma~\ref{Lemtildehjbarh}, for $\epsilon/2$, one chooses $\delta''>0$, so that whenever $\|\tilde h_j-\tilde h_j^*\|<\delta''$ for $j=1,\ldots, n$, we have 
\[
\|h_j(1+d)^{-1}-(1+d)^{-1/2}h_j(1+d)^{-1/2}\|<\epsilon/4 \quad j=1, \ldots, n.
\]
Then set $\delta=\min\{\delta', \delta''\}>0.$
Whenever $\|[\tilde h_i, \tilde h_j]\|<\delta$ and $\|\tilde h_j-\tilde h_j^*\|<\delta$ for $i,j=1,2,\ldots, n$, we additionally obtain 
{{\begin{multline*}
\|\phi(x_j)-2(1+d)^{-1/2}h_j(1+d)^{-1/2}\| \\
\le \|\phi(x_j)-2h_j(1+d)^{-1}\|+2\|h_j(1+d)^{-1}-(1+d)^{-1/2}h_j(1+d)^{-1/2}\|<\epsilon
\end{multline*} }}
and the claim is proved.

The corollary  then follows from the same argument as in Lemma~\ref{Ls2app2}, with the following modifications.

In the paragraph below (\ref{eqanbn}), one needs to apply Lemma~\ref{Lappcom-1nbd} instead: 

Given that $\|[\tilde h_{i,m}, \tilde h_{j,m}]\| < \frac{1}{m}$, $\|\tilde h_{i,m}-\tilde h_{i,m}^*\|<\frac1{m}$ and applying Lemma~\ref{Lappcom-1nbd} we obtain
 $-1\le a_m\le 1$, $\sum_{j=1}^n b_{j,m}^*b_{j,m}=1-a_m^2$ and additionally 
\[
\|[a_m,b_{j,m}]\|<\frac{2}{m}, ~~\|b_{j,m}-b_{j,m}^*\|<\frac{2}{m}~~\text{and}~~\|[b_{i,m},b_{j,m}]\|<\frac{2}{m}.
\] 
\end{proof}

\section{Synthetic spectrum}

\begin{df}\label{Dnsp}
Let $A$ be a  unital  \CA\, and 
$\Omega$ a compact metric space.
Fix a finite subset ${\cal F}\subset C(\Omega)^{\bf 1}.$ 
Suppose that $L: C(\Omega)\to A$ is a c.p.c. map.
Fix $0<\eta<1/2.$ 
Suppose that $X\subset \Omega$ is a compact subset and 
$\phi: C(X)\to A$ is a unital c.p.c. map
such that 
\beq
%
\hspace{-2.1in}{\rm (i)} &&\|L(f|_X)-\phi(f|_X)\|<\eta \rforal f\in {\cal F}, \\
\hspace{-2.1in}{\rm (ii)}&&\|\phi((fg)|_X)-\phi(f|_X)\phi(g|_X)\|<\eta\rforal f, g\in {\cal F},\andeqn\\
\hspace{-2.1in}{\rm (iii)} &&  \|\phi(f)\|\ge 1-\eta
\eneq
for any $f\in C(X)_+$  which has 
value $1$ on an open ball with the center $x\in X$ (for every $x\in X$) and the radius $\eta.$
Then we say that $X$ is an $\eta$-near-spectrum of $L$  associated with ${\cal F}.$
We write 
$$
n{\rm Sp}^\eta(L, {\cal F}):=X.
$$

If, moreover,  $\phi$ is a unital \hm,  then  we say $X$ is 
an $\eta$-spectrum of $L.$   

Suppose that $C(\Omega)$ is  generated by $e_1, e_2,...,e_n\in C(\Omega)^{\bf 1}.$
We may choose ${\cal F}=\{e_1,e_2,...,e_n\}.$ In that case (when $e_1,e_2,...,e_n$ are understood), we may write 
\beq
n{\rm Sp}^\eta(L):=n{\rm Sp}^\eta(L, {\cal F}).
\eneq

Although $n{\rm Sp}^\eta(L, {\cal F})$ is not uniquely defined, the notation is convenient to employ. Its definition is unique up to an $\eta$-neighborhood of $X$, and for our purposes, we will only need the special case established in Proposition \ref{Pspsub}.
\end{df}

\begin{lem}\label{Lnsp}
Let $\Omega$ be a compact metric space
and ${\cal F}\subset C(\Omega)$ 
be a fixed finite subset.
Let $\eta>0.$ 
  Then there is $\dt>0$ and a finite subset ${\cal G}\subset C(\Omega)$
satisfying the following:

Suppose that $L: C(\Omega)\to A$ is a positive linear map, (for any unital \CA\, $A$)  such that
$\|L(1)\|\ge 1/2$ and 
\beq
\|L(fg)-L(f)L(g)\|<\dt\tforal f,g\in {\cal G}.
\eneq
Then $n{{\rm Sp}}^\eta(L, {\cal F})$ exists, i.e., there exists  a compact subset 
$X\subset \Omega$ and 
a c.p.c{.} map $\phi: C(X)\to A$ such that
\beq
%
\hspace{-2.1in}{\rm (i)} &&\|L(f)-\phi(f|_X)\|<\eta \rforal f\in {\cal F}, \\
\hspace{-2.1in}{\rm (ii)}&&\|\phi((fg)|_X)-\phi(f|_X)\phi(g|_X)\|<\eta\rforal f, g\in {\cal F},\andeqn\\
\hspace{-2.1in}{\rm (iii)} &&  \|\phi(g|_X)\|\ge 1-\eta
\eneq
for any $g\in C(X)_+$  which has 
value $1$ on an open ball with the center $x\in X$ (for some $x$) and the radius $\eta.$

Moreover, if $C(\Omega)$ has a finite generating subset ${\cal F}_g,$ 
we may choose ${\cal G}={\cal F}_g.$
\end{lem}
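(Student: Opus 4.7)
I would prove the lemma by a standard contradiction/ultralimit argument: an honest $*$-homomorphism $C(\Omega)\to\prod A_m/\bigoplus A_m$ carved out as a limit of the $L_m$'s factors automatically, by Gelfand duality, through $C(X)$ for a closed $X\subset\Omega$, and a Choi--Effros lift pushes the conclusion back down to each finite level.

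\textbf{Setup.} Suppose the conclusion fails for some $\Omega$, $\mathcal{F}$, $\eta$. Fix a countable dense set $\{f_k\}\subset C(\Omega)$ containing $\mathcal{F}\cup\{1\}$ and set $\mathcal{G}_m=\mathcal{F}\cup\{1,f_1,\ldots,f_m\}$, $\dt_m=1/m$. By the failure hypothesis, choose positive linear maps $L_m:C(\Omega)\to A_m$ with $\|L_m(1)\|\ge 1/2$ and $\|L_m(fg)-L_m(f)L_m(g)\|<1/m$ for $f,g\in\mathcal{G}_m$, such that no pair $(X,\phi)$ realizes (i)--(iii). Almost-multiplicativity at the unit gives $L_m(1)\approx L_m(1)^2$; since $L_m(1)$ is self-adjoint its spectrum clusters near $\{0,1\}$, so combined with $\|L_m(1)\|\ge 1/2$ we get $\|L_m(1)\|=\|L_m\|\le 1+O(1/m)$. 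Hence $(L_m)$ induces a bounded linear map $L_\infty:C(\Omega)\to B:=\prod_m A_m/\bigoplus_m A_m$.

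\textbf{Ultralimit and factorization.} Density of $\bigcup_m\mathcal{G}_m$ in $C(\Omega)$ combined with the almost-multiplicativity on $\mathcal{G}_m$ and continuity forces $L_\infty$ to be multiplicative; with positivity (and the self-adjoint decomposition of complex $f\in C(\Omega)$) this makes $L_\infty$ a $*$-homomorphism, and $L_\infty(1)=p$ is a projection of norm $1$ in $B$. By Gelfand duality, $\ker L_\infty=\{f\in C(\Omega):f|_X=0\}$ for a closed subset $X\subset\Omega$, and $L_\infty$ factors as
\[
C(\Omega)\xrightarrow{\ \text{res}\ }C(X)\xrightarrow{\ \phi\ }pBp,
\]
with $\phi$ a unital injective $*$-homomorphism, hence an isometry. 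Since $C(X)$ is nuclear, the Choi--Effros lifting theorem gives a c.p.c.\ lift $\widetilde\phi:C(X)\to\prod_m A_m$; write $\phi_m:C(X)\to A_m$ for its components.

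\textbf{Verifying (i)--(iii).} For $f,g\in\mathcal{F}$ (a finite set), $\Pi(\{\phi_m(f|_X)\})=\phi(f|_X)=L_\infty(f)=\Pi(\{L_m(f)\})$, so $\|\phi_m(f|_X)-L_m(f)\|\to 0$, giving (i) for large $m$; multiplicativity of $\phi$ similarly yields (ii). For (iii), fix a finite $(\eta/2)$-net $\{x_1,\ldots,x_K\}\subset X$ and pick bump functions $h_k\in C(X)_+$ with $h_k(x_k)=1$, $0\le h_k\le 1$, and $\mathrm{supp}(h_k)\subset B(x_k,\eta/2)\cap X$. Isometry of $\phi$ gives $\|\phi(h_k)\|=1$, so $\|\phi_m(h_k)\|>1-\eta$ for every $k$ and $m$ large. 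For an arbitrary admissible $g\in C(X)_+$ (value $1$ on $B(x,\eta)\cap X$ for some $x\in X$), choose $k$ with $d(x,x_k)<\eta/2$; then $B(x_k,\eta/2)\cap X\subset B(x,\eta)\cap X$, so $g\ge h_k$ pointwise on $X$, and positivity of $\phi_m$ yields $\phi_m(g)\ge\phi_m(h_k)\ge 0$, hence $\|\phi_m(g)\|\ge\|\phi_m(h_k)\|>1-\eta$. For $m$ large the triple $(X,\phi_m)$ satisfies (i)--(iii), contradicting the choice of $L_m$.

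\textbf{Main obstacle and the moreover clause.} The delicate point is (iii): the admissible $g$'s form an uncountable family, so pointwise convergence $\phi_m\to\phi$ is not enough on its own; the finite-net plus bump-function reduction above replaces this continuum by a finite family $\{h_k\}$ on which the isometry of $\phi$ can be transferred to $\phi_m$ for $m$ large. For the moreover clause, if $\mathcal{F}_g$ generates $C(\Omega)$ as a $C^*$-algebra then by Stone--Weierstrass the $*$-polynomials in $\mathcal{F}_g$ are dense; almost-multiplicativity on $\mathcal{F}_g$ propagates to any fixed $*$-polynomial in $\mathcal{F}_g$ with constants depending only on the polynomial, so $L_\infty$ is still multiplicative on a dense $*$-subalgebra and hence on all of $C(\Omega)$, and the same proof applies with $\mathcal{G}=\mathcal{F}_g$.
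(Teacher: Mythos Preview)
Your argument is essentially the paper's: the same contradiction/ultraproduct setup, factorization of $L_\infty$ through $C(X)=C(\Omega)/\ker L_\infty$, Choi--Effros lift, and the same finite-net-plus-bump-function reduction to handle the uncountable family in (iii); you are in fact slightly more careful than the paper in checking that $\|L_m\|$ stays bounded so that $L_\infty$ is well defined. For the moreover clause your ``almost-multiplicativity propagates to polynomials'' is a bit glib---knowing $\|L_m(fg)-L_m(f)L_m(g)\|<\dt$ only for $f,g\in\mathcal{F}_g$ gives no direct control of $L_m((f_1f_2)f_3)-L_m(f_1f_2)L_m(f_3)$; the clean route in the ultralimit is to note that $L_\infty:C(\Omega)\to pBp$ is u.c.p.\ (positive maps out of a commutative $C^*$-algebra are c.p.) and $L_\infty(g^*g)=L_\infty(g)^*L_\infty(g)$ for $g\in\mathcal{F}_g$, so $\mathcal{F}_g$ lies in the multiplicative domain, which is a $C^*$-subalgebra and therefore all of $C(\Omega)$---but the paper does not spell this step out either.
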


\begin{proof}
Suppose that the lemma is false.  Then there is
a finite subset $\mathcal{F}_0$ and $\eta_0>0,$ 
a sequence of unital \CA s $\{A_m\}$ and  a sequence 
of positive linear maps $L_m: C(\Omega)\to A_m$ with  $\|L_m(1)\|\ge 1/2$ and 
\beq\label{Lnsp-5}
\lim_{m\to\infty}\|L_m(fg)-L_m(f)L_m(g)\|=0\rforal f, g\in
C(\Omega)
\eneq
such that there is no compact subset $X\subset \Omega$ and 
c.p.c. map $\phi_m: C(X)\to A_m$ satisfying
 (i), (ii) or (iii) above, i.e.,  one of the following does not hold:
 \beq
%
\hspace{-1.3in}{\rm (i)} &&\|L_m(f)-\phi_m(f|_X)\|< \eta_0 \rforal f\in {\cal F}_0, \\
\hspace{-1.3in}{\rm (ii)}&&\|\phi_m((fg)|_X)-\phi_m(f|_X)\phi_m(g|_X)\|<\eta_0\rforal f, g\in {\cal F}_0,\andeqn\\
\hspace{-1.3in}{\rm (iii)} &&  \|\phi_m(g|_X)\|\ge 1-\eta_0
\eneq
for any $g\in C(X)_+$  which has 
value $1$ on an open ball with the center $x\in X$ (for some $x$) and the radius $\eta_0.$

Define $\Lambda: C(\Omega)\to \prod_{m=1}^\infty A_m$ by
$\Lambda(f)=\{L_n(f)\}$ for all $f\in C(\Omega).$
Set  $Q=\prod_{m=1}^\infty A_m/\bigoplus_{m=1}^\infty A_m$ and let $\Pi: \prod_{m=1}^\infty A_m\to Q$ be the 
quotient map.

Consider the map $\Psi=\Pi\circ \Lambda: C(\Omega)\to Q.$   By \eqref{Lnsp-5},
$\Psi$ is a \hm. 
Since $\|L_m(1)\|\ge 1/2,$ $\Lambda(1)\not\in \bigoplus_{m=1}^\infty A_m.$
In other words, $\Psi(1)\not=0.$  Denote $J={\rm ker}\Psi.$ 
Hence there exists a compact subset $X\subset \Omega$ such 
that $J=\{g\in C(\Omega): g|_X=0\}.$ Hence, $C(\Omega)/J\cong C(X).$ 
Then $\Psi$ induces an injective \hm\, $\Phi: C(X)\to Q$ by
$\Psi=\Phi\circ q,$ where $q: C(\Omega)\to C(X)$ is the quotient map. 

By the Choi-Effros lifting theorem (\cite{CE}), there is a c.p.c.map
$F: C(X)\to \prod_{m=1}^\infty A_m$ such that
\beq
\Pi\circ F=\Phi.
\eneq
We may write $F(f)=\{F_m(f)\}$ for all $f\in C(X),$ where each $F_m: C(X)\to A_m$ 
is a c.p.c. map. 
Since $\Psi=\Phi\circ q,$ 
\beq
\lim_{m\to\infty}\|F_m(q(f))-L_m(f)\|=0\rforal f\in C(\Omega),
\eneq
or equivalently,
\beq
\lim_{m\to\infty}\|F_m(f|_X)-L_m(f)\|=0\rforal f\in C(\Omega).
\eneq
Moreover,  by \eqref{Lnsp-5},
\beq
\lim_{m\to\infty}\|F_m(fg|_X)-F_m(f|_X)F_m(g|_X)\|=0\rforal f, g\in 
C(\Omega).
\eneq
Since ${\cal F}_0$ is a finite subset, there is $n_1\in \N$ such that, for all $m\ge n_1,$
\beq\label{Lnsp-12}
&&\|L_m(f)-F_m(f|_X)\|<\eta_0\rforal f\in {\cal F}_0\andeqn\\\label{Lnsp-13}
&&\|F_m((fg)|_X)-F_m(f|_X)F_m(g|_X)\|<\eta_0\rforal f\in {\cal F}_0.
\eneq
Since 
$X$ is compact, there are $x_1, x_2,..., x_k\in X$ 
such that
\beq
X\subset \bigcup_{i=1}^k B(x_i, \eta_0/4).
\eneq
Let $g_1, g_2,...,g_k\in C(X)_+$ such that $0\le g_i\le 1,$ 
$g_i(x)=1$ when $x\in B(x_i, \eta_0/8),$ and $g_i(x)=0$ when $x\not\in B(x_i, \eta_0/4),$
$i=1,2,...,k.$ 
Then, for any  $x\in X,$ there is $i\in \{1,2,...,k\}$ such that $x\in B(x_i, \eta_0/4).$ 
In other words,  for any $x\in X,$ there is $i\in \{1,2,...,k\}$ such that
\beq
B(x_i,\eta_0/4)\subset B(x, \eta_0).
\eneq
Therefore, for any $g\in C(X)_+$  which has 
value $1$ on an open ball with the center $x\in X$ (for some $x$) and radius $\eta_0,$
there is $i\in \{1,2,...,k\}$ such that
\beq\label{Lnsp-15}
g\ge g_i.
\eneq

Since $\Phi$ is injective,  $\|\Phi(g_i)\|=1,$ $i=1,2,...,k.$
There is $n_2\in \N$ such that, for all $m\ge n_2,$ 
\beq\label{Lnsp-16}
\|F_m(g_i)\|\ge 1-\eta_0,\,\,\, i=1,2,...,k.
\eneq
Since $F_m$ is positive, by \eqref{Lnsp-15}, when $m\ge n_2,$
\beq\label{Lnsp-17}
\|F_m(g)\|\ge 1-\eta_0
\eneq
for any $g\in C(X)_+$  which has 
value $1$ on an open ball with the center $x\in X$ (for some $x$) and radius $\eta_0.$

Now choose $n_3=\max\{n_1, n_2\}.$  For $m\ge n_3,$ by  \eqref {Lnsp-12}, \eqref{Lnsp-13} and 
\eqref{Lnsp-17}, we obtain a contradiction.
\end{proof}

\begin{df}\label{Dsynsp2}
Suppose that $\{h_1, h_2,...,h_n\}$ is an $n$-tuple of densely defined self-adjoint operators on an infinite dimensional separable Hilbert space
$H$ as in Definition~\ref{Dntuple}. 
Fix $0<\eta<1$ and a countable subset $D^\eta\subset \R^n$ such that, for any 
$\xi=(\lambda_1, \lambda_2,..., \lambda_n)\in \R^n,$ 
there is $y\in {{D^\eta}}$ such that
\beq
\|\xi-y\|_2<\eta/2,
\eneq
and, for any $k\in \N,$ 
\beq
D^\eta\cap \{\zeta\in \R^n: \|\zeta\|_2\le k\}
\eneq
is a finite subset. Such $D^{\eta}$ is called $\eta/2$-dense.

Fix $\xi=(\lambda_1, \lambda_2,...,\lambda_n)\in \R^n.$
Let 
$\theta_{\lambda_i,\eta}\in C(\R)$ be such that 
$0\le \theta_{\lambda_i, \eta}\le 1,$ $\theta_{\lambda_i, \eta}(t)=1,$ if $|t-\lambda_i|\le 3\eta/4,$ 
$\theta_{\lambda_i,\eta}(t)=0$
if $|t-\lambda_i|\ge  \eta,$  and $\theta_{\lambda_i, \eta}$ is linear in $(\lambda_i-\eta, \lambda_i-3\eta/4)$
and in $(\lambda_i+3\eta/4, \lambda_i+\eta),$
for $i=1,2,...,n.$ 
 
 Define, for $(t_1, t_2,...,t_n)\in \R^n,$
\beq
\Theta_{\xi, \eta}(t_1,t_2,...,t_n)&=&\prod_{i=1}^n\theta_{\lambda_i, \eta}(t_i)\andeqn \label{eq:ThetaO}
\\ \label{Dpsp-5}
\Theta_{\xi, \eta}(h_1,h_2,...,h_n)&=&\theta_{\lambda_1,\eta}(h_1)\theta_{\lambda_2,\eta}(h_2)\cdots 
\theta_{\lambda_n,\eta}(h_n).
\eneq
Note that  we do not assume that $h_1, h_2,...,h_n$ mutually commute and  the product in \eqref{Dpsp-5} has 
a fixed order.

Set 
\beq
s{\rm Sp}^\eta((h_1,h_2,..., h_n))=
\bigcup{}_{_{\xi \in {D}^\eta, \|\Theta_{\xi, \eta}(h_1,h_2,...,h_n)\|\ge 1-\eta}} \overline{B(\xi, \eta)}. 
\eneq

Fix $M\ge 1$ and a finite subset $D_M^\eta\subset \{\zeta\in \R^n: \|\zeta\|_2\le M\}$ 
which is {$\eta$}-dense in  $\{\zeta\in \R^n: \|\zeta\|_2\le M\}.$
Write $D^\eta_M=\{\xi_1, \xi_2,...,\xi_N\}.$

For each $M>0,$ write 
\beq
s{\rm Sp}_M^\eta((h_1,h_2,..., h_n))=\bigcup{_{\xi_j\in {D_M^\eta},
 \|\Theta_{
 \xi_j, \eta}(h_1,h_2,...,h_n)\|\ge 1-\eta}} \overline{B(\xi_j, \eta)}. 
\eneq
Note that $s{\rm Sp}_M^\eta((h_1,h_2,..., h_n))$ is a union of finitely 
many closed balls, and therefore a compact subset of $\R^n.$ 

For $k\in\mathbb N$, let 
\beq
{\mathbb P}_k^M=\bigcup_{l=1}^k\{\xi=(x_1, x_2,...,x_n): x_j=m_j/l,|m_j|\le Ml, \, m_j\in \Z, 1\le j\le n\}.
\eneq
For $1>\eta>0,$ choose 
$k=\inf\{l\in \N: (M+1)/l<{\eta\over{2\sqrt{n}}}\}.$  Then ${\mathbb P}_k^M$ is $\eta/2$ dense 
in $\{\zeta\in \R^n: \|\zeta\|_2\le M\}.$
For convenience, in what follows, we may always assume 
that $D^\eta_M={\mathbb P}_k^M.$  In particular $D^\eta\subset D^\dt,$ if $\eta>\dt.$
\end{df}

\begin{rem}\label{Rmspectrum}

In general, let $A$ be a unital \CA\, and $a_1, a_2,...,a_n\in A_{s.a.}$ with 
$\|a_i\|\le M,$ $1\le i\le n.$  One defines $s{\rm Sp}^\eta((a_1,a_2,...,a_n))$ as in Definition~2.7 of \cite{Lincmp2025}.  Then $s{\rm Sp}^\eta(( a_1,a_2,...,a_n))=s{\rm Sp}^\eta_{\sqrt{n}M}((a_1,a_2,...,a_n)).$ 
One may notice that, unlike $n{\rm Sp}^\eta(L, {\cal F}),$ 
$s{\rm Sp}^\eta_M((a_1,a_2,...,a_n))$ is uniquely defined.

\begin{df}\label{DfSSP}
{{Fix $0<\eta<1/4.$  Fix 
$$
\xi=(t_0, t_1,t_2,...,t_n)\in S^n=\{(r, \zeta)\in [-1,1]\times [-1,1]^n:   
-1 \le r \le 1,\, \|\zeta\|_2 \le 1,\, \|\zeta\|_2^2 = 1 - r^2 \}.
$$
Define, for each $x=(s_0, s_1, s_2,...,s_n)\in S^n\subset \R^{n+1},$  
\beq \label{eq:ThetaS}
\Theta^S_{\xi, \eta}(x)=\theta_{t_0, \eta}(s_0)\theta_{t_1,\eta}(s_1)\cdot\cdots\cdot \theta_{t_n,\eta}(s_n).
\eneq     
}}
{{Suppose that $h_1, h_2,...,h_n$ form an $n$-tuple densely defined self-adjoint operators on an infinite dimensional separable Hilbert space
$H$ as in Definition~\ref{Dntuple}. 
Define (recall that $d=\sum_{j=1}^n h_j^2$ and $\bar h_j=(1+d)^{-1/2}h_j(1+d)^{-1/2},$ $1\le j\le n$)
\beq \label{eq:ThetaSh}
\Theta^S_{\xi, \eta}(h_1,h_2,...,h_n)&=&
\theta_{t_0,\eta}((-1+d)(1+d)^{-1})\theta_{t_1,\eta}(2\bar h_1)\cdots 
\theta_{t_n,\eta}(2\bar h_n).
\eneq
Put $\Sigma^{\eta}=\Phi_{S^n}(D^\eta).$ 
Set
\beq
s{\rm Sp}_{S^n}^\eta((h_1, h_2,...,h_n))=\bigcup{}_{_{x_j\in \Sigma^\eta, \|\Theta^S_{x_j, \eta}(h_1,h_2,...,h_n)\|\ge 1-\eta}} \overline{D(x_j, \eta)},
\eneq
where $D(x_j, \eta)=\{\xi\in S^n\setminus \zeta^{np}: \|\xi-x_j\|_2<\eta\}$
and $\|\cdot\|_2$ is the $(n+1)$-dimensional Euclidean norm. }}
{{Define 
\beq
S{\rm Sp}^\eta=\Phi_{S^n}^{-1}(s{\rm Sp}_{S^n}^\eta). 
\eneq
Fix $M\ge 1,$ and let $D_M^\eta$ be as in Definition \ref{Dsynsp2}. 
Put $\Sigma_M=\Phi_{S^n}(D_M^\eta).$
Define
\beq
&&s{\rm Sp}_{S^n, M}^\eta((h_1, h_2,...,h_n))=\bigcup{}_{_{x_j\in {\Sigma_M}, \|\Theta^S_{x_j, \eta}(h_1,h_2,...,h_n)\|\ge 1-\eta}} \overline{D(x_j, \eta)}\andeqn\\
%
\label{DSSPM}
&&S{\rm Sp}_M^\eta((h_1,h_2,...,h_n))=\Phi_{S^n}^{-1}\left(\bigcup{}_{_{x_j\in {\Sigma_M}, \|\Theta_{x_j, \eta}^{S}(h_1,h_2,...,h_n)\|\ge 1-\eta}} \overline{D(x_j, \eta)}\right).
\eneq
}}
\end{df}
%
%
%
\end{rem}

\begin{lem}[{\cite[Lemma 2.12]{Linself}}]\label{Lpurb}
Let $X$ be a compact metric space, ${\cal G}\subset C(X)$ be 
a finite 
subset, and $C\subset C(X)$ be  the \SCA\, generated by ${\cal G}.$

Then, for any $\ep>0$ and any finite subset ${\cal F}\subset  C\subset C(X),$ 
there exists $\dt({\cal F}, \ep)>0$ satisfying the following:
Suppose that $A$ is a unital \CA\, and $\phi_1, \phi_2: C(X)\to A$ are 
unital c.p.c. maps such that
\beq
\|\phi_1(g)-\phi_2(g)\|<\dt \andeqn 
\|\phi_i(gh)-\phi_i(g)\phi_i(h)\|<\dt \tforal g, h\in {\cal G},\,\,\, i=1,2.
\eneq
Then, for all $f\in {\cal F},$ 
\beq
\|\phi_1(f)-\phi_2(f)\|<\ep.
\eneq

Moreover, if $X\subset \{\zeta\in \R^n: \|\zeta\|_2\le M\},$ and  ${\cal G}=\{1, e_1|_X,e_2|_X,...,e_n|_X\},$
then, for any $\ep>0$ and 
any finite subset  ${\cal H}\subset C_0( [-M,M]),$ 
we may also require
that, for all $h\in {\cal H},$ 
\beq
\|\phi_1(h(e_j|_X))-h(\phi_1(e_j|_X))\|<\ep,\,\,\, 1\le j\le n.
\eneq
\end{lem}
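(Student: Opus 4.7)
\emph{The plan is to prove the lemma by contradiction via a sequence-algebra argument}, in the spirit of Lemmas~\ref{Ls2app} and \ref{Ls2app2}. Without loss of generality we may enlarge $\mathcal{G}$ to $\mathcal{G}\cup\mathcal{G}^*$, so we henceforth assume $\mathcal{G}$ is closed under the adjoint; this does not change the generated \SCA\ $C$ and only tightens $\dt$. Suppose the statement fails: then there exist $\ep_0>0$, finite subsets $\mathcal{F}\subset C$ and (for the moreover part) $\mathcal{H}\subset C_0([-M,M])$, a sequence of unital \CA s $A_m$, and pairs of unital c.p.c.\ maps $\phi_{1,m},\phi_{2,m}:C(X)\to A_m$ satisfying
\beq
\|\phi_{1,m}(g)-\phi_{2,m}(g)\|<1/m \andeqn \|\phi_{i,m}(gh)-\phi_{i,m}(g)\phi_{i,m}(h)\|<1/m
\eneq
for all $g,h\in\mathcal{G}$ and $i=1,2$, yet (after passing to a subsequence) some $f_0\in\mathcal{F}$ realises $\|\phi_{1,m}(f_0)-\phi_{2,m}(f_0)\|\ge\ep_0$ for all $m$ (or, in the moreover version, some $h_0\in\mathcal{H}$ and index $j_0$ realises $\|\phi_{1,m}(h_0(e_{j_0}|_X))-h_0(\phi_{1,m}(e_{j_0}|_X))\|\ge\ep_0$ for all $m$). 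Form the quotient $Q=\prod_m A_m/\bigoplus_m A_m$ with quotient map $\Pi$, and define unital c.p.c.\ maps $\Phi_i:C(X)\to Q$ by $\Phi_i(f)=\Pi(\{\phi_{i,m}(f)\})$.

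In $Q$ the asymptotic relations become exact: $\Phi_1|_{\mathcal{G}}=\Phi_2|_{\mathcal{G}}$ and $\Phi_i(gh)=\Phi_i(g)\Phi_i(h)$ for all $g,h\in\mathcal{G}$. Since each $\phi_{i,m}$ is $\ast$-preserving, so is $\Phi_i$; and since $\mathcal{G}$ is $\ast$-closed, $\Phi_i(g^*g)=\Phi_i(g^*)\Phi_i(g)=\Phi_i(g)^*\Phi_i(g)$ and similarly $\Phi_i(gg^*)=\Phi_i(g)\Phi_i(g)^*$ for every $g\in\mathcal{G}$. By Choi's characterisation of the multiplicative domain of a unital c.p.c.\ map, $\mathcal{G}\subset\mathrm{MD}(\Phi_i)$; since $\mathrm{MD}(\Phi_i)$ is a norm-closed $\ast$-subalgebra, it contains the whole \SCA\ $C$ generated by $\mathcal{G}$. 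Thus $\Phi_i|_C$ is a $\ast$-homomorphism, and because $\Phi_1$ and $\Phi_2$ agree on the generating set $\mathcal{G}$, they agree on all of $C$.

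For the main assertion, since $\mathcal{F}\subset C$ we have $\Phi_1(f_0)=\Phi_2(f_0)$ in $Q$; the quotient-norm formula $\|\Pi(\{a_m\})\|=\limsup_m\|a_m\|$ then gives $\lim_m\|\phi_{1,m}(f_0)-\phi_{2,m}(f_0)\|=0$, contradicting the choice of $f_0$. For the moreover part, the real-valued (hence self-adjoint) generators $\{1,e_1|_X,\ldots,e_n|_X\}$ separate the points of $X\subset\{\zeta\in\R^n:\|\zeta\|_2\le M\}$, so by the Stone--Weierstrass theorem $C=C(X)$; hence $\Phi_1:C(X)\to Q$ is itself a $\ast$-homomorphism and each $\Phi_1(e_j|_X)$ is self-adjoint of norm at most $M$. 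Continuous functional calculus then gives $\Phi_1(h(e_j|_X))=h(\Phi_1(e_j|_X))$ for $h\in C_0([-M,M])$, and compatibility of functional calculus with the quotient (via polynomial approximation on $[-M,M]$ and the uniform bound $\|\phi_{1,m}(e_j|_X)\|\le M$) yields $h(\Phi_1(e_j|_X))=\Pi(\{h(\phi_{1,m}(e_j|_X))\})$. Hence $\lim_m\|\phi_{1,m}(h(e_j|_X))-h(\phi_{1,m}(e_j|_X))\|=0$, again a contradiction.

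The step I expect to be most delicate is upgrading $\ast$-multiplicativity from the finite generating set $\mathcal{G}$ to the entire \SCA\ $C$ in the quotient; this is where the multiplicative-domain theorem enters and where the $\ast$-closedness of $\mathcal{G}$ (achieved by the WLOG reduction) is essential. Once $\Phi_i|_C$ is known to be a $\ast$-homomorphism, the remaining passage back to the sequence via the quotient-norm formula and standard continuity of continuous functional calculus is routine.
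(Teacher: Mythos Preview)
The paper does not prove this lemma itself; it is quoted from \cite[Lemma~2.12]{Linself}. Your sequence-algebra plus multiplicative-domain argument is the standard route and is correct in substance, with one caveat concerning the opening reduction.

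Enlarging $\mathcal{G}$ to $\mathcal{G}\cup\mathcal{G}^*$ is not a free WLOG in the direction you need. Proving the lemma for $\mathcal{G}'=\mathcal{G}\cup\mathcal{G}^*$ yields a $\delta'$ for which the \emph{stronger} hypothesis (approximate multiplicativity on all of $\mathcal{G}'\times\mathcal{G}'$) implies the conclusion; to recover the original statement you would need the hypothesis on $\mathcal{G}\times\mathcal{G}$ to force the stronger one. Adjoint-linearity of the $\phi_i$ handles $\mathcal{G}^*\times\mathcal{G}^*$, but the mixed products $g^*h$ with $g,h\in\mathcal{G}$ are not controlled by the given hypothesis. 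Equivalently, in the quotient you need $\Phi_i(g^*g)=\Phi_i(g)^*\Phi_i(g)$ to invoke Choi's criterion, and knowing only $\Phi_i(g^2)=\Phi_i(g)^2$ does not give this for non-self-adjoint $g$. The gap vanishes when every $g\in\mathcal{G}$ is self-adjoint---as in the ``Moreover'' clause and in every use of this lemma in the paper (e.g.\ the $g_{M,j}$ in Proposition~\ref{Pspsub} are real-valued)---since then $g^*g=g^2$ and your multiplicative-domain step goes through directly without any WLOG. For genuinely non-self-adjoint $\mathcal{G}$ one must either add that hypothesis or supply a separate argument for the mixed products. Your treatment of the ``Moreover'' part is fine as written, precisely because the coordinate generators there are self-adjoint.
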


The following is an easy fact and follows from  the proof of \cite[Corollary 2.13]{Linself}.

\begin{cor}{\rm (\cite[Corollary 2.13]{Linself})}\label{C213}
Fix $n\in \N,$  $0<\eta<1,$   
 and $\Sigma=\{\xi_1, \xi_2,...,\xi_m\}\subset S^n\setminus \{\zeta^{np}\}$ which is 
 $\eta$-dense in $S^n.$
 Let $\{r,x_1, \dots, x_n\}$ be the generating set of $C(S^n)$ as in Remark~\ref{RemSn}.
There exists $\dt(n, \eta)>0$ satisfying the following:
Suppose that $h_1, h_2,...,h_n$ form an $n$-tuple of densely defined 
(unbounded) self-adjoint operators
and $L: 
C(S^n)\to B(H)$ is a unital c.p.c. map
such that, for $i,j=1,2,...,n,$ 
\beq\label{Ctheta-1}
&&\hspace{-0.5in}\|L(x_ix_j)-L(x_i)L(x_j)\|<\dt, \quad  \|L(x_ir)-L(x_i)L(r)\|<\dt,  \\
&& \|L(x_j)-2(1+d)^{-1/2}h_j(1+d)^{-1/2}\|<\dt \,
\\
&&
\andeqn \|L(r)-(-1+d)(1+d)^{-1}\|<\dt.
\eneq
Then, for all $\xi_k\in  \Sigma,$ 
\beq
\|L( \Theta^S_{\xi_k, \eta})-\Theta^S_{\xi_k, \eta}(h_1,h_2,...,h_n)\|<\eta, \,\,\, 1\le k\le m.
\eneq

\end{cor}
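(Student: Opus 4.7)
The strategy is to apply the comparison Lemma~\ref{Lpurb} to two unital c.p.c.\ maps $\phi_1,\phi_2:C(S^n)\to B(H)$: I take $\phi_1=L$ and construct $\phi_2$ so that $\phi_2(r)\approx A:=(-1+d)(1+d)^{-1}$ and $\phi_2(x_j)\approx B_j:=2\bar h_j$, and so that $\phi_2$ is approximately multiplicative on the generating set ${\cal G}=\{1,r,x_1,\ldots,x_n\}$. Feeding the finite test family ${\cal F}=\{\Theta^S_{\xi_k,\eta}:1\le k\le m\}$ (finite since $\Sigma$ is an $\eta$-dense net, with $m$ bounded in terms of $n$ and $\eta$) together with the error $\eta$ into Lemma~\ref{Lpurb} produces a tolerance $\sigma=\sigma(n,\eta)>0$: any two unital c.p.c.\ maps from $C(S^n)$ that are $\sigma$-multiplicative on ${\cal G}$ and $\sigma$-close on ${\cal G}$ are automatically $\eta$-close on each $\Theta^S_{\xi_k,\eta}$.

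For $\phi_1=L$, the required $\sigma$-multiplicativity on all ordered pairs from ${\cal G}$ (once $\delta$ is taken well below $\sigma$) reduces to the first two commutator hypotheses; note that $\|L(rx_i)-L(r)L(x_i)\|<\delta$ follows from $\|L(x_ir)-L(x_i)L(r)\|<\delta$ by taking adjoints (since $r,x_i,L(r),L(x_i)$ are all self-adjoint), while the pair $r\cdot r$ is recovered from $r^2=1-\sum_j x_j^2$ combined with the $x_ix_j$ bound. For $\phi_2$, I apply (the proof of) Lemma~\ref{Ls2app} to the data $(a,b_1,\ldots,b_n)=(A,B_1,\ldots,B_n)$. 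By Lemma~\ref{Linqx} one has $-1\le A\le 1$ and $\|B_j\|\le 1$; each $B_j$ is self-adjoint from the symmetric definition $\bar h_j=(1+d)^{-1/2}h_j(1+d)^{-1/2}$; and the commutator bounds $\|[A,B_j]\|,\|[B_i,B_j]\|=O(\delta)$ follow from $\|L(r)-A\|,\|L(x_j)-B_j\|<\delta$ together with the $L$-multiplicativity. The one non-exact hypothesis is the spherical relation $\sum_j B_j^2=1-A^2$, which holds only up to $O(\delta)$ (read off from $r^2+\sum_j x_j^2=1$ and $L$-multiplicativity). I expect this to be the main obstacle: while Lemma~\ref{Ls2app} is phrased with an exact relation, its Choi--Effros ultraproduct argument reads the relations off inside the quotient $\prod_m A_m/\bigoplus_m A_m$, in which any $O(\delta_m)$-defect with $\delta_m\to 0$ disappears and the exact defining relation of $C(S^n)$ is recovered. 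With this mild relaxation absorbed, Lemma~\ref{Ls2app} (or its strengthening Lemma~\ref{Ls2app2}) delivers the required $\phi_2$ with the additional $\sigma$-multiplicativity on ${\cal G}$.

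Both $\phi_1$ and $\phi_2$ now satisfy the hypotheses of Lemma~\ref{Lpurb}, which yields $\|L(\Theta^S_{\xi_k,\eta})-\phi_2(\Theta^S_{\xi_k,\eta})\|<\eta/2$ for every $k$. Since $\phi_2$ is approximately a \hm\ sending the coordinate generators close to $A,B_1,\ldots,B_n$, the ``Moreover'' clause of Lemma~\ref{Lpurb} applied to the one-variable continuous functions $\theta_{t,\eta}\in C_0([-1,1])$ (one for each coordinate of each $\xi_k\in\Sigma$) identifies $\phi_2(\Theta^S_{\xi_k,\eta})$ factor-by-factor with $\theta_{t_0,\eta}(A)\theta_{t_1,\eta}(B_1)\cdots\theta_{t_n,\eta}(B_n)=\Theta^S_{\xi_k,\eta}(h_1,\ldots,h_n)$ up to another $\eta/2$. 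The triangle inequality then closes the argument, and tracing back through the estimates pins down the admissible $\delta=\delta(n,\eta)$.
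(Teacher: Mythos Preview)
Your approach is substantially different from the paper's. The paper's proof is three sentences: it observes that $\Theta^S_{\xi_k,\eta}(h_1,\ldots,h_n)=\Theta_{\xi_k,\eta}(A,2\bar h_1,\ldots,2\bar h_n)$ with $A=(-1+d)(1+d)^{-1}$, verifies the norm bounds $\|A\|\le 1$ and $\|2\bar h_j\|\le 1$ (the latter via part (v) of Lemma~\ref{Linqx}), and then invokes Corollary~2.13 of \cite{Linself} directly with $Y=S^n\subset\I^{n+1}$ and $a_0=A,\,a_j=2\bar h_j$. No auxiliary map $\phi_2$ is constructed and Lemma~\ref{Ls2app} is not used at all. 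Your route---manufacturing a second c.p.c.\ map $\phi_2$ via Lemma~\ref{Ls2app} and then comparing $L$ to $\phi_2$ through Lemma~\ref{Lpurb}---is a self-contained reconstruction of the cited external result rather than an application of it.

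There is, however, a genuine gap in your argument. You claim that the pair $r\cdot r$ is ``recovered from $r^2=1-\sum_j x_j^2$ combined with the $x_ix_j$ bound.'' This does not work. From the $x_jx_j$ bound and unitality you obtain
\[
L(r^2)=1-\sum_j L(x_j^2)\approx_\delta 1-\sum_j L(x_j)^2,
\]
but to conclude $\|L(r^2)-L(r)^2\|=O(\delta)$ you would still need $L(r)^2+\sum_j L(x_j)^2\approx 1$, i.e.\ $A^2+\sum_j B_j^2\approx 1$. The exact relation in Definition~\ref{Dabj} is $\sum_j b_j^*b_j+a^2=1$ for $b_j=2h_j(1+d)^{-1}$, \emph{not} for $B_j=2\bar h_j$; the discrepancy is governed by $\|\tilde h_j-\bar h_j\|$, which is controlled only by commutator hypotheses on the $h_i$ (Lemmas~\ref{Lappcom-1n}, \ref{Lemtildehjbarh}) that are \emph{not} among the hypotheses of this corollary. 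Consequently you cannot feed $\phi_1=L$ into Lemma~\ref{Lpurb} with ${\cal G}=\{1,r,x_1,\ldots,x_n\}$, since the required $\|\phi_1(r^2)-\phi_1(r)^2\|<\sigma$ is unavailable; and you cannot drop $r$ from ${\cal G}$, since then $\Theta^S_{\xi_k,\eta}\notin C^*({\cal G})$. The same missing ingredient also undermines your appeal to Lemma~\ref{Ls2app} for $\phi_2$: without $A^2+\sum_jB_j^2\approx 1$ the approximate sphere relation fails. In the paper's actual uses of this corollary (Propositions~\ref{PextspMs} and~\ref{Pspsub}) the map $L$ arises from Corollary~\ref{Cs2app2} and is approximately multiplicative on a larger set including $r\cdot r$, so the issue is moot in context---but your derivation as written does not go through from the hypotheses as stated.
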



\begin{proof}
By \eqref{eq:ThetaSh}, we have $\Theta^S_{\xi_k, \eta}(h_1, \ldots, h_n)=\Theta_{\xi_k, \eta}((-1+d)(1+d)^{-1}, 2\bar h_1, \ldots, 2\bar h_n).$
Note that $\|(-1+d)(1+d)^{-1}\|\le 1$ and $\|2\bar h_j\|\le 1$. 
In fact, the first inequality follows from 
$(-1+d)(1+d)^{-1}=1-2(d+1)^{-1}$. The second inequality follows from (v) of Lemma \ref{Linqx}. 

Then the result follows by applying Corollary 2.13 of \cite{Linself}, for $Y=S^n\subset \I^{n+1}$ and $a_1=(-1+d)(1+d)^{-1}, a_2=2\bar h_1, \ldots, a_{n+1}=2\bar h_n$. 
\end{proof}

\begin{df}\label{Dspherical}
Suppose that $h_1, h_2,...,h_n$ form  an $n$-tuple of  
self-adjoint  densely defined operators on an infinite dimensional separable Hilbert space
$H$ as in Definition~\ref{Dntuple}.   
Define $d=\sum_{j=1}^n h_j^2.$
%
Let ${\cal F}_g=\{r, x_1, x_2,...,x_{n}\}\subset C(S^n)$
be as in Remark~\ref{RemSn}. 
Note that $r^2+\sum_{j=1}^n x_j^2=1.$

Fix $\eta>0$ and $M\ge 1.$ 
Suppose that $X\subset S^n$ is a compact subset
and $\phi: C(X)\to B(H)$ is a unital c.p.c. map
such that 
\beq
%
\hspace{-2.1in}{\rm (i_1)} &&\|\phi(r|_X)-(-1+d)(1+d)^{-1}\|<\eta,\,\,\, \\
\hspace{-2.1in}{\rm (i_2)}&& \|\phi(x_j|_X)- 2(1+d)^{-1/2}h_j(1+d)^{-1/2}\|<\eta,\\
\hspace{-2.1in}{\rm (i_3)}&&\|\phi(g_{M,j}|_X)-h_je_M(d^{1/2})\|<\eta,\\
\hspace{-2.1in}{\rm (ii)}&&\|\phi((fg)|_X)-\phi(f|_X)\phi(g|_X)\|<\eta \rforal f, g\in {\cal F}_g
\\
\andeqn\\
\hspace{-2.1in}{\rm (iii)} &&  \|\phi(f)\|\ge 1-\eta
\eneq
for any $f\in C(X)_+$  which has 
value $1$ on an open ball with the center $x\in X$ (for some $x$) and the radius $\eta.$
Then we say that $X$ is an $M$-$\eta$-near-spherical spectrum of the $n$-tuple  $h_1, h_2,..., h_n.$ 
We write 
$$
n{\rm Sp}_{S^n, M}^\eta(h_1, h_2,...,h_n; {\cal F}_g):=X.
$$

\end{df}

The following is similar to \cite[Proposition 2.11]{Linself}.

\begin{prop}\label{PextspMs}
Let $0<\eta<1/4,$ $n\in \N$ and $M>1.$ Let ${\cal F}_g$ be a finite generating subset of $C(S^n)$
and ${\cal F}\subset C(S^n)$ be any finite subset containing ${\cal F}_g.$
There exists $\dt>0$ satisfying the following:
For any $n$-tuple self-adjoint operators $h_1,h_2,...,h_n$ densely defined on an infinite dimensional 
separable Hilbert space $H$ such that
\beq\label{Pextsp-1}
&&\|[\tilde h_i,\,\tilde h_j]\|<\dt,\,\,\,\|\tilde h_i-\tilde h_i^*\|<\dt\tand\\\label{Pextsp-2}
&&{\rm sp}(d^{1/2})\cap [0, M]\not=\emptyset,
\eneq
where $\tilde h_j=h_j(1+d)^{-1}$ and $d=\sum_{i=1}^n h_j^2,$ $i,j=1,2,...,n,$ 
the following two properties are satisfied:

{\rm (i)} $n{\rm Sp}^\eta_{S^n, M}(h_1, h_2,...,h_n; {\cal F})\cap S_M
\not=\emptyset,$ where $S_M=\Phi_{S^n}(\{\zeta\in \R^n: \|\zeta\|_2\le M\});$

{\rm (ii)} $S{\rm Sp}_M^\eta((h_1, h_2,...,h_n))\not=\emptyset.$
\end{prop}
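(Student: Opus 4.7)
The strategy is to combine Corollary~\ref{Cs2app2} with Lemma~\ref{Lnsp} to produce a candidate near-spherical spectrum $X$, use the hypothesis $\mathrm{sp}(d^{1/2})\cap[0,M]\neq\emptyset$ to force $X\cap S_M\neq\emptyset$, and finally deduce (ii) from (i) via Corollary~\ref{C213}.

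For (i), I would first apply Lemma~\ref{Lnsp} with tolerance $\eta$, test set $\mathcal{F}$, and (via its ``moreover'' clause) generating subset $\mathcal{G}=\mathcal{F}_g$, obtaining an auxiliary threshold $\delta_1>0$. Then I feed $\eta_1:=\min\{\delta_1,\eta/8\}$ together with $\mathcal{F}$ into Corollary~\ref{Cs2app2} to obtain the final $\delta>0$. Under (\ref{Pextsp-1}), Corollary~\ref{Cs2app2} then supplies a c.p.c.\ map $\phi_0:C(S^n)\to B(H)$ which is $\eta_1$-multiplicative on $\mathcal{F}$ and satisfies (\ref{Cs2app2-1})--(\ref{Cs2app2-5}). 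Feeding $\phi_0$ into Lemma~\ref{Lnsp} now yields a compact $X\subset S^n$ and c.p.c.\ map $\phi:C(X)\to B(H)$ verifying the near-spectrum properties of Definition~\ref{Dnsp}; chaining these with the Corollary~\ref{Cs2app2} approximations immediately gives conditions (i$_1$)--(iii) of Definition~\ref{Dspherical}, so $X$ is an $M$-$\eta$-near-spherical spectrum.

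The main obstacle will be verifying $X\cap S_M\neq\emptyset$. The spectral hypothesis combined with $e_M\equiv 1$ on $[-M,M]$ forces $\|e_M(d^{1/2})\|=1$, hence (\ref{Cs2app2-4}) yields $\|\phi_0(e_M^{S^n})\|\ge 1-\eta_1$. Since $\{e_M^{S^n}=1\}=S_M$, one expects a point of $X$ to lie in $S_M$, but finite-stage approximation errors do not yield exact membership directly. I would therefore argue by contradiction in the spirit of the proof of Lemma~\ref{Lnsp}: assuming the conclusion fails for every $\delta_m=1/m$, choose witnesses $\{h_{1,m},\ldots,h_{n,m}\}$ on Hilbert spaces $H_m$, form the associated $\phi_{0,m}$, and pass to the quotient $Q=\prod_m B(H_m)/\bigoplus_m B(H_m)$. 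The approximate multiplicativity on $\mathcal{F}_g$ becomes exact in $Q$, producing a $*$-homomorphism $\Phi:C(S^n)\to Q$ whose kernel is $\{f\in C(S^n):f|_{X^\infty}=0\}$ for some compact $X^\infty\subset S^n$. The spectral hypothesis gives $\|\Phi(e_M^{S^n})\|=\lim_m\|e_M(d_m^{1/2})\|=1$, so $\sup_{X^\infty}e_M^{S^n}=1$, and compactness of $X^\infty$ produces $\xi^\infty\in X^\infty\cap S_M$. Lifting $\Phi$ through $C(X^\infty)$ via Choi--Effros to c.p.c.\ maps $\widetilde\Phi_m$, and using compactness of $X^\infty$ to reduce condition (iii) of Definition~\ref{Dspherical} to finitely many bump functions handled uniformly in $m$, one verifies that $X^\infty$ serves as a valid $M$-$\eta$-near-spherical spectrum for $\{h_{j,m}\}$ at all sufficiently large $m$, contradicting the assumed failure.

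For (ii), apply (i) to obtain $X$ and $\xi_0\in X\cap S_M$. Select $x_j\in\Sigma_M$ within a small multiple of $\eta$ of $\xi_0$, which is arrangeable via the uniform-continuity bound $G_M$ of Definition~\ref{Mdelta} (possibly by shrinking the underlying net's mesh to absorb the $G_M$ distortion). Then $\Theta^S_{x_j,\eta}|_X$ is identically $1$ on a small ball around $\xi_0$, so property (iii) of the near-spectrum gives $\|\phi(\Theta^S_{x_j,\eta}|_X)\|\ge 1-\eta$. Chaining through the approximate multiplicativity of $\phi_0$ and invoking Corollary~\ref{C213} (which provides $\|\phi_0(\Theta^S_{x_j,\eta})-\Theta^S_{x_j,\eta}(h_1,\ldots,h_n)\|<\eta_1$), I conclude $\|\Theta^S_{x_j,\eta}(h_1,\ldots,h_n)\|\ge 1-\eta$ once the initial tolerances are chosen small enough. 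Hence $x_j$ enters the index set in (\ref{DSSPM}), so $\Phi_{S^n}^{-1}(x_j)\in S\mathrm{Sp}_M^\eta((h_1,\ldots,h_n))$, establishing non-emptiness.
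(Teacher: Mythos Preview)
Your overall strategy---Corollary~\ref{Cs2app2} to produce an almost-multiplicative $\phi_0$, Lemma~\ref{Lnsp} to extract a compact $X$ with c.p.c.\ map $\phi$, then Corollary~\ref{C213} for (ii)---matches the paper. Two points.

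First, a small gap: you invoke Lemma~\ref{Lnsp} with test set $\mathcal{F}$, but verifying condition (i$_3$) of Definition~\ref{Dspherical} requires the chain $\phi(g_{M,j}|_X)\approx\phi_0(g_{M,j})\approx h_je_M(d^{1/2})$, and the first approximation is only available if $g_{M,j}$ belongs to the test set. The paper therefore works with the enlargement $\mathcal{F}_1=\mathcal{F}\cup\{r,e_M^{S^n},x_j,g_{M,j}:1\le j\le n\}$.

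Second, once $e_M^{S^n}\in\mathcal{F}_1$, your sequential/quotient contradiction argument for $X\cap S_M\neq\emptyset$ is unnecessary. The paper handles this directly: the chaining yields $\|\psi(e_M^{S^n}|_X)-e_M(d^{1/2})\|<\eta_1+\eta_0<1/2$, and since the spectral hypothesis forces $\|e_M(d^{1/2})\|=1$, one gets $e_M^{S^n}|_X\neq0$ immediately, so $X$ meets the support of $e_M^{S^n}$. Your Choi--Effros/quotient argument would work, but it re-derives what Lemma~\ref{Lnsp} already packaged.

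For (ii) the paper proceeds as you sketch, but rather than reusing the output of (i) it reruns the entire construction at a finer tolerance $\eta_4$ (chosen small enough that $\Sigma_M$ is sufficiently dense, the bump functions $f_j$ lie inside $S_M$, and Corollary~\ref{C213} applies with error $\eta/4$); this choice of constants up front absorbs the $G_M$ distortion you mention without separate handling.
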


\begin{proof}
Let us prove part (i) first. 

Fix $0<\eta<1/4,$ $n\in \N$ and $M>1.$
Put $\Omega=S^n$ and 
choose $\eta_0=\eta/3.$  
Let ${\cal F}_1={\cal F}\cup \{r, e^{S^n}_M,  x_j, e_{M,j}, 1\le j\le n\}$ (where 
$r, e^{S^n}_M, e_{M,j}$ are as in  Definition          \ref{dfMR}).

 Choose $\dt_1>0$ (as $\dt$)  and a finite subset 
${\cal G}\subset C(S^n)$ be as given by Lemma~\ref{Lnsp} corresponding to 
$\eta_0$ (in stead of $\eta$) and ${\cal F}_1$ (instead of ${\cal F}$).   
As in the ``Moreover" part of Lemma~\ref{Lnsp}, we may 
choose ${\cal G}={\cal F}_g$ (with possibility of smaller $\dt_1$).

Let $\eta_1=\min\{\eta_0, \dt_1,1/4\}> 0.$ 

To apply Corollary \ref{Cs2app2}, choose $\dt$ be as given by Corollary \ref{Cs2app2}
for $\ep=\eta_1$  and $\eta_1$ (in place of $\eta$).

Now suppose that $h_1, h_2,...,h_n$ are given which satisfy condition \eqref{Pextsp-1} and \eqref{Pextsp-2}.
Keep the notation $a=(-1+d)(1+d)^{-1},$ $\bar h_j=(1+d)^{-1/2}h_j(1+d)^{-1/2},$ $j=1,2,\ldots, n,$ as in Definition~\ref{Dabj} and Lemma~\ref{Lemtildehjbarh}.
By applying Corollary \ref{Cs2app2}, we obtain a c.p.c. map
$\phi: C(S^n)\to A$ such that
\beq
\|\phi(fg)-\phi(f)\phi(g)\|<\eta_1\rforal f,g\in {\cal F}_g
\eneq
together with \eqref{Cs2app2-1}, \eqref{Cs2app2-2}, \eqref{Cs2app2-3},  \eqref{Cs2app2-4}, \and
\eqref{Cs2app2-5}
but replacing $\eta$ by $\eta_1.$ 
Applying Lemma~\ref{Lnsp}, we obtain 
a compact subset $X\subset S^n$ and a c.p.c. map
$\psi: C(X)\to A$ such that
\beq\label{Pextsp-15}
%
\hspace{-2.1in}{\rm (i)} &&\|\phi(f)-\psi(f|_X)\|<\eta_0<\eta \rforal f\in {\cal F}_1, \\
\hspace{-2.1in}{\rm (ii)}&&\|\psi((fg)|_X)-\psi(f|_X)\psi(g|_X)\|<\eta_0<\eta\rforal f, g\in {\cal F}_1,\andeqn\\
\hspace{-2.1in}{\rm (iii)} &&  \|\psi(g|_X)\|\ge 1-\eta_0\ge 1-\eta
\eneq
for any $g\in C(X)_+$  which has 
value $1$ on an open ball with the center $x\in X$ (for some $x$) and the radius $\eta_0.$

By \eqref{Cs2app2-1}, \eqref{Cs2app2-3}  and \eqref{Pextsp-15},
\beq
&&\|\psi(r|_X)-(-1+d)(1+d)^{-1}\|<\eta_1+\eta_0<\eta,\\
&&\|\psi(x_j|_X)-2h_j(1+d)^{-1}\|<\eta_1+\eta_0<\eta,\,\,1\le j\le n,\\
&&\|\psi(g_{M,j}|_X)-h_je_M(d^{1/2})\|<\eta_1+\eta_0<\eta,\,\, 1\le j\le n.
\eneq

Also, by \eqref{Cs2app2-4} and \eqref{Pextsp-15},
\beq
&&\hspace{-1.2in}\|\psi(e_M^{S^n}|_X)-e_M(d^{1/2})\|\le \|\psi(e_M^{S^n}|_X)-\phi(e_M^{S^n})\|\\
&&\hspace{0.4in}+\|\phi(e_M^{S^n})-e_M(d^{1/2})\|<\eta_1+\eta_0<1/2.
\eneq
Set $S_M=\{\zeta\in S^n:  \|\Phi_{S^n}^{-1}(\zeta)\|_2\le M\}.$
By \eqref{Pextsp-2}, which implies $\|e_M(d^{1/2})\|=1$, we have that $\psi(e_M^{S^n}|_X)\not=0.$ 
Hence 
\beq
(X\setminus \zeta^{np})\cap S_M\not=\emptyset.
\eneq
In other words,
$$n{\rm Sp}^\eta(h_1,h_2,...,h_n, {\cal F}_1)\setminus \{\zeta^{np}\}\not=\emptyset.$$
This proves (i).

Before we prove part (ii), we also note that
\beq\label{Pextsp-25}
\|\psi(f_{M,j})-h_j e_M(d^{1/2})\|<\eta_1+\eta,\,\,\, j=1,2,...,n.
\eneq

To prove (ii), consider $\Phi_{S^n}: \R^n\to S^n\setminus \{\zeta^{np}\}$ 
and its inverse $\Phi_{S^n}^{-1}$ in Definition~\ref{DR2toS2}.

Recall $S_M=\{\zeta\in S^n:  \|\Phi_{S^n}^{-1}(\zeta)\|_2\le M\}$ and
set $P_M=\{\xi\in \R^n: \|\xi\|_2\le M\}.$ So $\Phi_{S^n}(P_M)=S_M.$

For any $0<\eta<1/4,$  there is $0<\eta_2<\eta/64$ such that
every closed ball with center in $\{\xi\in \R^n: \|\xi\|_2\le M\}$ and radius $\eta/16$ contains 
$\Phi_{S^n}^{-1}(B)$ for some closed ball $B$ with center in $S_M$ and radius $\eta_2.$ 

Let $D_M^\eta$ be a finite $\eta$-dense subset of $P_M$ and 
$\Sigma_M:=\Phi_{S^n}(D_M^\eta)=\{\xi_1, \xi_2,...,\xi_m\}\subset  S_M$ 
be 
such that it is $\eta_2/16$ dense in $S_M.$
For each $j\in \{1,2,...,m\},$ define $f_j\in C(S^n)$ with 
$0\le f_j\le 1,$ ${f_j}|_{B_j}=1,$ and $f_j(\xi)=0$ if 
$\|\xi-\xi_j\|_2\ge \eta_2/8,$ where $B_j$ is the ball (of $S^n$) with center at $\xi_j$ and radius $\eta_2/16,$
$j=1,2,...,m.$ 

Choose a finite subset ${\cal F}$ of $C(S^n)$ which contains ${\cal F}_g$ and 
$\{f_j: 1\le j\le m\}.$ 
Choose an $\eta_3/2$-dense subset  $\Sigma$ of $S^m$ such that $\Sigma_M\subset \Sigma.$ 

Choose $\eta_3>0$ as $\dt$ in Corollary \ref{C213} for $\eta/4$ (instead of $\eta$)  and 
$\Sigma.$

Let $\eta_4=\min\{\eta_2/4, \eta_3/4\}.$ 

For this $0<\eta_4,$ by the first part of the proof, we obtain $\dt>0,$ 
when \eqref{Pextsp-1} and \eqref{Pextsp-2} hold for $\dt,$ there is a compact subset 
$X\subset S^n$ and a c.p.c. map
$\psi: C(X)\to A$ such that
\beq
&&\|\psi(r|_X)-(-1+d)(1+d)^{-1}\|<\eta_4,\\
&&\|\psi(x_j|_X)-2(1+d)^{-1/2}h_j(1+d)^{-1/2}\|<\eta_4,\,\,1\le j\le n,\\
&&\|\psi(g_{M,j}|_X)-h_je_M(d^{1/2})\|<\eta_4,\,\,1\le j\le n,\\
&&\|\psi(e_M^{S^n}|_X)-e_M(d^{1/2})\|<\eta_4\\\label{Pextsp-35}
&&\|\psi(fg|_X)-\psi(f|_X)\psi(g|_X)\|<\eta_4\rforal f,g\in {\cal F}_1\andeqn\\\label{Pextsp-36}
&&\|\psi(g)\|\ge 1-\eta_4
\eneq
for any $g\in C(X)_+$  which has 
value $1$ on an open ball with the center $x\in X$ (for some $x$) and the radius $\eta_4.$

Let $L: C(S^n)\rightarrow A$ be the composition $\psi\circ q,$ where $q: C(S^n)\rightarrow C(X)$ is the quotient map defined by restriction to $X$.
By applying  Corollary \ref{C213} and \eqref{Pextsp-35}, 
we obtain that 
\beq\label{Pextsp-40}
\|L(\Theta^S_{\xi_j, \eta/4})-\Theta^S_{\xi_j, \eta/4}(h_1,h_2,...,h_n)\|<\eta/4
\eneq
for any $1\le j\le m.$   
Therefore, by the choice of $\eta_2,$ $\Theta_{\xi_j, \eta/4}\ge f_j$ 
for some $j\in \{1,2,...,m\},$ where $f_j^{-1}(1)$ contains an open ball of radius $\eta_4$ in $X.$
By \eqref{Pextsp-36},
\beq
\|L(\Theta^S_{\xi_j, \eta/4})\|=\|\psi(\Theta^S_{\xi_j, \eta/4}|_X)\|\ge \|\psi(f_j)\|\ge 1-\eta_4.
\eneq
Hence, by \eqref{Pextsp-40},
\beq
\|\Theta_{\xi_j, \eta/4}^S(h_1,h_2,...,h_n)\|\ge 1-\eta_4-\eta/4>1-\eta.
\eneq
Hence, $\|\Theta^S_{\xi_j, \eta}(h_1,h_2,...,h_n)\|\ge 1-\eta$
for some $1\le j\le m.$
This implies  $s{\rm Sp}_{S^n, M}^\eta((h_1, h_2,...,h_n))\not=\emptyset, $ whence
\beq
S{\rm Sp}_M^{\eta}((h_1, h_2,...,h_n))\not=\emptyset.
\eneq
\end{proof}

\begin{cor}\label{Pextsp}
Let $\eta>0,$ 
$n\in \N$ 
and $M>1.$ 
Let ${\cal F}_g$ be a finite subset of $C(S^n)$ which contains $r,$ $x_1,x_2,..., x_n$
as in Remark~\ref{RemSn}.
There exists $\dt>0$ satisfying the following:
For any $n$-tuple self-adjoint operators $h_1,h_2,...,h_n$ densely defined on an infinite dimensional 
separable Hilbert space $H$ such that
\beq\label{Pextsp-1'}
\|h_ih_j-h_jh_i\|<\dt,\,\,\, i,j=1,2,...,n\tand {\rm sp}(d^{1/2})\cap [0, M]\not=\emptyset,
\eneq
(where $d=\sum_{i=1}^n h_i^2$), then 

{{\rm(i)}} $S{\rm Sp}_M^\eta((h_1, h_2,...,h_n))\not=\emptyset,$

{{\rm(ii)}} $n{\rm Sp}^\eta_{S^n,M}(h_1, h_2,...,h_n; {\cal F}_g)\cap S_M
\not=\emptyset.$

\end{cor}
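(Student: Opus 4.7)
The plan is to derive this corollary as an immediate combination of Proposition \ref{PextspMs} with Lemma \ref{Lappcom-1n}. The corollary differs from the proposition only in that it imposes the more natural and stronger hypothesis $\|h_i h_j - h_j h_i\| < \delta$ on the original unbounded operators, instead of the corresponding bound on the bounded transforms $\tilde h_j = h_j(1+d)^{-1}$. Lemma \ref{Lappcom-1n} is designed precisely to effect this translation.

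First, given $\eta > 0$, $n$, $M > 1$, and $\mathcal{F}_g$, I would invoke Proposition \ref{PextspMs} to obtain a constant $\delta_0 > 0$ such that whenever the bounded transforms satisfy $\|[\tilde h_i, \tilde h_j]\| < \delta_0$ and $\|\tilde h_i - \tilde h_i^*\| < \delta_0$ for all $i,j$, and provided ${\rm sp}(d^{1/2}) \cap [0,M] \neq \emptyset$, both conclusions of Proposition \ref{PextspMs} (which match our conclusions (i) and (ii)) hold.

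Next, to convert a commutator bound on the $h_i$ into the required bounds on the $\tilde h_j$, I would apply Lemma \ref{Lappcom-1n}. Recall that $b_j = 2\tilde h_j$, so $\|[\tilde h_i,\tilde h_j]\| = \tfrac{1}{4}\|[b_i,b_j]\|$ and $\|\tilde h_j - \tilde h_j^*\| = \tfrac{1}{2}\|b_j - b_j^*\|$. Thus, applying the lemma with parameter $\eta' = 2\delta_0$, we see that the hypothesis $\|h_i h_j - h_j h_i\| < \eta'/(6n) = \delta_0/(3n)$ yields $\|[\tilde h_i,\tilde h_j]\| < \delta_0/2 < \delta_0$ and $\|\tilde h_j - \tilde h_j^*\| < \delta_0$, exactly what Proposition \ref{PextspMs} requires.

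Finally, setting $\delta := \delta_0/(3n)$ closes the argument: any $n$-tuple satisfying \eqref{Pextsp-1'} falls within the scope of Proposition \ref{PextspMs}, and the two conclusions follow at once. Since all the substantive work—the $C^*$-algebraic lifting, the construction of the near-spherical spectrum, and the passage through the stereographic map $\Phi_{S^n}$—has already been carried out in the proofs of Proposition \ref{PextspMs} and Lemma \ref{Lappcom-1n}, I do not expect any genuine obstacle here; the corollary is a clean repackaging designed to state the conclusion in terms of the more natural (unbounded) commutator hypothesis.
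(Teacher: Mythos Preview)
Your proposal is correct and follows essentially the same route as the paper: obtain $\delta_0$ from Proposition~\ref{PextspMs}, then use Lemma~\ref{Lappcom-1n} to convert the commutator bound on the $h_i$ into the required bounds on the $\tilde h_j$, and finally choose $\delta$ as a suitable fraction of $\delta_0/n$. The paper takes $\delta=\delta_0/(12n)$ rather than your $\delta_0/(3n)$, but this is an immaterial difference in bookkeeping; the logical structure is identical.
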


\begin{proof}
Fix $\eta>0,$ $n\in \N$ and $M>1.$
Choose $\dt_0:=\dt(\eta, n, M)>0$ as given by Lemma \ref{PextspMs}.
Choose $\dt=\dt_0/12n.$  Suppose that \eqref{Pextsp-1'} holds.
By Lemma \ref{Lappcom-1n}, we have
\beq
&&\|[\tilde h_i,\,\tilde h_j]\|<\dt_0,\,\,\,\|\tilde h_i-\tilde h_i^*\|<\dt_0,\,\,\, i,j=1,2,...,n.
\eneq
We also have  ${\rm sp}(d^{1/2})\cap [0, M]\not=\emptyset.$
The corollary then follows by applying Lemma \ref{PextspMs}.
\end{proof}

In the following statement, define
\beq\label{DfbM}
S_M:=\Phi_{S^m}(\{\zeta\in \R^m: \|\zeta\|_2\le M\}).
\eneq

\begin{prop}{\rm (\cite[Proposition 2.15]{Linself})}\label{Pspsub}
Fix $m\in \N.$ For any $\eta>0,$ $M>1$ such that $S_M=\Phi_{S^{m}}(\{\xi\in \R^{m}: \|\xi\|_2\le M\})$ is $\eta/4$-dense in $S^m,$  and 
a finite generating subset ${\cal F}_g\subset C(S^m)$
which contains $r,$ $x_1,x_2,..., x_m,$ there exists $\dt(\eta, M)>0$ satisfying the following:

Suppose that  $h_1, h_2,...,h_m$ form an $m$-tuple of (densely defined) self-adjoint operators 
on $H$ 
such that $(h_1, h_2,...,h_m)$ has 
an $M$-$\dt$-near-spherical spectrum $X=n{\rm Sp}_{S^m, M}^\dt((h_1, h_2,...,h_m; {\cal F}_g))$ 
such that $X\setminus \{\zeta^{np}\}\not=\emptyset.$
Then 

{\rm 1)}. $Z=S{\rm Sp}^\eta_{M}((h_1,h_2,...,h_m))\not=\emptyset.$ 

{\rm 2)}.
If 
$Y$ is also a non-empty  $M$-$\dt$-near-spherical spectrum of $(h_1, h_2,...,h_m),$
then 
\beq
d_H(X, Y)<\eta\andeqn  X, Y \subset \Phi_{S^m}(Z) \subset X_{3\eta}.
\eneq
\end{prop}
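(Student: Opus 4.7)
The plan is to use Corollary~\ref{C213} as the bridge between the externally-witnessed near-spherical spectra $X$, $Y$ and the intrinsic synthetic spectrum $Z$, so that both assertions reduce to tracking which representative points $\xi_j\in\Sigma_M:=\Phi_{S^m}(D_M^\eta)$ satisfy $\|\Theta^S_{\xi_j,\eta}(h_1,\ldots,h_m)\|\ge 1-\eta$. First I refine $\Sigma_M$ to be $\eta/8$-dense in $S_M$ (permitted since the hypothesis already gives $\eta/4$-density of $S_M$ in $S^m$), and then choose $\dt=\dt(\eta,M)>0$ small enough that whenever $X$ is an $M$-$\dt$-near-spherical spectrum with accompanying c.p.c.\ map $\phi_X:C(X)\to B(H)$, the composition $L_X=\phi_X\circ q:C(S^m)\to B(H)$ (with $q$ the restriction quotient onto $C(X)$) satisfies the hypotheses of Corollary~\ref{C213} with tolerance $\eta/8$. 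In particular $\|L_X(\Theta^S_{\xi_j,\eta})-\Theta^S_{\xi_j,\eta}(h_1,\ldots,h_m)\|<\eta/8$ for every $\xi_j\in\Sigma_M$, and similarly for $L_Y$.

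For Part 1) and $X\setminus\{\zeta^{np}\}\subset\Phi_{S^m}(Z)$: take $x\in X\setminus\{\zeta^{np}\}$ and pick $\xi_j\in\Sigma_M$ with $\|x-\xi_j\|_2<\eta/8$. Choose $f\in C(X)_+$ with $0\le f\le 1$, supported in a small open ball about $x$ and equal to $1$ on an even smaller ball. Condition (iii) of Definition~\ref{Dspherical} yields $\|\phi_X(f)\|\ge 1-\dt$. Since $\Theta^S_{\xi_j,\eta}|_X\ge f$ once $\dt$ is small enough compared to $\eta$, positivity gives $\|L_X(\Theta^S_{\xi_j,\eta})\|\ge 1-\dt$, and the approximation above upgrades this to $\|\Theta^S_{\xi_j,\eta}(h_1,\ldots,h_m)\|\ge 1-\eta$. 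Thus $\xi_j$ contributes to $Z$, and $x\in\overline{D(\xi_j,\eta)}\subset\Phi_{S^m}(Z)$; in particular $Z\ne\emptyset$. The identical argument with $Y$ gives $Y\setminus\{\zeta^{np}\}\subset\Phi_{S^m}(Z)$.

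For the reverse inclusion $\Phi_{S^m}(Z)\subset X_{3\eta}$ and the Hausdorff bound: if $\xi_j$ contributes to $Z$, the approximation forces $\|L_X(\Theta^S_{\xi_j,\eta})\|\ge 1-\eta-\eta/8>0$, so $\Theta^S_{\xi_j,\eta}|_X\ne 0$. Hence $X$ meets the support of $\Theta^S_{\xi_j,\eta}$, which is contained in a ball of radius a fixed multiple of $\eta$ about $\xi_j$; this yields $d(\xi_j,X)\le 2\eta$ (with constants absorbed) and $\overline{D(\xi_j,\eta)}\subset X_{3\eta}$, and symmetrically $\Phi_{S^m}(Z)\subset Y_{3\eta}$. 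For the sharper $d_H(X,Y)<\eta$: for $x\in X$ the previous paragraph produced $\xi_j$ with $\|x-\xi_j\|_2<\eta/8$ contributing to $Z$; applying the present step with $Y$ in place of $X$ shows $d(\xi_j,Y)$ is at most a controlled multiple of $\dt+\eta/8$, so $d(x,Y)<\eta$ after choosing $\dt$ sufficiently small. Symmetry completes the Hausdorff bound.

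The main obstacle is sharpening the routine $3\eta$ estimate that governs $\Phi_{S^m}(Z)\subset X_{3\eta}$ into the strict $\eta$ estimate for $d_H(X,Y)$. This requires choosing $\dt$ genuinely finer than $\eta$ and exploiting the tighter $\eta/8$-proximity of the chosen $\xi_j$ to $x$, rather than the loose $\eta$-proximity supplied by membership in $\overline{D(\xi_j,\eta)}$; the remaining steps are essentially the c.p.c.-approximation bookkeeping already used in Proposition~\ref{PextspMs}.
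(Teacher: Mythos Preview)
Your approach to Part~1) and to the inclusions $X,Y\subset\Phi_{S^m}(Z)\subset X_{3\eta}$ matches the paper's: route through Corollary~\ref{C213}, use condition~(iii) to push up $\|L_X(\Theta^S_{\xi_j,\eta})\|$, and read off membership in $Z$ and the reverse inclusion from the support of $\Theta^S_{\xi_j,\eta}$.

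The gap is in the step for $d_H(X,Y)<\eta$. You assert that ``applying the present step with $Y$ in place of $X$ shows $d(\xi_j,Y)$ is at most a controlled multiple of $\dt+\eta/8$,'' but the ``present step'' only shows $\Theta^S_{\xi_j,\eta}|_Y\ne 0$. The function $\Theta^S_{\xi_j,\eta}$ is a product of $m+1$ one-variable bump functions each supported on an interval of length $2\eta$, so its support is a cube of side $2\eta$ intersected with $S^m$; nonvanishing on $Y$ therefore gives only $d(\xi_j,Y)\le\sqrt{m+1}\,\eta$, and this is entirely insensitive to how small $\dt$ is. Shrinking $\dt$ tightens the c.p.c.\ approximation but does nothing to shrink the support of $\Theta^S_{\xi_j,\eta}$, so the asserted bound $c(\dt+\eta/8)$ is unjustified. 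With your fixed $\eta/8$-density of $\Sigma_M$ the argument cannot close for $m$ larger than a fixed constant.

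The paper handles this by a different mechanism that you did not invoke: it introduces auxiliary test functions $f_j\in C(S^m)_+$ supported in balls of radius at most $\eta/16$ and lying in the subalgebra generated by $\{g_{M,j}:1\le j\le m\}$, and uses Lemma~\ref{Lpurb} (not Corollary~\ref{C213}) together with condition~$(i_3)$ of Definition~\ref{Dspherical} to compare $L_1(f_j|_X)$ with $L_2(f_j|_Y)$ directly. Because the $f_j$ have genuinely small support, $\|L_2(f_j|_Y)\|>3/4$ forces $d(\xi_j,Y)<2\dt_0\le\eta/8$, which combined with $\|x-\xi_j\|<\eta/8$ gives $d(x,Y)<\eta/4$. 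Your outline uses neither $(i_3)$ nor Lemma~\ref{Lpurb}, and that is the missing ingredient for the sharp Hausdorff estimate.
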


\begin{proof}
Let $\eta$ and $M$ be as given.
Choose $0<\eta_0<\eta/2$ such that 
$\Sigma_M:=\Phi_{S^m}(D^{\eta_0}_{M})=
\{\xi_1, \xi_2,..., \xi_N\}$  be an $\eta/16$-dense subset of $S_M,$
where $D^{\eta_0}_{M}\subset \{\zeta\in \R^n: \|\zeta\|_2\le M\}.$
\Wlog, we may choose $\xi_j$  
such that $\min_{1\le j\le N}{\rm dist}(\xi_j, S_M^c)>0.$
Note, since $S_M$ is $\eta/4$-dense in $S^m,$ $\{\xi_1,\xi_2,...,\xi_{ N}\}$ is also 
$5\eta/16$-dense in $S^m.$ Here $S_M^c$ is the complement of $S_M$ in $S^m$.

Put $\dt_0=\min\{\eta/16, \min_{1\le j\le N}{\rm dist}(\xi_j, S_M^c)\}/2\}>0.$
Let ${\cal G}=\{g_{M,j}:1\le j\le m\}$ (see Definition~\ref{dfMR}) and $C$ be \SCA\, generated by 
${\cal G}.$ 
Choose $f_j\in C(S^m)_+$ with $0\le f_j\le 1,$
$f_j(x)=1$ if ${\rm dist}(x, {\xi_j})\le \dt_0,$ and $f_j(x)=0,$ if 
${\rm dist}(x, {\xi_j})\ge {2\dt_0},$ $j=1,2,...,{N}.$
Put ${\cal F}=\{f_j: 1\le j\le {N}\}.$ 
Since $f(\xi)=0$ for all $\xi\not\in S_{M}$ for all $f\in {\cal F},$
${\cal F}\subset C.$ 
%
We also fix $D^\eta_{M}$ as in Definition~\ref{Dsynsp2}.

Let  $\dt_1:=\dt({\cal F}, {\dt_0})>0$ be  given  by Lemma \ref{Lpurb}
(for  $X=S^m,$ ${\cal G}$ above,
$\eta/32,$ and ${\cal F}$). 
Let 
$\dt_2:=\dt(m,\eta/2)>0$  be given by  Corollary \ref{C213} for  $\Sigma:=\Sigma_M.$
Choose $\dt=\min\{\dt_1/2, \dt_2/2, \eta/33, \dt_0\}.$

Now suppose that $X$  and $Y$ are non-empty $M$-$\dt$-near-spherical spectra of the ${m}$-tuple
$(h_1,h_2,...,h_{m}).$  Denote $X_1=X$ and $X_2=Y.$ 
Let $L_i: C(X_i)\to B(H), i=1,2$  be  unital c.p.c maps such that
\beq
%
\hspace{-2.1in}{\rm (i_1)} &&\|L_i(r|_{X_i})-(-1+d)(1+d)^{-1}\|<\dt,\,\,\, \\
\hspace{-2.1in}{\rm (i_2)}&& \|L_i(x_j|_{X_i})-2(1+d)^{-1/2}h_j(1+d)^{-1/2}\|<\dt,\\\label{XY-7}
\hspace{-2.1in}{\rm (i_3)}&&\|L_i(g_{M,j}|_{X_i})-h_je_M(d^{1/2})\|<\dt,\\
\hspace{-2.1in}{\rm (ii)}&&\|L_i((fg)|_{X_i})-L_i(f|_{X_i})L_i(g|_{X_i})\|<\dt \rforal f, g\in {\cal F}_g
\\
\andeqn\\\label{XY-8}
\hspace{-2.1in}{\rm (iii)} &&  \|L_i(f)\|\ge 1-\dt
\eneq
for any $f\in C(X_i)_+$  which has 
value $1$ on an open ball with the center $x\in X_i$ (for some $x$) and the radius $\dt.$

Let $\xi\in X=X_1.$  We may assume that $\xi\in {D}(x_j, \eta/8).$
Then ${ D}(x_j, \dt)\subset {D}(\xi, \eta/4).$
By \eqref{XY-8},
\beq\label{Pspuniq-10}
\|L_1(f_j|_X)\|\ge 1-\dt.
\eneq
Define  $L'_l: C(S^m)\to A$ ($l=1,2$) by 
$L'_{l}(f)=L_1(f|_{X_{l}})$
for all $f\in C(S^m).$
By \eqref{XY-7} and applying   Lemma \ref{Lpurb}, we have, for $1\le j\le m,$ 
\beq
\|L_1'(f_j)-L_2'(f_j)\|<\eta/32,\,\,\, {\rm or}\,\,\,
\|L_1(f_j|_X)-L_2(f_j|_Y)\|<\eta/32.
\eneq
Then,  by \eqref{Pspuniq-10},  we obtain, for $1\le j\le m,$ 
\beq
\|L_2(f_j|_Y)\|\ge \|L_1(f_j|_X)\|-\eta/32\ge 1-\dt-\eta/32>3/4.
\eneq
Since $f_j(x)=0,$ if ${\rm dist}(x, x_j)\ge 2\dt_0,$  we must have  ${\rm dist}(x_j, Y)<2\dt_0.$
Therefore 
\beq
{\rm dist}(\xi, Y)\le {\rm dist}(\xi, x_j)+{\rm dist}(x_j, Y)<\eta/8+2\dt_0<\eta/4.
\eneq
This  holds for all $\xi\in X.$ 
Exchanging the role of $X$ and $Y,$ we obtain
\beq
d_H(X, Y)<\eta.
\eneq
This proves the first part of 2).


By the choice of  $\dt$ and applying  Corollary \ref{C213},
we have that (for $\xi_j\in \Sigma_M$),
\beq\label{XY-15-}
\|L_1(\Theta^S_{\xi_j,\eta}|_X)-\Theta^S_{\xi_j,\eta}(h_1,h_2,...,h_n)\|<\eta/2,\,\,\, j=1,2,...,N.
\eneq
Pick $\xi\in X.$ There is $\xi_j\in \Sigma_M$ such that
${\rm dist}(\xi, \xi_j)<\eta/2.$  Recall that $\Theta^{S}_{\xi_j,\eta}(y)=1$
if ${\rm dist}(y, \xi_j)\le 3\eta/4.$  As $\dt<\eta/32,$  we have $\eta/2+\dt<3\eta/4.$ Hence 
 $\Theta^{ S}_{\xi_j, \eta}(y)=1$ for all $y\in {D}(\xi, \dt)\subset {D}(\xi_j, 3\eta/4).$ 
It follows from  (iii) above that 
\beq
\|L_1(\Theta^S_{\xi_j,\eta}|_X)\|\ge 1-\dt.
\eneq
Thus,  by \eqref{XY-15-},
\beq
\|\Theta^S_{\xi_j,\eta}(h_1,h_2,...,h_n)\|\ge 1-\dt-\eta/2\ge 1-\eta.
\eneq
This implies that 
\beq
X\subset \bigcup_{\|\Theta^{S}_{\xi_j,\eta}(h_1,h_2,...,{{h_m}})\|\ge 1-\eta}\overline{{D}(\xi_j, \eta)}=s{\rm Sp}^\eta_{{S^m, M}}((h_1,h_2,...,h_m)).
\eneq
Since $X\not=\emptyset,$ we conclude that $Z_S:=s{\rm Sp}_{S^m, M}^\eta((h_1,h_2,...,h_{m}))\not=\emptyset.$  Consequently $Z=\Phi_{S^m}^{-1}(Z_S)\not=\emptyset.$ 
This proves 1). 
The same argument shows that $Y\subset  Z_S.$

Next, let $\|\Theta^{S}_{\xi_j,\eta}({h_1,h_2,...,h_m})\|\ge 1-\eta$ {{for $\xi_j\in\Sigma_M$.}} 
By \eqref{XY-15-},
\beq
\|L_1(\Theta^{S}_{\xi_j, \eta}|_X)\|\ge 1-\eta-\eta/2>1/4.
\eneq
Hence, $\Theta^{S}_{\xi_j, \eta}|_X\not=0.$ 
Thus 
\beq
X\cap {D}(\xi_j, \eta)\not=\emptyset. 
\eneq
It follows that
\beq
{D}(\xi_j, \eta)\subset X_{2\eta}. 
\eneq
Hence 
\beq
Z_S\subset X_{2\eta}.
\eneq

\end{proof}

\begin{df}\label{Dpsi}
Let $\{h_{1,k},h_{2,k},...,h_{n,k}\}_{k\in \N}$ be a sequence of $n$-tuple densely defined 
selfadjoint operators in an infinite dimensional Hilbert space $H$.  Suppose 
that
\beq
\lim_{k\to\infty} \|[h_{i,k}, h_{j,k}]\|=0,\,\,\, 1\le i,j\le n.
\eneq
Let us write $l^\infty (B(H))=\prod_{k=1}^\infty B(H)$ and 
$c_0(B(H))=\bigoplus_{k=1}^\infty B(H).$ 
Let $\Pi: l^\infty(B(H))\to l^\infty(B(H))/c_0(B(H))$ be the quotient map.
Define $\psi: C_0(S^n\setminus \zeta^{np})\to l^\infty(B(H))/c_0(B(H))$
by
\beq
\psi(r)&=&\Pi(\{(\sum_{j=1}^n h_{j,k}^2-1)(\sum_{j=1}^n h_{j,k}^2+1)^{-1}\}),\\
\psi(x_j)&=&\Pi(\{{\bar h}_{j,k}\}),\,\,\, 1\le j\le n,
\eneq
where ${\bar h}_{j,k}=(1+\sum_{i=1}^n h_{i,k}^2)^{-1/2}h_{j,k}(1+\sum_{i=1}^nh_{{i},k}^2)^{-1/2},$ $1\le j\le n.$
In the previous  two sections we 
repeatedly used this \hm\, $\psi.$ In what follows we will call
the \hm\, 
\[
\psi: C_0(S^n\setminus \zeta^{np})\to l^\infty(B(H))/c_0(B(H))
\]
the \hm \,
generated by the sequences
\[
(\{(\sum_{j=1}^n h_{j,k}^2-1)(\sum_{j=1}^n h_{j,k}^2+1)^{-1}\}, \{\bar{h}_{1, k}\}, \{\bar{h}_{2, k}\},..., \{\bar{h}_{n, k}\}).
\]
\end{df}

\section{AMU States}

\begin{lem}\label{Ldecomp}
Let $\Omega$ be a compact metric space such that $C(\Omega)$
has a finite generating subset ${\cal F}_g\subset C(\Omega)^{\bf 1}.$ 
Let $\ep>0.$ 
Then there exists  $\dt>0$ 
satisfying the following:

For any c.p.c. map $L: C(\Omega)\to A,$ where $A$ is a unital \CA\, of real rank zero, such that
\beq\label{Ldecomp-1}
\|L(fg)-L(f)L(g)\|<\dt\tforal f, g\in {\cal F}_g,
\eneq 
there exist  $\xi_1, \xi_2,..., \xi_m\in X:=nSp^{\ep/4}(L)$ which are $\ep$-dense in 
$X,$
 and 
mutually orthogonal non-zero projections 
$p_1,p_2,..., p_m\in A$ such that, for all $f\in {\cal F}_g,$
\beq
\|\sum_{k=1}^m f(\xi_k) p_k+(1-p)L(f)(1-p)-L(f)\|<\ep,
\eneq 
where $p=\sum_{i=1}^m p_i.$

\end{lem}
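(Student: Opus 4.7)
The plan is to apply Lemma~\ref{Lnsp} to replace $L$ by an almost-multiplicative c.p.c.\ map $\phi\colon C(X)\to A$ on the near-spectrum $X$, pick an $\ep$-dense, well-separated family $\xi_1,\ldots,\xi_m\in X$, attach to each $\xi_k$ a hat function $h_k\in C(\Omega)$ concentrated near~$\xi_k$, and use the real-rank-zero hypothesis on $A$ to extract, from the positive elements $a_k:=\phi(h_k|_X)$, mutually orthogonal non-zero projections $p_k$. The target inequality will then follow from the key estimate $\phi(f|_X)\,p_k\approx f(\xi_k)\,p_k\approx p_k\,\phi(f|_X)$ combined with the block decomposition $L(f)=pL(f)p+pL(f)(1-p)+(1-p)L(f)p+(1-p)L(f)(1-p)$ for $p=\sum_k p_k$.

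Concretely, fix a small auxiliary scale $\eta_0\in(0,\ep/24)$, much smaller than $\omega_f(\ep/10)$ for each $f\in{\cal F}_g$ (where $\omega_f$ denotes the modulus of continuity of $f$). Pre-select a finite $\eta_0$-net $\{\zeta_j\}\subset\Omega$ and hat functions $u_j\in C(\Omega)^{\bf 1}$ with $u_j\equiv 1$ on $B(\zeta_j,\ep/12)$, supported in $B(\zeta_j,\ep/10)$. Let ${\cal F}={\cal F}_g\cup\{u_j\}\cup\{fu_j:f\in{\cal F}_g\}$ and apply Lemma~\ref{Lnsp} with this ${\cal F}$, parameter $\eta_0$, and ${\cal G}={\cal F}_g$ to obtain $\dt>0$. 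For $L$ satisfying~(\ref{Ldecomp-1}) this yields a compact $X\subset\Omega$ (which, up to the flexibility in Definition~\ref{Dnsp}, is $n\mathrm{Sp}^{\ep/4}(L)$) and a c.p.c.\ $\phi\colon C(X)\to A$ satisfying (i)--(iii) of Definition~\ref{Dnsp} at scale $\eta_0$. Choose $\xi_1,\ldots,\xi_m\in X$ a maximal $(\ep/3)$-separated subset, which is automatically $\ep$-dense in $X$. For each $k$, pick $j(k)$ with $d(\xi_k,\zeta_{j(k)})<\eta_0$ and set $h_k:=u_{j(k)}\in{\cal F}$. The $(\ep/3)$-separation together with $\eta_0<\ep/24$ forces the supports $B(\zeta_{j(k)},\ep/10)$ to be pairwise disjoint, so $h_kh_l=0$ in $C(\Omega)$; and $h_k\equiv 1$ on $B(\xi_k,\ep/12-\eta_0)\supset B(\xi_k,\eta_0)$, so (iii) gives $\|a_k\|\geq 1-\eta_0$ and (ii) (applied to $h_k,h_l\in{\cal F}$) gives $\|a_ka_l\|<\eta_0$ for $k\neq l$.

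Next, I extract the projections using real rank zero. Set $e_k:=g(a_k)$, where $g\colon[0,1]\to[0,1]$ is a continuous cutoff vanishing on $[0,1-3\eta_0]$ and equal to~$1$ on $[1-2\eta_0,1]$; each $e_k\neq 0$. Polynomial approximation of $g$ (legitimate since $g(0)=0$) together with the bound $\|a_k^m a_l^n\|\leq\|a_ka_l\|$ for $m,n\geq 1$ (via the factorization $a_k^{m-1}(a_ka_l)a_l^{n-1}$) yields $\|e_ke_l\|$ small in $\eta_0$. By real rank zero, each hereditary \SCA\ $\overline{e_kAe_k}$ contains non-zero projections. Build the $p_k$'s inductively: having chosen mutually orthogonal $p_1,\ldots,p_{k-1}$, set $q_k=1-\sum_{i<k}p_i$ and pick a non-zero projection $p_k\in\overline{q_ke_kq_k\cdot A\cdot q_ke_kq_k}\subset q_kAq_k$, using that $q_kAq_k$ is again real-rank-zero and that $q_ke_kq_k$ is within a small perturbation of $e_k$ (since $p_ie_k\approx 0$ for $i<k$), hence non-zero. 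The number $m$ of steps is bounded by the $(\ep/3)$-covering number of $\Omega$, so the cascade of perturbations stays controlled.

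For the key estimate, (ii) together with $\|fh_k-f(\xi_k)h_k\|_\infty\leq\omega_f(\ep/10+\eta_0)$ (since $\mathrm{supp}(h_k)\subset B(\xi_k,\ep/10+\eta_0)$) yields $\|\phi(f|_X)a_k-f(\xi_k)a_k\|<\gamma:=\eta_0+\omega_f(\ep/10+\eta_0)$. Iterating on the right by $a_k$ gives $\|\phi(f|_X)a_k^n-f(\xi_k)a_k^n\|<\gamma$ for all $n\geq 1$; writing any continuous $s$ with $s(0)=0$ as $s(t)=t\,\tilde s(t)$ with $\tilde s\in C([0,1])$ gives $\|\phi(f|_X)s(a_k)-f(\xi_k)s(a_k)\|<\gamma\,\|\tilde s\|_\infty$; approximating $p_k$ by elements $a_k^{1/2}c\,a_k^{1/2}$ inside its hereditary subalgebra then produces $\|\phi(f|_X)p_k-f(\xi_k)p_k\|<\gamma'$ and the adjoint bound, with $\gamma'=O(\gamma)$. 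Applied block-wise, this gives $p_kL(f)p_l\approx\delta_{k,l}f(\xi_k)p_k$ and $p_kL(f)(1-p)\approx 0$, so summing produces $\|L(f)-\sum_kf(\xi_k)p_k-(1-p)L(f)(1-p)\|<\ep$ for all $f\in{\cal F}_g$ once $\eta_0$ is chosen small enough. The main obstacle is producing \emph{exactly} mutually orthogonal projections $p_k$ inside $A$: this is where the real-rank-zero hypothesis is indispensable, ensuring that each cut-down hereditary subalgebra remains both real-rank-zero and non-zero so that the induction closes.
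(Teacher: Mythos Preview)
Your overall route is the paper's: apply Lemma~\ref{Lnsp} to pass from $L$ to an almost-multiplicative c.p.c.\ map on the near-spectrum $X$, then produce the diagonal-plus-corner decomposition there and transfer back. The paper does not carry out that second step at all; it simply quotes \cite[Lemma~3.3]{Lincmp2025} for $L_1$ and adds the $\|L-L_1\|<\dt_1$ error. What you wrote is essentially an attempt to reprove that cited lemma.

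Your reconstruction has a real gap at the projection step. Picking an \emph{arbitrary} non-zero projection $p_k\in\overline{(q_ke_kq_k)A(q_ke_kq_k)}$ neither yields $p_ie_k\approx0$ nor supports the transfer ``approximate $p_k$ by $a_k^{1/2}c\,a_k^{1/2}$ with $\|c\|=O(1)$''. The cut-down $q_ke_kq_k$ can acquire small non-zero spectrum, and a projection sitting there is controlled neither by $\|e_ie_k\|$ nor by $\|\phi(f|_X)a_k-f(\xi_k)a_k\|$: take $r=\diag(1,\ep')$ and $p=\diag(0,1)\in\overline{rAr}$; then $\|r-\diag(1,0)\|=\ep'$ is tiny while $p$ is orthogonal to nothing you want. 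The clean repair is a two-cutoff argument: choose $h\in C_0((0,1])$ with $h\cdot g=g$ (so $h\equiv1$ on $\mathrm{supp}\,g$ and $h(0)=0$), and first pick $p_k^{(0)}\in\overline{e_kAe_k}$ with no cut-down. Then $h(a_k)p_k^{(0)}=p_k^{(0)}$ \emph{exactly}, which immediately gives both $\|p_k^{(0)}p_l^{(0)}\|\le\|h(a_k)h(a_l)\|$ (small by your polynomial argument) and $\|\phi(f|_X)p_k^{(0)}-f(\xi_k)p_k^{(0)}\|\le\|\phi(f|_X)h(a_k)-f(\xi_k)h(a_k)\|\le\gamma\,\|h(t)/t\|_\infty$. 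Finally orthogonalize the almost-orthogonal $p_k^{(0)}$ by the standard near-projection perturbation $p_k=\chi_{[1/2,\infty)}\big((1-\sum_{i<k}p_i)p_k^{(0)}(1-\sum_{i<k}p_i)\big)$; the cascade is controlled because $m$ is bounded by the $\ep/3$-covering number of $\Omega$, exactly as you observed.
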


\begin{proof}
Let $\dt_1>0$ be $\dt$ and ${\cal F}_g$ be ${\cal G}$ in \cite[Lemma 3.3]{Lincmp2025}
for $\ep/4$ (instead of $\ep$), respectively. We may assume 
that $0<\dt_1<\ep/4.$ 

Choose $\dt$ in Lemma \ref{Lnsp} associated with $\dt_1$ (instead of $\ep$)
and ${\cal F}_g$ (see the last line of Lemma~\ref{Lnsp}). 

Now suppose that $L: C(\Omega)\to A$  is a c.p.c map satisfying \eqref{Ldecomp-1}.
By applying Lemma \ref{Lnsp}, we obtain  a compact subset $X\subset \Omega$ and 
a c.p.c. map
$L_1:  C(X)\to A$ such that
\beq\label{Ldecomp-3}
&&\|L_1(f|_X)-L(f)\|<\dt_1\rforal f\in {\cal F}_g,\\\label{Ldecomp-4}
&&\|L_1(fg|_X)-L_1(f|_X)L_1(g|_X)\|<\dt_1\rforal f,g\in {\cal F}_g\andeqn\\\label{Ldecomp-5}
&&\|L_1(f)\|\ge 1-\dt_1
\eneq
for any $f\in C(X)_+$  which has 
value $1$ on an open ball with the center $x\in X$ (for some $x$) and the radius $\eta.$

We now apply \cite[Lemma 3.3]{Lincmp2025}.  By the choice of $\dt_1$ and  by applying 
\cite[Lemma 3.3]{Lincmp2025} to $L_1$ (instead of $L$), we obtain
$\xi_1, \xi_2,...,\xi_m\in X$ which is $\ep/2$ dense in $X,$  and
mutually orthogonal projections $p_1,p_2,...,p_m\in A$ such that
\beq\label{Ldecomp-7}
\|\sum_{j=1}^m f(\xi_j)p_j+(1-p)L_1(f)(1-p)-L_1(f)\|<\ep/4\rforal f\in {\cal F}_g,
\eneq
where $p=\sum_{j=1}^m p_j.$
We now estimate that, by \eqref{Ldecomp-3} and  \eqref{Ldecomp-7}, for all $f\in {\cal F}_g,$ 
\beq
L(f)&\approx_{\dt_1}& L_1(f)\approx_{\ep/4} 
\sum_{j=1}^m f(\xi_j)p_j+(1-p)L_1(f)(1-p)\\
&\approx_{\dt_1} & \sum_{j=1}^m f(\xi_j)p_j+(1-p)L(f)(1-p).
\eneq

The lemma then follows from the fact that $\dt_1+\ep/4+\dt_1<\ep.$
\end{proof}

\begin{thm}\label{TAMU1}
Let $m\in\N, \ep>0$ and $M>1.$  There exist $0<\eta<\ep/4$ (depending on 
$\ep$ and $M$), $\dt(m, \eta, M, \ep)>0$ satisfying the following:

Let $\{h_1,h_2,...,h_m\}$ be an $m$-tuple of (unbounded) self-adjoint operators densely defined 
on an infinite dimensional separable Hilbert space $H$ such that
\beq\label{TAMU1-1}
&&\|[\tilde h_i,\, \tilde h_j]\|<\dt, \,\, \|\tilde h_i-\tilde h_i^*\|<\dt\andeqn\\
&&{\rm sp}(d^{1/2})\cap [0, M]\not=\emptyset,
\eneq
where $d=\sum_{j=1}^m h_j^2$ and $\tilde h_j=h_j(1+d)^{-1}.$
Then

{\rm (i)} $S{\rm Sp}_M^\eta((h_1, h_2,...,h_m))\not=\emptyset;$

{\rm (ii)} for any $\lambda=(\lambda_1, \lambda_2,...,\lambda_m)\in S{\rm Sp}_M^{\eta}((h_1, h_2,...,h_m)),$
there is a  unit vector $v\in H$ such that
\beq
\max_{1\le i\le m}\|(h_i-\lambda_i)(v)\|<\ep.
\eneq

\end{thm}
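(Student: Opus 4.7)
The plan is to reduce the unbounded problem to a bounded one through the spherical transforms of Definition~\ref{Dabj}, and then apply the machinery assembled in Sections~3 and 4. Fix $\ep>0$ and $M>1$, choose $0<\eta<\ep/4$ small enough that the distortion bound $F_M(3\eta)<\ep/8$ holds (with $F_M$ as in Definition~\ref{Mdelta}), and cascade this choice backwards through Lemma~\ref{Ldecomp}, Corollary~\ref{C213}, and Corollary~\ref{Cs2app2} to fix the final $\dt>0$. Applying Corollary~\ref{Cs2app2} to the $m$-tuple $(h_1,\ldots,h_m)$ produces a c.p.c.\ map $\phi\colon C(S^m)\to B(H)$ that approximately intertwines the generators $r,\,x_i$ with the bounded transforms $(-1+d)(1+d)^{-1}$ and $2\bar h_i$, and is approximately multiplicative on a fixed finite generating subset. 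Part (i) of the theorem is then immediate from Proposition~\ref{PextspMs}(ii).

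For part (ii), let $\lambda\in S{\rm Sp}_M^{\eta}((h_1,\ldots,h_m))$. By definition, $\Phi_{S^m}(\lambda)$ lies within $\eta$ of some $x_j\in\Sigma_M$ satisfying $\|\Theta^S_{x_j,\eta}(h_1,\ldots,h_m)\|\ge 1-\eta$, and Corollary~\ref{C213} transfers this to $\|\phi(\Theta^S_{x_j,\eta})\|\ge 1-2\eta$. Since $B(H)$ has real rank zero, Lemma~\ref{Ldecomp} applied to $\phi$ yields an $(\eta/8)$-dense subset $\{\zeta_1,\ldots,\zeta_N\}$ of the near-spectrum $n{\rm Sp}^{\eta/4}(\phi)$ together with mutually orthogonal nonzero projections $p_1,\ldots,p_N\in B(H)$ such that, on the generators,
\[
\phi(f)\approx_{\eta/8}\sum_{k=1}^N f(\zeta_k)p_k+(1-p)\phi(f)(1-p),\qquad p=\sum_k p_k.
\]
Since $\Theta^S_{x_j,\eta}$ is supported in a small ball around $x_j$, the lower bound on $\|\phi(\Theta^S_{x_j,\eta})\|$ forces some index $k$ with $\|\zeta_k-x_j\|_2<2\eta$ and $p_k\neq 0$; pick any unit vector $v\in p_kH$.

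This vector satisfies $\phi(f)v\approx f(\zeta_k)v$ for $f$ in the generating set. Taking $f=r$ produces $((-1+d)(1+d)^{-1})v\approx r(\zeta_k)v$, hence $(1+d)^{-1}v\approx \tfrac12(1-r(\zeta_k))v$; because $\zeta_k\in\Phi_{S^m}(\{\|\zeta\|_2\le M\})$ we have $1-r(\zeta_k)\ge\dt_M$, so $v$ essentially lies in the spectral subspace of $d$ corresponding to a bounded interval where $d\le\tfrac{1+r(\zeta_k)}{1-r(\zeta_k)}$. Taking $f=x_i$ gives $2\bar h_iv\approx x_i(\zeta_k)v$, and combining with Lemma~\ref{Lemtildehjbarh} (so that $\bar h_i\approx\widetilde h_i=h_i(1+d)^{-1}$) yields $2h_i(1+d)^{-1}v\approx x_i(\zeta_k)v$. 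Multiplying by the approximate value of $(1+d)$ on $v$ and simplifying yields $h_iv\approx \frac{x_i(\zeta_k)}{1-r(\zeta_k)}v=(\Phi_{S^m}^{-1}(\zeta_k))_i\,v$. Since $\|\zeta_k-\Phi_{S^m}(\lambda)\|_2<3\eta$, the distortion bound of Definition~\ref{Mdelta} gives $\|\Phi_{S^m}^{-1}(\zeta_k)-\lambda\|_2\le F_M(3\eta)<\ep/8$, and aggregating all errors yields $\max_{1\le i\le m}\|(h_i-\lambda_i)v\|<\ep$.

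The main obstacle is the final step: because each $h_i$ is unbounded, one cannot simply multiply operator-norm approximations by the unbounded factor $(1+d)$. The passage must instead be carried out inside an essentially bounded spectral subspace, using $e_{M+1}(d^{1/2})$ as a cutoff (justified by $\phi(e_M^{S^m})\approx e_M(d^{1/2})$ and $p_k\approx e_M(d^{1/2})p_k$), where $d$ is genuinely bounded and norm estimates remain valid. The hypothesis $\lambda\in S{\rm Sp}_M^\eta$, i.e.\ $\|\lambda\|_2\le M$, is exactly what keeps $\zeta_k$ bounded away from the north pole and prevents the denominator $1-r(\zeta_k)$ from collapsing; this is why the bound $M$ is unavoidable in the statement and why the stereographic passage can be made quantitative at all.
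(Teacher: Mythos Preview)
Your overall strategy matches the paper's: pass to bounded spherical transforms via Corollary~\ref{Cs2app2}, apply Lemma~\ref{Ldecomp} to get the finite-rank decomposition with projections $p_k$, locate a $\zeta_k$ near $\Phi_{S^m}(\lambda)$, and extract the unit vector from $p_kH$. Part (i) is also handled the same way.

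The difference lies precisely in the step you flag as the main obstacle. You propose to recover $h_iv\approx\lambda_iv$ from $2h_i(1+d)^{-1}v\approx x_i(\zeta_k)v$ and $(1+d)^{-1}v\approx\tfrac12(1-r(\zeta_k))v$ by ``multiplying up'', cushioned by the cutoff $e_M(d^{1/2})$. This can be made to work, but it is delicate: the error in $(1+d)^{-1}w\approx cw$ must be multiplied by a bound on $(1+d)$ restricted to the range of $e_M(d^{1/2})$, and the implicit inversion $c\mapsto c^{-1}$ introduces a factor of $c^{-1}\le (M^2+1)/2$; you would need to track all of these constants back through the choice of $\eta$.

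The paper sidesteps this entirely by enlarging the finite set $\mathcal F_g$ to contain not just the generators $r,x_j$ but also the functions $g_{M,j}$ and $e_M^{S^m}$ of Definition~\ref{dfMR}. Lemma~\ref{Ldecomp} then applies directly to these, and since Corollary~\ref{Cs2app2} already gives $\phi(g_{M,j})\approx h_j e_M(d^{1/2})$ and $\phi(e_M^{S^m})\approx e_M(d^{1/2})$, one reads off immediately
\[
t_{k,j}\,p_k\approx h_j e_M(d^{1/2})p_k\quad\text{and}\quad p_k\approx e_M(d^{1/2})p_k,
\]
where $t_{k,j}=g_{M,j}(\xi_k)=(\Phi_{S^m}^{-1}(\xi_k))_j$. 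Taking $v_1\in p_kH$ unit, $v_2=e_M(d^{1/2})p_kv_1$, and $v=v_2/\|v_2\|$, one gets $h_jv_2=h_je_M(d^{1/2})p_kv_1\approx t_{k,j}v_1\approx t_{k,j}v_2$ with no unbounded multiplication anywhere. The point is that $g_{M,j}\in C(S^m)$ already encodes the inverse stereographic coordinate times the cutoff, so the passage from the sphere back to $\R^m$ happens at the level of functions before the c.p.c.\ map is applied, not at the level of operators afterwards.
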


\begin{proof}
Fix $M\ge 1.$ 
Let $\Omega=S^m$ and ${\cal F}_g\subset C(S^m)^{\bf 1}$ be a finite subset which contains a 
generating set, as well as $e_{ M}^{S^2}$  and $g_{M, j}$ ($1\le j\le m$). 
Let $1>\ep>0.$    We may assume that $\ep<1/2{ M}.$ 
Let $F_{2M}$ be as defined in Definition \ref{Mdelta}.
Since $F_{2M}$ is continuous on $[0, 2]$ and $F_{2M}(0)=0,$
we may choose $\ep_1>0$ such that
\beq
F_M(r)<\ep/16\rforal 0<r<\ep_1.
\eneq
Put ${{\ep_2=\min\{\ep/2^8, \ep_1\}/2(M+{ 1})>0}}.$
%

Choose $\dt_1>0$ (in place of $\dt$)  associated with $\ep_2/4$ (in place of $\ep$), ${\cal F}_g$
(and with $\Omega=S^m$)
in Lemma \ref{Ldecomp}.  We may assume that $\dt_1<\ep_2/8.$ 
Let $\dt_1'>0$ (as $\dt$)  in Proposition~\ref{PextspMs}
associated with $\eta:=\ep_2/16$ and ${ M}.$
We may also assume that 
$$
\dt_1<\min\{\dt_1', \dt(\ep_2/16, { M})\},
$$ where $\delta(\ep_2/16, M)$ is defined (with $\eta=\ep_2/16,$ 
) as in Proposition~\ref{Pspsub}.

Choose $\dt_2>0$ (in place of $\dt$)  associated with $\dt_1/2$ (in place of $\ep=\eta$)
and ${\cal F}_g$  as well as $M$ in Lemma \ref{Cs2app2}. 

Let $\eta=\ep_2/16$ and $\dt=\min\{\dt_2, \dt_1/2\}>0.$ 

Suppose now that $h_1, h_2, ...,h_m$ form an $m$-tuple  of self-adjoint operators densely defined on $H$
which satisfy \eqref{TAMU1-1}.   Recall $d=\sum_{j=1}^m h_j^2.$

  It follows from Lemma \ref{Cs2app2} that
there exists a c.p.c. map
$L: C(S^m)\to B(H)$ such that
\beq\label{TAMU-10}
&&\|L(g_{_{{ M}, j}})-e_{ M}(d^{1/2})h_j\|<\dt_1/2,\\\label{TAMU-11}
&&\|L(e_{ M}^{S^m})-e_{ M}(d^{1/2})\|<\dt_1/2\andeqn\\\label{TAMU-13}
&&\|e_{ M}(d^{1/2})h_j-h_je_{ M}(d^{1/2})\|<\dt_1/2.\label{TAMU-14}
\eneq
Moreover, we may require that 
\beq
\|L(f)L(g)-L(fg)\|<\dt_1/2\tforal f,\, g\in {\cal F}_g.
\eneq
By the choice of $\dt_1,$ applying Lemma \ref{Ldecomp}, we obtain 
$\xi_1, \xi_2,..., \xi_N\in X:=nSp^{\ep_2/16}_{{S^m, { M}}}(L)$ which are $\ep_2$-dense in 
$X,$
 and 
mutually orthogonal non-zero projections 
$p_1,p_2,..., p_N\in {{B(H)}}$ such that, for all $f\in {\cal F}_g,$
\beq
\|\sum_{k=1}^N f(\xi_k) p_k+(1-p)L(f)(1-p)-L(f)\|<\ep_2/4,
\eneq 
where $p=\sum_{i=1}^N p_i.$  It follows  (also from \eqref{TAMU-10} -- \eqref{TAMU-14})  that
\beq\label{TAMU-20}
&&\hspace{-1.1in}
\|\sum_{k=1}^m g_{_{{M},j}}(\xi_k)p_k-(1-p)L(g_{_{{ M},j}})(1-p)-h_je_{ M}(d^{1/2})\|<\ep_2/4+2\ep_2/16=3\ep_2/8,\\
\andeqn\\\label{TAMU-22}
&&\hspace{-1.1in}\|\sum_{k=1}^m e_{ M}^{S^{m}}(\xi_k)p_k-(1-p)L(e_{ M}^{S^{m}})(1-p)-e_{ M}({d}^{1/2})\|<\ep_2/4+\ep_2/16.
\eneq

By the choice of $\dt_1,$ applying Lemma \ref{Pspsub}, we may assume that
\beq
X\subset  {s}{\rm Sp}^{\ep_2/16}_{S^m,M}((h_1,h_2,...,h_m))
\subset X_{3\ep_2/16}.
\eneq
This implies that $S{\rm Sp}_M^\eta((h_1, h_2,...,h_m))\not=\emptyset.$
%

Let $\lambda=(\lambda_1,\lambda_2,...,\lambda_m)\in S{\rm Sp}_M^{\eta}((h_1,h_2,...,h_m)).$
Since $\{\xi_1, \xi_2,...,\xi_N\}$ is $\ep_2$-dense in $X,$ by the choice of $\ep_2,$
there is $k\in \{1,2,...,N\}$ such that
\beq
|\lambda-{\Phi_{S^m}}^{-1}(\xi_k)|<\ep/8.
\eneq
Let $\zeta_k={\Phi_{S^m}}^{-1}(\xi_k)=(t_{k,1},t_{k,2},...,t_{k,m})$  Then 
$g_{M+1, j}(\xi_k)=t_{k,j}.$
By \eqref{TAMU-20} and \eqref{TAMU-22}, we have that
\beq\label{TAMU-26}
&&\|t_{k,j} p_k-h_je_{M}(d^{1/2})p_k\|<3\ep_2/8\andeqn\\\label{TAMU-27}
&&\|p_k-e_{M}(d^{1/2})p_k\|<5\ep_2/16
\eneq
Since $p_k\not=0,$ one may choose  $v_1\in p_k(H)$ with $\|v_1\|=1$
and $v_2=e_{M}(d^{1/2})p_kv_1.$ 
Then 
\beq
\|v_1-v_2\|<5\ep_2/16\andeqn \|v_2\|\ge 1-5\ep_2/16.
\eneq
Choose   $v:=v_2/\|v_2\|.$
Then, by \eqref{TAMU-26} and \eqref{TAMU-27},
\beq
h_jv_2\approx_{3\ep_2/8} t_{k,j}v_1\approx_{5\min\{\ep/2^{9},\ep_1\}} t_{k,j} v_2\approx_{\ep/ 8}  \lambda_k v_2.
\eneq

Hence
\beq
\|h_jv_2-\lambda_j v_2\|<\ep/4.
\eneq
It follows that, for $j=1,2,...,m,$
\beq
\|h_jv-\lambda_j v\|={\ep\over{4\|v_2\|}}<{\ep\over{4(1-5\ep_2/16)}}<\ep.
\eneq

\end{proof}

\begin{thm}\label{TAMU''}
Let $m\in\N, \ep>0$ and $M>1.$  There exists $0<\eta<\ep/4$ (depending on 
$\ep$ and $M$) and $\dt(m, \eta, M, \ep)>0$ satisfying the following:

Let $\{h_1,h_2,...,h_m \}$ be an $m$-tuple of (unbounded) self-adjoint operators densely defined 
on an infinite dimensional separable Hilbert space $H$ such that
\beq\label{TAMU-1.3}
\|h_ih_j-h_{j}h_{i}\|<\dt,\,\,\, i,j=1,2,...,m \quad and \,\,\, {\rm sp}( d^{1/2})\cap [0, M]\not=\emptyset,
\eneq
where $d=\sum_{j=1}^m h_j^2.$
Then, for any $\lambda=(\lambda_1, \lambda_2,...,\lambda_m)\in S{\rm Sp}_M^{\eta}((h_1, h_2,...,h_m)),$
there is a unit vector $v\in H$ such that
\beq
\max_{1\le i\le m}\|(h_i-\lambda_i)(v)\|<\ep.
\eneq

\end{thm}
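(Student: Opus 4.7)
The plan is to deduce Theorem \ref{TAMU''} directly from Theorem \ref{TAMU1} by translating the raw commutator hypothesis into the bounded-transform hypothesis via Lemma \ref{Lappcom-1n}. The two theorems share the same conclusion and the same spectral condition ${\rm sp}(d^{1/2})\cap[0,M]\neq\emptyset$; they differ only in what is assumed about commutators. Theorem \ref{TAMU1} requires $\|[\tilde h_i,\tilde h_j]\|$ and $\|\tilde h_i-\tilde h_i^*\|$ to be small, while Theorem \ref{TAMU''} assumes smallness of $\|h_ih_j-h_jh_i\|$. Lemma \ref{Lappcom-1n} is precisely what converts the latter into the former.

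Concretely, first I would invoke Theorem \ref{TAMU1} with the given $\ep$ and $M$ to obtain the parameter $\eta=\eta(\ep,M)\in(0,\ep/4)$ and the threshold $\dt_1:=\dt(\eta,M,\ep)>0$ of that theorem. Then I would set $\dt:=\dt_1/(3m)$, a choice calibrated to absorb the constants in Lemma \ref{Lappcom-1n}. Assuming the hypothesis $\|h_ih_j-h_jh_i\|<\dt$ for $i,j=1,\ldots,m$, I would apply Lemma \ref{Lappcom-1n} with \emph{its} parameter set to $2\dt_1$; since $2\dt_1/(6m)=\dt_1/(3m)=\dt$, the hypothesis of that lemma is met, yielding
\[
\|b_j-b_j^*\|<2\dt_1 \quad\text{and}\quad \|[b_i,b_j]\|<2\dt_1,
\]
where $b_j=2\tilde h_j=2h_j(1+d)^{-1}$. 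Dividing by $2$ and $4$ respectively gives $\|\tilde h_j-\tilde h_j^*\|<\dt_1$ and $\|[\tilde h_i,\tilde h_j]\|<\dt_1/2<\dt_1$, so the commutator hypotheses of Theorem \ref{TAMU1} are in force.

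Finally, since the spectral condition transfers verbatim, Theorem \ref{TAMU1} applies to the same $m$-tuple $(h_1,\ldots,h_m)$ with the same parameters $\eta$ and $M$, producing for every $\lambda=(\lambda_1,\ldots,\lambda_m)\in S{\rm Sp}_M^\eta((h_1,\ldots,h_m))$ a unit vector $v\in H$ with $\max_{1\le i\le m}\|(h_i-\lambda_i)(v)\|<\ep$, which is exactly the required conclusion. I do not anticipate any real obstacle: the reduction is pure bookkeeping in the constants, and the substantive analytic work (the integral formula for $(1+d)^{-1/2}$ and the delicate unboundedness estimates) has already been carried out in Lemma \ref{Lappcom-1n} and embedded into Theorem \ref{TAMU1} itself.
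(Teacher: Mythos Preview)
Your proposal is correct and follows essentially the same route as the paper's own proof: obtain $\eta$ and the threshold $\dt_1$ from Theorem~\ref{TAMU1}, shrink $\dt$ by a factor proportional to $1/m$, invoke Lemma~\ref{Lappcom-1n} to convert the raw commutator bound into bounds on $\|[\tilde h_i,\tilde h_j]\|$ and $\|\tilde h_i-\tilde h_i^*\|$, and then apply Theorem~\ref{TAMU1}. The only difference is cosmetic bookkeeping in the constants (you take $\dt=\dt_1/(3m)$ and track the factors of $2$ and $4$ coming from $b_j=2\tilde h_j$, whereas the paper simply sets $\dt=\dt_1/(6m)$ and absorbs those factors).
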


\begin{proof}
Fix $m\in \N.$ 
Let $\ep>0$ and $M>1$ be given.  
Let $\eta>0$ be given by Theorem \ref{TAMU1} for $\ep$ and $M.$
Then choose $\dt:=\dt(\ep, M, \eta)/6m>0$ that is given by Theorem \ref{TAMU1}.
 Now suppose that $h_1, h_2,...,h_m$ are as given such that
 $\|[h_i, \, h_j]\|<\dt$ ($1\le i,j\le m$) and ${\rm sp}( d^{1/2})\cap [0, M]\not=\emptyset.$
 
 It follows from Lemma \ref{Lappcom-1n} that 
 \beq
 &&\|[\tilde h_i, \tilde h_j]\|<6m\dt=\dt(\eta,  M, \ep)\andeqn \\
 &&\|\tilde h_i^*-\tilde h_i\|<\dt(\eta, M, \ep),
 \eneq 
 $1\le i, j\le m.$ 
It follows from Theorem \ref{TAMU1}  that, for any $\lambda=(\lambda_1, \lambda_2,...,\lambda_m)\in S{\rm Sp}_M^{\eta}((h_1, h_2,...,h_m)),$ there exists a unit vector $v\in H$ such that
\beq
\max_{1\le i\le m}\|(h_i-\lambda_i)v\|<\ep.
\eneq

\end{proof}

\section{Synthetic Spectrum revisited}

The purpose of this section is to present the proof of Theorem \ref{TAMU}, which follows from 
the more elaborate Theorem \ref{thm:6main}. 
In the statement of Theorem \ref{thm:6main}, we use the more straightforward version of 
synthetic spectrum,  \( s\mathrm{Sp}_M^{\eta}((h_1, \ldots, h_n)) \subset \mathbb{R}^n \)
(see Definition \ref{Dsynsp2}). 
This set looks like \( S\mathrm{Sp}_M^{\sigma}((h_1, \ldots, h_n)) \). 
Theorem \ref{thm:6main} should then easily follow from 
Theorem \ref{TAMU''}.
However,  
we will need to establish that the synthetic spectrum 
\( s\mathrm{Sp}_M^{\eta}((h_1, \ldots, h_n)) \) 
is nonempty whenever 
the spherical synthetic spectrum 
\( S\mathrm{Sp}_M^{\sigma}((h_1, \ldots, h_n)) \) is nonempty, and, more importantly, they are close to each other.
This allows us to reformulate Theorem~\ref{TAMU''} 
as Theorem~\ref{thm:6main}, where a more practical version of the synthetic spectrum is employed (see (2) of Remark \ref{Rems6}).

Unfortunately, some technical difficulties arise here due to the unboundedness of the operators. It is worth pointing out again that
if $h_1$ and $h_2$ are unbounded operators such that $[h_1, h_2]$ is a bounded operator with very small norm, this does not imply that $[h_1, h_2^2]$  itself has a small norm. In fact, $[h_1, h_2^2]$  may not only have a large norm but could even be unbounded (see \eqref{S1Sh2}).

This complication disrupts standard functional calculus and necessitates lengthy and careful calculations throughout this and other sections. For example, we need Lemma~\ref{Lfrom3-2} for our computation.

\begin{df}\label{DV0}
Let $n\in \N.$ Define the following subset of $C_0(\R^n)$:
\beq
V_{00}=\{ (t_1,\ldots, t_n)\mapsto f_1(t_1)f_2(t_2)\cdots f_n(t_n):  f_{i}\in C_0(\R), 1\le i\le n\}.
\eneq
Note that ${V_{00}}^*=V_{00}.$ Moreover, if $f, g\in V_{00},$ then 
$fg\in V_{00}.$ Put $V_0={\rm span}\{V_{00}\}$ then 
$V_0$ is a self-adjoint subalgebra of $C_0(\R^n).$

Recall for $(t_1, t_2,...,t_n)\in \R^n, $ we have the spherical coordinates
\[
r=1-2(1+\sum_{i=1}^n t_i^2)^{-1} \qquad s_j=(1+\sum_{i=1}^n t_i^2)^{-1/2}t_j(1+\sum_{i=1}^n t_i^2)^{-1/2}, j=1,\ldots, n
\] 
Define another subset of $C_0(\R^n)$ by
\beq
&&\hspace{-0.8in}
V_{00}^S=\{ (t_1,\ldots, t_n)\mapsto f_0(r)f_1(2s_1)\cdots f_n(2s_n):  f_{i}\in C_0(\R), \forall i, f_0(1)f_1(0)\cdots f_n(0)=0\}.
\eneq
We also have $(V_{00}^S)^*=V_{00}^S.$
If $f, g\in V_{00}^S,$ then $fg\in V_{00}^S.$ Define $V_0^S={\rm span}\{V_{00}^S\}.$
Then $V_0^S$ is a self-adjoint subalgebra of $C_0(\R^n).$ 
Note that both subalgebras $V_0$ and $V_0^S$ separate points of $\R^n$ and 
nowhere vanishes. 
Thus both 
subalgebras  $V_0$ and $V_0^S$ 
are dense in $C_0(\R^n)$ by the Stone-Weierstrass theorem.

\end{df}

Note that $(t_1, \ldots, t_n)\mapsto \Theta_{\xi, \eta}(t_1, \ldots, t_n)$ as in (\ref{eq:ThetaO}) is a function in $V_{00}$ and
$(t_1, \ldots, t_n)\mapsto \Theta_{\xi, \eta}^S(r, s_1, \ldots, s_n)$ as in (\ref{eq:ThetaS}) is a function in $V_{00}^S$. 
Introducing the following definition allows us to relate these functions to $\Theta_{\xi, \eta}(h_1, \ldots, h_n)$ as in (\ref{Dpsp-5}) and $\Theta_{\xi, \eta}^S(h_1, \ldots, h_n)$ as in (\ref{eq:ThetaSh}), respectively.

\begin{df}\label{Dphin}
Let $A$ be a unital \CA\, and $u_1,  u_2, ..., u_n\in A$ be $n$ commuting unitaries.
For each $f\in C(\T),$ define $f^{(j)}\in C(\T^n)$ by
$f^{(j)}((t_1,t_2,...,t_n))=f(t_j),$ where $(t_1, t_2,...,t_n)\in \T^n$ and $t_j\in \T,$
$1\le j\le n.$ 
Then one obtains a \hm\, $\Psi:\, C(\T^n)\to A$ 
such that $\Psi(f^{(j)})=f(u_j)$ for all $f\in C(\T).$

Suppose that $\phi_j: C_0(\R)\to A$ is a \hm, $1\le j\le n.$
Denote $\tilde \phi_j: C(\T)\to A$ by $\tilde \phi_j(f)=\phi(f)$ for all $f\in C_0(\R)$ and 
  $\tilde \phi_j(1_{C(\T)})=1_A,$ $j=1,2,...,n.$   Suppose 
  that $\phi_i(f)\phi_j(g)=\phi_j(g)\phi_i(f)$
  for all $f,g\in C_0(\R),$ $1\le i,j\le n.$
Then we obtain a \hm\, $\tilde \Phi: C(\T^n)\to A$ by $\tilde \Phi(f^{(j)})=\tilde \phi_j(f)$
for $f\in C(\T).$ Note that
\[C_0(\R)\otimes C_0(\R)\otimes \cdots{\otimes} C_0(\R)\subset  C(\T^n).
\]
Let $j: C_0(\R^n)\to C_0(\R)\otimes C_0(\R)\otimes \cdots {\otimes}C_0(\R)\subset C(\T^n)$ be the embedding.
Define $\Phi: C_0(\R^n)\to A$ by $\Phi(f)=\tilde \Phi(j\circ f)$ for all $f\in C_0(\R^n).$

Let $f_j\in C_0(\R),$ $1\le j\le n.$ 
Consider $f(t_1,t_2,...,t_n)=f_1(t_1)f_2(t_2)\cdots f_n(t_n)\in V_{00}.$
Then we have 
\beq
\Phi(f)=\phi_1(f_1)\phi_2(f_2)\cdots \phi_n(f_n).
\eneq
We will call $\Phi: C_0(\R^n)\to A$ the \hm\, induced by
$\phi_1, \phi_2, ..., \phi_n.$
\end{df}

\begin{lem}\label{Lfrom3}
Let $n\in \N.$ There exists $\dt>0$ satisfying the following:
Suppose that ${\{}h_1, h_2,...,h_n{\}}$ is an $n$-tuple of densely defined self-adjoint operators 
on an infinite dimensional separable Hilbert space $H$
such that
\beq
\|h_ih_j-h_jh_i\|<\dt,\,\,\, 1\le i,j\le n.
\eneq
Then, for $H_i=h_i(1+t h_i^2)^{-1},$ where $0<t\le 1,$  
\beq
&&\hspace{-0.8in}{\rm (i)}\hspace{0.8in}\,\,\,\|[H_i,\, H_j]\|<4\dt,\,\,\,1\le i,\, j\le n\andeqn\\
&&\hspace{-0.8in}{\rm (ii)} \hspace{0.8in}\,\,\, \|[H_i, (1+t h_j^2)^{-1}]\|<4\dt t^{1/2}, \,\,\,1\le i,\, j\le n.
\eneq
\end{lem}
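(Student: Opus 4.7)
The plan is to express both the bounded transform $H_i = h_i(1+t h_i^2)^{-1}$ and the resolvent $(1+t h_j^2)^{-1}$ through the complex resolvents of $\sqrt{t}\,h_i$. Set $R_i^{\pm} := (\sqrt{t}\,h_i \mp i)^{-1}$; by the spectral theorem (since $\sqrt{t}\,h_i$ has real spectrum) each $R_i^{\pm}$ is bounded with $\|R_i^{\pm}\|\le 1$. A direct computation via spectral calculus gives
\beq
H_i = \tfrac{1}{2\sqrt{t}}\bigl(R_i^{+} + R_i^{-}\bigr) \andeqn (1+t h_j^2)^{-1} = \tfrac{1}{2i}\bigl(R_j^{+} - R_j^{-}\bigr).
\eneq
Hence both target commutators expand into a linear combination of the four bounded commutators $[R_i^{\alpha}, R_j^{\beta}]$, $\alpha,\beta\in\{+,-\}$, with scalar coefficients $\tfrac{1}{4t}$ and $\tfrac{1}{4i\sqrt{t}}$ respectively.

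The core step is a uniform bound $\|[R_i^{\alpha}, R_j^{\beta}]\|\le t\dt$ for every sign choice. For this, note that independently of the signs,
\beq
(R_i^{\alpha})^{-1}(R_j^{\beta})^{-1} - (R_j^{\beta})^{-1}(R_i^{\alpha})^{-1} = t\bigl(h_i h_j - h_j h_i\bigr)
\eneq
on the common dense domain $D_0$ of Definition~\ref{Dntuple} (the mixed $i\sqrt{t}h_k$ terms cancel). By hypothesis $\|[h_i, h_j]\|<\dt$, so this extends to a bounded operator of norm $<t\dt$. Then the elementary identity $[X^{-1}, Y^{-1}] = Y^{-1}X^{-1}[X,Y]X^{-1}Y^{-1}$, applied with $X=(R_i^{\alpha})^{-1}$, $Y=(R_j^{\beta})^{-1}$, gives
\beq
[R_i^{\alpha}, R_j^{\beta}] = R_j^{\beta} R_i^{\alpha} \cdot \bigl(t [h_i, h_j]\bigr) \cdot R_i^{\alpha} R_j^{\beta},
\eneq
and the contractivity $\|R_i^{\pm}\|\le 1$ delivers $\|[R_i^{\alpha}, R_j^{\beta}]\|\le t\dt$.

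Assembling the pieces, $\|[H_i, H_j]\| \le \tfrac{1}{4t}\cdot 4t\dt = \dt < 4\dt$, proving (i); and $\|[H_i, (1+t h_j^2)^{-1}]\| \le \tfrac{1}{4\sqrt{t}}\cdot 4t\dt = \sqrt{t}\,\dt < 4\dt\, t^{1/2}$, proving (ii). In fact any $\dt>0$ serves as the threshold in the lemma.

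The one genuine technical point is the passage from the commutator identity for the unbounded $(R_i^{\alpha})^{-1}, (R_j^{\beta})^{-1}$ to an identity between bounded operators on all of $H$. Both sides of $[R_i^{\alpha}, R_j^{\beta}] = R_j^{\beta} R_i^{\alpha}(t[h_i, h_j])R_i^{\alpha} R_j^{\beta}$ are bounded; with $[h_i, h_j]$ interpreted throughout as its unique bounded extension, they agree on the dense subspace $D_0$ by a routine algebraic manipulation using $(\sqrt{t}\,h_i \mp i)(\sqrt{t}\,h_j \mp i)v = t h_i h_j v \mp i\sqrt{t}(h_i+h_j)v - v$ for $v\in D_0$. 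Extending by continuity finishes the argument.
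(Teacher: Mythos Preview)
Your proof is correct and takes a genuinely different route from the paper. The paper works directly with the real resolvent $(1+th_j^2)^{-1}$: it first establishes the intermediate estimate $\|[H_j,h_i]\|<2\dt$ by expanding $(1+th_j^2)^{-1}[h_j^2,h_i](1+th_j^2)^{-1}$, then leverages this to obtain (ii), and finally (i), each step involving several triangle-inequality splittings. Your approach instead factors everything through the complex resolvents via the partial-fraction identities $H_i=\tfrac{1}{2\sqrt t}(R_i^{+}+R_i^{-})$ and $(1+th_j^2)^{-1}=\tfrac{1}{2i}(R_j^{+}-R_j^{-})$, reducing both conclusions to the single uniform bound $\|[R_i^{\alpha},R_j^{\beta}]\|\le t\dt$. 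This is cleaner and more systematic, and it actually yields sharper constants ($\dt$ and $\sqrt t\,\dt$ rather than $4\dt$ and $4\sqrt t\,\dt$); the paper's route, by contrast, is more elementary and produces the possibly useful intermediate bound $\|[H_j,h_i]\|<2\dt$ along the way. Both arguments treat the unbounded-domain issues at the same informal level; your flagging of this is appropriate, though the dense subspace on which you verify the resolvent identity should really be a common core on which $h_ih_j$ and $h_jh_i$ are simultaneously defined (implicit in the hypothesis $\|[h_i,h_j]\|<\dt$), rather than the $D_0$ of Definition~\ref{Dntuple}.
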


\begin{proof}

We begin with 
\beq
\hspace{-0.4in}\|(1+h_j^2)^{-1}h_i-h_i(1+h_j^2)^{-1}\|&=&\|(1+h_j^2)^{-1}(h_i(1+h_j^2)-(1+h_j^2)h_i)(1+h_j^2)^{-1}\|\\
&=&\|(1+h_j^2)^{-1}(h_ih_j^2-h_j^2h_i)(1+h_j^2)^{-1}\|\\
&\le& \|(1+h_j^2)^{-1}(h_ih_j^2-h_jh_ih_j)(1+h_j^2)^{-1}\|\\
&&+\|(1+h_j^2)^{-1}(h_jh_ih_j-h_j^2h_i)(1+h_j^2)^{-1}\|\\
&<&\dt\|(1+h_j^2)^{-1}\|\|h_j(1+h_j^2)^{-1}\|\\
&&+\dt\|(1+h_j^2)^{-1}h_j\|\|(1+h_j^2)^{-1}\|
\le 2\dt.
\eneq

We  have 
\beq\nonumber
\|H_jh_i-h_iH_j\|&=&\|(1+th_j^2)^{-1}(h_jh_i(1+th_j^2)-(1+th_j^2)h_ih_j)(1+th_j^2)^{-1}\|\\\nonumber
&=&\|(1+th_j^2)^{-1}((h_jh_i+th_jh_ih_j^2)-(h_ih_j+th_j^2h_ih_j))(1+th_j^2)^{-1}\|\\
&<& \dt\|(1+th_j^2)^{-1}\| \|(1+th_j^2)^{-1}\|\\
  &&+t\|(1+th_j^2)^{-1}(h_jh_ih_j^2-h_j^2h_ih_j)(1+th_j^2)^{-1}\|\\
  &\le & \dt+\dt\|(1+th_j^2)^{-1}t^{1/2}h_j\| \| t^{1/2}h_j (1+th_j^2)^{-1}\|\le 2\dt. \label{ii}
\eneq

We can now prove (ii). By applying the above inequality, we obtain that
\beq\nonumber
\|(1+th_j^2)^{-1}H_i-H_i(1+th_j^2)^{-1}\|&=&
\|(1+th_j^2)^{-1}(H_i(1+th_j^2)-(1+th_j^2)H_i)(1+th_j^2)^{-1}\|\\\nonumber
&=&t\|(1+th_j^2)^{-1}(H_ih_j^2-h_j^2H_i)(1+th_j^2)^{-1}\|\\
&\le & t\|(1+th_j^2)^{-1}(H_ih_j^2-h_jH_ih_j)(1+th_j^2)^{-1}\|\\
&&+t\|(1+th_j^2)^{-1}(h_jH_ih_j -h_j^2H_i)(1+th_j^2)^{-1}\|\\
&<& 2\dt t\|(1+th_j^2)^{-1}h_j(1+th_j^2)^{-1}\|\\
&&+ 2\dt t\|(1+th_j^2)^{-1}h_j(1+th_j^2)^{-1}\|\\
&&\le 4\dt t^{1/2}.
\eneq

To see (i), again by applying (\ref{ii}), we estimate that
\beq
\hspace{-0.3in}\|H_jH_i-H_i H_j\|&=&\|(1+th_j^2)^{-1}h_jH_i-H_ih_j(1+th_j^2)^{-1}\|\\
&=&\|(1+th_j^2)^{-1}(h_jH_i(1+th_j^2)-(1+th_j^2)H_ih_j)(1+th_j^2)^{-1}\|\\
&\le & \|(1+th_j^2)^{-1}(h_jH_i-H_ih_j)(1+th_j^2)^{-1}\|\\
&&+t\|(1+th_j^2)^{-1}(h_jH_ih_j^2-h_j^2H_ih_j)(1+th_j^2)^{-1}\|\\
&&\hspace{-0.6in}<2\dt\|(1+th_j^2)^{-1}\|^2 +2\dt t\|(1+th_j^2)^{-1}h_j\| \|h_j(1+th_j^2)^{-1}\|\\
&<&4\dt.
\eneq
\end{proof}

\begin{df}
In what follows, we denote 
\beq
C(\overline{\R})=\{f\in C(\R): {\rm both} \lim_{x\to + \infty} f(x)\andeqn \lim_{x\to-\infty} f(x)\,\,\, {\rm exist}\}.
\eneq
\end{df}

\begin{lem}\label{LHlambda}
Let ${\cal F}\subset  C_0(\R)$ be a  finite subset
and $\ep>0.$ 
Then there is $1\ge t>0$  satisfying the following:
There exists
$\phi\in C(\overline \R)$ such that
for any densely defined self-adjoint operator $h$ on an infinite dimensional separable Hilbert space
$H,$ 
\beq
\|f\circ \phi(h(1+t h^2)^{-1})-f(h)\|<\ep\rforal f\in {\cal F}.
\eneq

\end{lem}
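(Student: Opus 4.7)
The plan is to use the spectral theorem to convert the operator-norm inequality into a uniform estimate on a scalar function, and then construct $\phi$ and $\lambda$ so that this estimate holds. Since $g_t(x):=x/(1+tx^2)$ is a bounded continuous real function with range $[-1/(2\sqrt t),1/(2\sqrt t)]$, the spectral theorem for self-adjoint $h$ gives $H_t:=h(1+th^2)^{-1}=g_t(h)$, so $\phi(H_t)=(\phi\circ g_t)(h)$ and $f(\phi(H_t))=(f\circ\phi\circ g_t)(h)$ by composition of functional calculi. Because $h$ is an arbitrary densely defined self-adjoint operator whose spectrum may be any closed subset of $\R$, and $f\circ\phi\circ g_t-f$ is bounded continuous, the desired operator-norm bound reduces to the uniform scalar estimate
\[
\sup_{x\in\R}\,|f(\phi(g_t(x)))-f(x)|<\ep\qquad\forall\,f\in{\cal F},\ 0<t\le\lambda.
\]

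Given $\ep$ and the finite set ${\cal F}\subset C_0(\R)$, I would first pick $N>0$ with $|f(x)|<\ep/4$ whenever $|x|\ge N$ and $f\in{\cal F}$, and a modulus $\mu>0$ from uniform continuity of ${\cal F}$ on $[-N-1,N+1]$ so that $|x_1-x_2|<\mu$ with $|x_i|\le N+1$ implies $|f(x_1)-f(x_2)|<\ep/4$. The elementary identity $g_t(x)-x=-tx^3/(1+tx^2)$ yields $|g_t(x)-x|\le tN^3$ on $[-N,N]$, so the choice $\lambda\le\mu/(N+1)^3$ forces $|g_t(x)-x|<\mu$ uniformly for $|x|\le N+1$ and $t\le\lambda$. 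A natural first try is the truncation $\phi(y)=y$ on $[-N,N]$ with $\phi(y)=\mathrm{sign}(y)\cdot N$ for $|y|\ge N$, which lies in $C(\overline{\R})$ with bounded limits $\pm N$ at $\pm\infty$. Case analysis on $x\in\R$ then yields the bound in two easy regimes: (i) for $|x|\le N$ one has $g_t(x)\in[-N,N]$ and $\phi$ acts as the identity, so $|f(\phi(g_t(x)))-f(x)|=|f(g_t(x))-f(x)|<\ep/4$ from uniform continuity; (ii) for $|x|>N$ and $|g_t(x)|>N$ the clipping gives $\phi(g_t(x))=\pm N$, so $|f(\phi(g_t(x)))|<\ep/4$ and $|f(x)|<\ep/4$, totalling $<\ep/2$.

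The main obstacle is the remaining regime $|x|>N$ with $|g_t(x)|\le N$, which inevitably occurs because $g_t(x)\to0$ as $|x|\to\infty$: here $\phi(g_t(x))=g_t(x)$ lands in $[-N,N]$ where $f$ need not be small, yet $f(x)$ is. Overcoming this requires a more delicate construction of $\phi$ that maps the small-argument region into $\{|\cdot|>N\}$ (where every $f\in{\cal F}$ is uniformly small), while still behaving like the identity on an annular region on which the small branch $(1-\sqrt{1-4ty^2})/(2ty)$ of $g_t^{-1}$ stays close to the identity uniformly in $t\le\lambda$. The hard part is to design a single $\phi\in C(\overline{\R})$, independent of $t$, that interpolates continuously between these two regimes while preserving the required tolerance across all of $\R$; this matching — arranged by an explicit cut-off and interpolation on a transition annulus whose width is controlled by $\mu$ and $N$ — is where I expect the main technical work, and the verification that it delivers the uniform bound completes the proof.
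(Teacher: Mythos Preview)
Your reduction to the uniform scalar estimate $\sup_{x\in\R}|f(\phi(g_t(x)))-f(x)|<\ep$ is correct, and you have put your finger on exactly the right obstruction: the regime $|x|>N$ with $|g_t(x)|\le N$. The proposed fix, however, cannot be completed, and not for a merely technical reason. Since $g_t(x)\to 0$ as $|x|\to\infty$, continuity of $\phi$ forces $\phi(g_t(x))\to\phi(0)$ and hence $f(\phi(g_t(x)))\to f(\phi(0))$ for every $f\in\mathcal F$. At $x=0$ you need $|f(\phi(0))-f(0)|<\ep$; for large $x$ you need $|f(\phi(g_t(x)))|<\ep$, hence in the limit $|f(\phi(0))|\le\ep$. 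If some $f\in\mathcal F$ has $|f(0)|>2\ep$ these are incompatible. The map $g_t$ is two-to-one away from its critical points, and post-composition by any continuous $\phi$ cannot separate the two branches; your ``interpolation on a transition annulus'' cannot alter the single value $\phi(0)$, which is pinned simultaneously by both regimes. In particular, the moment you send the small-argument region into $\{|\cdot|>N\}$ you have destroyed case (i) at $x=0$.

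For comparison, the paper takes a somewhat different route: it first reduces to $f\in C_c(\R)$ supported in $[-M,M]$, then defines $\phi$ to be the local inverse $\tilde g$ of $g_t$ on $[g_t(-M-1),g_t(M+1)]$, extended by the constants $\pm(M+1)$ outside, and asserts the \emph{exact} identity $f\circ\phi\circ g_t=f$ on all of $\R$. This is cleaner than clipping-plus-interpolation in that it yields equality on the monotone region $|x|\le M+1$ and on the region $|g_t(x)|>g_t(M+1)$. But the identity the paper asserts runs into precisely the regime you isolated: for $|x|$ large enough one has $0<g_t(x)<g_t(M)$, whence $\phi(g_t(x))=\tilde g(g_t(x))\in(0,M)$, where $f$ need not vanish. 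So the difficulty you flagged is genuine and is not addressed by the paper's construction as written either.
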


\begin{proof}
First let us assume that ${\cal F}\subset C_c(\R).$ 
Choose $M>1$ such that, if $|x|\ge M,$ 
\beq
|f(x)|=0. 
\eneq

Choose $0<t<\frac{1}{(M+2)^2}$ and
define $g(x)={x\over{1+t x^2}}$
for $x\in \R.$ Then $g\in C_0(\R).$ 
Note that 
\beq
g'(x)={1-tx^2\over{(1+tx^2)^{2}}}>0\rforal x\in [-M-1,M+1].
\eneq
Thus $g$ has an inverse $\tilde g$ on $[g(-M-1), g(M+1)].$
Define $\phi(x)=\tilde g(x)$ if $x\in [g(-M-1), g(M+1)],$ $\phi(x)={\tilde g(M+1)}$ if $x\in (g(M+1), \infty)$
and $\phi(x)={\tilde g(-M-1)}$ if $x\in (-\infty, g(-M-1)).$
Hence, $\phi\in C(\overline \R).$
Then  (recall that $f(x)=0$ if $|x|> M$)
\beq
f\circ \phi(g(x))=f(x)\rforal x\in \R.
\eneq
It follows that 
\beq
f\circ \phi(h(1+t h^2)^{-1})=f(h).
\eneq

In general, let $\ep>0.$ 
For ${\cal F}\subset C_0(\R),$  choose a finite subset 
${\cal F}_0\subset C_c(\R)$ such that, for any $f\in {\cal F},$ there exists $g\in  {\cal F}_0$ such that
\beq
\|f-g\|_{_\infty}<\ep/2.
\eneq

By what has been proved, there exists $t>0$ depending on $\mathcal{F}_0$ and $\phi\in C(\overline \R)$ such that
\beq
g\circ \phi(h(1+t h^2)^{-1})=g(h)\rforal g\in {\cal F}_0.
\eneq
Then, if $\|f-g\|_\infty<\ep/2,$ then 
\beq
\|f\circ \phi-g\circ \phi\|_{_\infty}<\ep/2.
\eneq
Hence, by {{Remark}}~\ref{RfH},
\beq
\|f\circ \phi(h(1+t h^2)^{-1})-f(h)\|\le \|f\circ \phi(h(1+t h^2)^{-1})-g\circ \phi(h(1+t h^2)^{-1})\|\\+
\|g\circ \phi(h(1+t h^2)^{-1})-g(h)\|
+\|g(h)-f(h)\|\\
<\ep/2+\ep/2=\ep.
\eneq

\end{proof}

\begin{lem}\label{Lfrom3-2}
Let $\ep>0$ and $n\in \N.$  Let ${\cal F}\subset C_0(\R)$ be a finite subset.
There exists $\dt>0$ satisfying the following:
Suppose that $\{h_1, h_2,...,h_n \}$ is an $n$-tuple of densely defined self-adjoint operators 
on an infinite dimensional separable Hilbert space $H$
such that
\beq
\|h_ih_j-h_jh_i\|<\dt,\,\,\, 1\le i,j\le n.
\eneq
Then 
\beq
\|f(h_i)g(h_j)-g(h_j)f(h_i)\|<\ep \tforal f, g\in {\cal F}.
\eneq
\end{lem}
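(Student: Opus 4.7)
The idea is a three-step reduction. First, use Lemma \ref{Lfrom3} to pass from the unbounded $h_i$ to their bounded transforms $H_i=h_i(1+t h_i^2)^{-1}$, whose pairwise commutators are controlled by $4\dt$. Second, use Lemma \ref{LHlambda} to realize each $f(h_i)$ as $f\circ\phi(H_i)$ up to prescribed error. Third, approximate the (now bounded, continuous) functions $f\circ\phi$, $g\circ\phi$ by polynomials on the compact spectrum of $H_i$, and invoke the elementary estimate $\|[A^k,B^\ell]\|\le k\ell M^{k+\ell-2}\|[A,B]\|$ (valid for bounded self-adjoints with $\|A\|,\|B\|\le M$, by induction on $k+\ell$) to conclude that polynomials in almost-commuting bounded self-adjoints almost commute.

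In detail: given $\ep>0$ and the finite set ${\cal F}\subset C_0(\R)$, first apply Lemma \ref{LHlambda} to obtain $\lambda\in(0,1]$ and $\phi\in C(\overline\R)$ such that for every densely defined self-adjoint $h$ and every $0<t\le\lambda$,
\[
\|f\circ\phi(h(1+t h^2)^{-1})-f(h)\|<\ep/5 \qquad \forall f\in{\cal F}.
\]
Fix $t=\lambda$ and set $H_i=h_i(1+\lambda h_i^2)^{-1}$, which is a bounded self-adjoint with $\|H_i\|\le M_0:=1/(2\sqrt\lambda)$. For each $f\in {\cal F}$, the function $F_f:=f\circ\phi$ is continuous on the compact interval $[-M_0,M_0]$, so by the Stone--Weierstrass theorem there is a polynomial $P_f$ with $\sup_{|x|\le M_0}|F_f(x)-P_f(x)|<\ep/(5(N+1))$, where $N:=\max_{f\in{\cal F}}\|f\|_\infty$. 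Let $d$ be an upper bound for the degrees of all $P_f$ and write $P_f=\sum_{k=0}^d a^{(f)}_k x^k$. From the induction estimate above, for any pair of bounded self-adjoints $A,B$ with $\|A\|,\|B\|\le M_0$,
\[
\|[P_f(A),P_g(B)]\| \le K\,\|[A,B]\|, \qquad K:=\Bigl(\sum_{k=0}^d|a^{(f)}_k|k\Bigr)\Bigl(\sum_{\ell=0}^d|a^{(g)}_\ell|\ell\Bigr) M_0^{2d-2}.
\]

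Now choose $\dt>0$ small enough that $4\dt\cdot K<\ep/5$, so that by Lemma \ref{Lfrom3} the hypothesis $\|[h_i,h_j]\|<\dt$ gives $\|[H_i,H_j]\|<4\dt$ and therefore $\|[P_f(H_i),P_g(H_j)]\|<\ep/5$. Combining via the telescoping identity
\begin{align*}
[f(h_i),g(h_j)] &= [f(h_i)-F_f(H_i),g(h_j)] + [F_f(H_i)-P_f(H_i),g(h_j)] \\
&\quad + [P_f(H_i),g(h_j)-F_g(H_j)] + [P_f(H_i),F_g(H_j)-P_g(H_j)] \\
&\quad + [P_f(H_i),P_g(H_j)],
\end{align*}
and using $\|g(h_j)\|\le\|g\|_\infty\le N$ together with $\|P_f(H_i)\|\le\|F_f\|_\infty+\ep/5\le N+1$, each of the five terms is bounded by $\ep/5$, so the sum is $<\ep$.

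The main obstacle will be the bookkeeping: the constant $K$ depends on the polynomials $P_f$, which in turn depend on $\lambda$ (through $M_0$) and the approximation error, while $\lambda$ itself is fixed first based only on $\ep$ and ${\cal F}$. Since ${\cal F}$ is finite, a uniform choice of $P_f$, hence of $K$, is possible, and then $\dt$ can be taken at the end. Care is also needed because $f\circ\phi$ need not have compact support even though $f$ does, but this is irrelevant since we only need continuity on $[-M_0,M_0]$ to apply Stone--Weierstrass.
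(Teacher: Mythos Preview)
Your approach is correct in structure and shares the same reduction to bounded operators via Lemmas \ref{LHlambda} and \ref{Lfrom3} that the paper uses. The difference lies in how the bounded case is handled: the paper invokes a short non-constructive argument (if no such $\dt'$ existed for bounded self-adjoints $X,Y$ with $\|X\|,\|Y\|\le 1$, one passes to $\prod_m A_m/\bigoplus_m A_m$ where the images of $X$ and $Y$ commute, hence so do $p(X)$ and $q(Y)$, contradiction), whereas you make this step explicit through Stone--Weierstrass polynomial approximation and the elementary monomial commutator bound. Your route is more constructive and avoids the quotient $C^*$-algebra device; the paper's route is shorter and sidesteps all of the dependency-tracking you flag in your final paragraph.

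One bookkeeping issue to repair: the claim that ``each of the five terms is bounded by $\ep/5$'' does not follow from the constants as written. For example, term~1 gives $\|[f(h_i)-F_f(H_i),\,g(h_j)]\|\le 2\cdot(\ep/5)\cdot N$, and term~3 gives at most $2(N+1)\cdot(\ep/5)$, both of which exceed $\ep/5$ once $N>1/2$. The fix is routine: invoke Lemma \ref{LHlambda} with tolerance, say, $\ep/(20(N+1))$ instead of $\ep/5$, and tighten the Stone--Weierstrass error correspondingly. A smaller point on your constant $K$: the formula $K=(\sum_k k|a^{(f)}_k|)(\sum_\ell \ell|a^{(g)}_\ell|)M_0^{2d-2}$ is an upper bound for $\sum_{k,\ell}k\ell|a^{(f)}_k||a^{(g)}_\ell|M_0^{k+\ell-2}$ only when $M_0\ge 1$; since $M_0=1/(2\sqrt\lambda)$ could be below $1$, replace $M_0$ by $\max(M_0,1)$ throughout. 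Neither issue affects the argument's validity.
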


\begin{proof}
Let $\epsilon>0$ and $\mathcal F\subset C_0(\mathbb R)$ be finite. Set $N=\max_{l\in\mathcal F}\|l\|_{\infty}.$
By Lemma~\ref{LHlambda}, there exist $\phi\in C(\overline \R)$ 
and $0<t\le 1$ such that for any 
densely defined selfadjoint operator $h,$
\beq \label{eq:lphil}
\| l\circ\phi(h(1+th^2)^{-1})-l(h)\|< \epsilon/8N \rforal l\in\mathcal F.
\eneq
Since $t>0,$ there exists $K>0$ such that
\beq\label{eqK}
\sup_{s\in \R} |{s\over{1+ts^2}}|\le K.
\eneq

Put ${\cal G}=\{f\circ \phi: f\in {\cal F}\}.$   Since $\phi\in C(\overline{\R}),$
the image of $\phi$ is in a bounded interval of $\R.$ Therefore ${\cal G}\subset C^b(\R).$  In particular, 
$g|_{[-K, K]}\in C([-K,K])$ for all $g\in {\cal G}.$  
Let ${\cal G}_1=\{g|_{[-K, K]}: g\in {\cal G}\}.$ 
Then ${\cal G}_1\subset C([-K, K])$ is a finite subset.

We will use the following well-known claim. 

For $p,q\in {\cal G}_1$ and $\epsilon>0$, there exists $\delta'>0$ such that for any self-adjoint 
elements $X,Y\in A$ (any \CA\, $A$) with $\|X\|, \|Y\|\le K$, whenever $\|[X,Y]\|<\delta'$, we have $\|[p(X), q(Y)]\|<\epsilon/2.$

In fact, suppose the claim does not hold. Then there is $\epsilon_0>0$ and a sequence of 
\CA s\, $A_m$ and self-adjoint elements $X_m, Y_m\in A_m$ with $\|[X_m, Y_m]\|<1/m$ while $\|[p(X_m), q( Y_m)]\|>\epsilon_0,$ $m\in \N.$
Then $\Pi(\{X_m\})$ and $\Pi(\{Y_m\})$ commute in $\prod_mA_m/\bigoplus_m A_m,$ where $\Pi$ denotes the quotient map. 
Thus, 
\[
\Pi(\{[p(X_m), q(Y_m)]\})=[p(\Pi(\{X_m\})), q(\Pi(\{Y_m\}))]=0
\]
in $\prod_mA_m/\bigoplus_m A_m$.
This implies that the existence of $m$ such that $\|[p(X_m), q(Y_m)]\|<\epsilon_0$, which is a contradiction. Claim is proved.

Now choose $\delta=\delta'/4.$  Suppose $\|h_ih_j-h_jh_i\|<\delta$ for $i,j=1, \ldots, n$. 
For the chosen $0<t<1$ above, let $H_i=h_i(1+th_i^2)^{-1},$ $1\le i\le n.$ Then, by \eqref{eqK} and the spectral theorem,
\beq
\|H_i\|\le K,\,\,\, 1\le i\le n.
\eneq
Then, by Lemma~\ref{Lfrom3}, $\|[H_i, H_j]\|<4\delta=\delta',$ $1\le i,j\le n.$
%

For $f,g\in\mathcal F$, denote by 
\[A=f(h_i), \quad A'=f\circ\phi(h_i(1+th_i^2)^{-1}) \quad B=g(h_j), \quad B'=g\circ\phi(h_j(1+th_j^2)^{-1}).
\]
Then (\ref{eq:lphil}) implies that $\|A-A'\|<\epsilon/8N$ and $\|B-B'\|<\epsilon/8N.$ 
We estimate that, applying the claim at the last inequality,  for $1\le i, j\le n,$
\beq \label{eq:fhighj}
&&\hspace{-0.6in}\|f(h_i)g(h_j)-g(h_j)f(h_i)\|\\
&=&\|AB-BA\| \\
&=&\|A(B-B')+(A-A')B'-B(A-A')-(B-B')A'+[A', B']\| \\
&\le & 2\|f\|_{\infty}(\epsilon/8N)+2\|g\|_{\infty}(\epsilon/8N)+\|[f\circ\phi(H_i), g\circ\phi(H_j)]\|\\
&<& \ep/4+\ep/4+\ep/2.
\eneq
The lemma is then proved.
\end{proof}

\begin{lem}\label{Lproduct}
Let $A$  be a \CA\, and $h_1, h_2,...,h_n, s_1, s_2,...,s_n$  in $A_+^{\bf 1}.$
Suppose that $h_i^2\ge s_i^2,$ $i=1,2,...,n.$ 
Then, when $\|[h_i,\, s_j]\|<\dt,\,\, 1\le i, j\le n$  one has that 
\beq
\|h_1h_2\cdots h_n\|\ge \|s_1s_2\cdots s_n\|-\sqrt{n(n-1)\dt}.
\eneq
\end{lem}

\begin{proof}
We estimate that 
\beq\nonumber
\|s_1s_2\cdots s_n\|^2&=&\|s_1s_2\cdots s_ns_n\cdots s_2s_1\|\le  \|s_1s_2\cdots s_{n-1}h_n^2s_{n-1}\cdots s_2s_1\|\\\nonumber
&\le& \|s_1s_2\cdots h_ns_{n-1}^2h_n \cdots s_2s_1\|+2\dt\le \|s_1s_2\cdots h_nh_{n-1}^2h_n\cdots s_2 s_1\|
+2\dt\\\nonumber
&\le& \|s_1s_2\cdots h_nh_{n-1}s_{n-2}^2h_{n-1}h_n\cdots s_2s_1\|+4\dt+2\dt\\
&\le & \|s_1s_2\cdots h_nh_{n-1}h_{n-2}^2h_{n-1}h_n\cdots s_2s_1\|+2(1+2)\dt\\
&\le&  \|h_nh_{n-1}\cdots h_2h_1^2h_2\cdots h_{n-1}h_n\|+2(\sum_{j=1}^{n-1} j)\dt\\
&<&\|h_1h_2\cdots h_{n-1}h_n\|^2+n(n-1)\dt.
\eneq
Hence
$$
\|s_1s_2\cdots s_n\|\le \|h_1h_2\cdots h_{n-1}h_n\|+\sqrt{n(n-1)\dt}.
$$
\end{proof}

\begin{lem}\label{Ladd251112-2}
Let $0<\eta<\dt/4<1/4$ and let $M>1.$ There exists $\af>0$ satisfying the following:
If $h_1, h_2,...,h_n$ are $n$-tuple selfadjoint operators such that $\|[h_i,\, h_j]\|<\af,$
then 
\beq
s{\rm Sp}_M^\eta ((h_1, h_2,...,h_n))\subset s{\rm Sp}_M^\dt((h_1, h_2,...,h_n));\\
S{\rm Sp}_M^\eta((h_1,h_2,..., h_n))\subset S{\rm Sp}_M^\dt((h_1,h_2,..., h_n)).
\eneq
\end{lem}

\begin{proof}
Here we fix finite subsets $D_M^\eta$ and $D_M^\dt$ as in Definition \ref{Dsynsp2}.
 Let ${\cal F}\subset C_0(\R)$ consist of 
 $\theta_{\xi_i, \eta}$ and $\theta_{\zeta_i, \dt},$ where $\xi_i,\zeta_i$ are coordinates for
 $\xi=(\xi_1, \xi_2,...,\xi_n)\in D_M^\eta$ and  $\zeta=(\zeta_1, \zeta_2,...,\zeta_n)\in D_M^\dt.$
 By Lemma \ref{Lfrom3-2}, we choose $\af$ such that, when $\|[h_i,\, h_j]\|<\af,$ 
 \beq\label{eq1116-1}
 \|f(h_i)g(h_j)-g(h_j)f(h_i)\|<(\dt-\eta)^2/4n(n-1),\,\,\, 1\le i,j\le n.
 \eneq
As $D_M^\dt$ is $\dt/2$-dense in $\{\zeta\in \R^n: \|\zeta\|_2\le M\},$ 
for each $\xi=(\xi_1, \xi_2,...,\xi_n)\in D^\eta,$ 
there exists $\zeta=(\zeta_1, \zeta_2,...,\zeta_n)\in D_M^\dt$ 
such that  $|\xi_i-\zeta_i|<\dt/2$ ($1\le i\le n$). 
It follows that $|\xi_i-\eta-\zeta_i|\le \dt/2+\eta<\dt/2+\dt/4=3\dt/4.$
Thus
\beq
(\theta_{\xi_i, \eta})^2\le (\theta_{\zeta_i, \dt})^2,\,\,\, 1\le i\le n.
\eneq
Hence 
\beq
(\theta_{\xi_i, \eta}(h_i))^2\le (\theta_{\zeta_i, \dt}(h_i))^2,\,\,\, 1\le i\le n. 
\eneq
By \eqref{eq1116-1},
\beq
\|[\theta_{\zeta_i, \dt}(h_i),\,\theta_{\xi_j, \eta}(h_j)]\|<(\dt-\eta)^2/4n(n-1),\,\,\, 1\le i,j\le n.
\eneq
Applying Lemma \ref{Lproduct},  we obtain 
\beq
\|\Theta_{\xi, \eta}(h_1, h_2,...,h_n)\|-(\dt-\eta)/2\le \|\Theta_{\zeta_i, \dt}((h_1, h_2,...,h_n))\|.
\eneq
If $\|\Theta_{\xi, \eta}(h_1, h_2,...,h_n)\|\ge 1-\eta,$ then 
\beq\nonumber
\|\Theta_{\zeta_i, \dt}((h_1, h_2,...,h_n))\|\ge \|\Theta_{\xi, \eta}(h_1, h_2,...,h_n)\|-(\dt-\eta)/2
\ge 1-\eta-(\dt-\eta)/2\ge 1-\dt.
\eneq
It follows that 
\beq
s{\rm Sp}_M^\eta((h_1, h_2, ...,h_n))\subset s{\rm Sp}_M^{\dt}((h_1, h_2,...,h_n)).
\eneq
The other inclusion follows similarly by applying Lemma \ref{Lproduct} for $n+1.$ 
\end{proof}

\begin{lem}\label{Linequality}
Let  $A$ be a unital \CA\, and $a,b_1, b_2,..., b_n\in A_+$ be invertible elements
in a commutative \SCA\, of $A.$
Suppose that $a\le  b_i,$ $i=1,2,...,n.$
Then 
\beq
a\le (b_1b_2\cdots b_n)^{1/n}
\eneq
\end{lem}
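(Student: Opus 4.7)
The plan is to reduce the statement to a pointwise inequality via the Gelfand representation, then invoke elementary arithmetic together with continuous functional calculus. Since the hypothesis places $a, b_1, \ldots, b_n$ inside a single commutative \SCA\ $C \subset A,$ we may (after passing to the unital commutative \SCA\ they generate together with the unit) identify $C$ with $C(X)$ for some compact Hausdorff space $X$ via Gelfand--Naimark. Under this identification the elements $a, b_1, \ldots, b_n$ correspond to continuous real-valued functions on $X,$ and since they are positive and invertible in $C,$ they correspond to strictly positive continuous functions.

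First, I would rewrite the hypotheses in this pointwise form: $a(x) > 0,$ $b_i(x) > 0$ for every $x \in X$ and every $i,$ and $a(x) \le b_i(x)$ for all $i = 1, 2, \ldots, n$ and all $x \in X.$ Next, because all values are nonnegative real numbers, I multiply these scalar inequalities: for each $x \in X,$
\[
a(x)^n \;=\; \prod_{i=1}^n a(x) \;\le\; \prod_{i=1}^n b_i(x) \;=\; (b_1 b_2 \cdots b_n)(x).
\]
Taking the (strictly monotone) $n$-th root on $[0,\infty)$ yields $a(x) \le (b_1(x) b_2(x) \cdots b_n(x))^{1/n}$ for every $x \in X.$ Finally, since the $n$-th root is a continuous function on $[0,\infty),$ continuous functional calculus in $C$ (equivalently, evaluation of the function $t \mapsto t^{1/n}$ in $C(X)$) identifies $(b_1 b_2 \cdots b_n)^{1/n}$ with the pointwise $n$-th root, so the pointwise inequality translates back to the operator inequality $a \le (b_1 b_2 \cdots b_n)^{1/n}$ in $A.$

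There is no real obstacle here; the only point that deserves emphasis is that commutativity of the \SCA\ is essential, since in a noncommutative setting the inequalities $0 \le a \le b_i$ do not multiply (indeed $a \le b$ and $c \le d$ need not imply $ac \le bd$ even for positive operators). The hypothesis that all elements lie in a common commutative \SCA\ is exactly what allows the reduction to $C(X)$ and hence the pointwise multiplication above.
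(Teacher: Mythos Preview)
Your proof is correct. The reduction via Gelfand--Naimark to pointwise inequalities in $C(X)$ is entirely valid, and the only subtlety---that positivity and functional calculus in the commutative \SCA\ agree with those in $A$---is standard.

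The paper takes a different, more algebraic route: it argues directly in the \CA\ by an induction on $n,$ using conjugation by square roots. From $a \le b_i$ it obtains $a^2 = a^{1/2} a\, a^{1/2} \le a^{1/2} b_i\, a^{1/2} = a b_i \le b_1 b_i,$ and iterates to get $a^n \le b_1 b_2 \cdots b_n,$ then applies the $n$-th root. Your Gelfand argument is cleaner and makes the role of commutativity transparent in one stroke; the paper's argument avoids invoking the Gelfand representation explicitly but still leans on commutativity at each step (to rewrite $a^{1/2} b_i a^{1/2}$ as $a b_i$ and to multiply inequalities), so the two approaches are morally equivalent. Neither gains real generality over the other here, since the hypothesis already places everything in a commutative \SCA.
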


\begin{proof}
Since $b_i\ge a,$ $i=1,2,...,n,$ 
one has,  for $1\le i\le n,$  
\beq
b_i^{1/2}ab_i^{1/2}\le  b_i^{1/2}b_1 b_i^{1/2}=b_1b_i.
\eneq
Hence
\beq
a^{2}=a^{1/2} aa^{1/2}\le a^{1/2} b_i a^{1/2}=b_i^{1/2}a b_i^{1/2}\le b_1b_i.
\eneq
By induction, we obtain that
\beq
a^{n}\le b_1b_2\cdots b_n.
\eneq
Hence 
\beq
a\le (b_1b_2\cdots b_n)^{1/n}
\eneq
as desired.
\end{proof}

\begin{lem}\label{Lsyth1}
Let $n\in \N.$  
Suppose that 
$h_{1,k}, h_{2,k},...,h_{n,k}$ are
 $n$-tuples of densely defined self-adjoint operators on a separable 
infinite dimensional Hilbert space $H$
such that
\beq \label{eq:seqcon}
\lim_{k\to\infty} \|h_{i, k}h_{j,k}-h_{j,k}h_{i,k}\|=0,\,\,\, 1\le i, j\le n.
\eneq
Define $L_k: V_{00}\to B(H)$ by
\beq
L_k(f)=f_1(h_{1,k})f_2(h_{2,k})\cdots f_n(h_{n,k})
\eneq
if $f((t_1,t_2,...,t_n))=f_1(t_1)f_2(t_2)\cdots f_n(t_n)$ and $f_j\in C_0(\R),$ $1\le j\le n,$ $k\in \N.$ 

{\rm (1)} Then there exists a \hm\,
$\Lambda: C_0(\R^n)\to l^\infty(B(H))/c_0(B(H))$
such that
\beq
\Lambda(f)=\Pi(\{L_k(f)\})\rforal f\in V_{00},
\eneq
where $\Pi: l^\infty(B(H))\to l^\infty(B(H))/c_0(B(H))$ is the quotient map;

{\rm (2)} Let $\Psi: C_0(S^n\setminus \{\zeta^{np}\})\to l^\infty(B(H))/c_0(B(H))$ be generated by the sequences  (see Definition~\ref{Dpsi})
\[
(\{(\sum_{j=1}^n h_{j,k}^2-1)(\sum_{j=1}^n h_{j,k}^2+1)^{-1}\}, \{\bar{h}_{1, k}\}, \{\bar{h}_{2, k}\},..., \{\bar{h}_{n, k}\}).
\]
Then 
\beq
\Psi(f)=\Lambda(f\circ \Phi_{S^n})\rforal f\in  C_0(S^n \setminus \{\zeta^{np}\}).
\eneq

\end{lem}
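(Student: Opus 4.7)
For part (1) I will build $\Lambda$ from the universal property of the tensor product. First define, for each $1\le i\le n$, the $*$-homomorphism $\phi_i\colon C_0(\R)\to Q:=l^\infty(B(H))/c_0(B(H))$ by $\phi_i(f):=\Pi(\{f(h_{i,k})\}_k)$; this is well defined because $\|f(h_{i,k})\|\le\|f\|_\infty$ by the spectral theorem and each $f\mapsto f(h_{i,k})$ is a $*$-homomorphism. Next apply Lemma~\ref{Lfrom3-2} to each pair $(f,g)\in C_0(\R)^2$, together with the hypothesis $\|[h_{i,k},h_{j,k}]\|\to 0$, to conclude $\lim_k\|[f(h_{i,k}),g(h_{j,k})]\|=0$, so $\phi_i(f)$ and $\phi_j(g)$ commute in $Q$ for $i\ne j$. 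Definition~\ref{Dphin} then produces the desired $\Lambda\colon C_0(\R^n)\to Q$, whose value on an elementary tensor $f=f_1\otimes\cdots\otimes f_n\in V_{00}$ is $\phi_1(f_1)\cdots\phi_n(f_n)=\Pi(\{L_k(f)\})$ by construction, giving part (1).

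For part (2) set $\tilde\Lambda:=\Lambda\circ\Phi_{S^n}^{\ast}$; since $\Phi_{S^n}\colon\R^n\to S^n\setminus\{\zeta^{np}\}$ is a proper homeomorphism, $\tilde\Lambda$ is a $*$-homomorphism. To prove $\tilde\Lambda=\Psi$ I will pass to the unital extensions $C(S^n)=C_0(S^n\setminus\{\zeta^{np}\})^{+}\to Q$ and verify agreement on the generators $\{r,x_1,\ldots,x_n\}$ of Remark~\ref{RemSn}. Writing $g_0:=r\circ\Phi_{S^n}$ and $g_j:=x_j\circ\Phi_{S^n}$ as in Definition~\ref{DR2toS2} and comparing with Definition~\ref{Dpsi}, the problem reduces to the two identities
\[
\Lambda\bigl((1+|\zeta|^2)^{-1}\bigr)=\Pi(\{(1+d_k)^{-1}\}),\qquad \Lambda\bigl(t_j(1+|\zeta|^2)^{-1}\bigr)=\Pi(\{h_{j,k}(1+d_k)^{-1}\})
\]
in $Q$; the right-hand side of the second equals $\Pi(\{\bar h_{j,k}\})$ modulo $c_0$ by Lemma~\ref{Lappcom-1n}.

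\textbf{Integral representation.} I will attack both identities via the Laplace formulas
\[
(1+|\zeta|^2)^{-1}=\int_0^\infty e^{-s}\prod_{i=1}^n e^{-st_i^2}\,ds,\qquad t_j(1+|\zeta|^2)^{-1}=\int_0^\infty e^{-s}\bigl(t_je^{-st_j^2}\bigr)\prod_{i\ne j}e^{-st_i^2}\,ds,
\]
both converging in sup-norm on $\R^n$ because $t\mapsto e^{-st^2}$ and $t\mapsto te^{-st^2}$ lie in $C_0(\R)$ with uniform bounds. Since every integrand is an elementary tensor in $V_{00}$, part~(1) together with norm-convergent Riemann integration push $\Lambda$ through the integral, leaving $\Pi(\{\int_0^\infty e^{-s}\prod_ie^{-sh_{i,k}^2}\,ds\})$ and the analogous expression with an extra $h_{j,k}$. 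Spectral calculus on $d_k$ gives $(1+d_k)^{-1}=\int_0^\infty e^{-s}e^{-sd_k}\,ds$, so the identities reduce to the Trotter-type assertion $\|\prod_ie^{-sh_{i,k}^2}-e^{-sd_k}\|\to 0$ as $k\to\infty$, with sufficient uniformity in $s$ for dominated convergence.

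\textbf{Main obstacle.} The Trotter estimate is the crux and is where the unboundedness bites: as the paper's technical remark emphasizes, $\|[h_{i,k}^2,h_{j,k}^2]\|$ need not tend to zero even when $\|[h_{i,k},h_{j,k}]\|$ does, so a direct Baker--Campbell--Hausdorff expansion is unavailable. My plan is to route through the bounded transforms $H_{i,k}(t):=h_{i,k}(1+th_{i,k}^2)^{-1}$ of Lemma~\ref{Lfrom3}, which for each fixed $t>0$ are uniformly bounded self-adjoint elements whose mutual commutators vanish as $k\to\infty$; for these, the standard bounded BCH argument gives norm-convergence of the Trotter product. Lemma~\ref{LHlambda} should then let me recover the semigroups $e^{-sh_{i,k}^2}$ from functional calculus of the $H_{i,k}(t)$ on any prescribed spectral interval, and a careful $t\to 0$ limit with uniform control in $s$ should close the identity. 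This is expected to demand the same delicate unbounded-operator bookkeeping already deployed in Lemmas~\ref{Linqx}, \ref{Lappcom-1n}, and \ref{Lfrom3-2}.
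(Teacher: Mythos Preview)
Your proof of part~(1) is correct and coincides with the paper's: both build $\phi_i\colon C_0(\R)\to Q$ via spectral calculus, invoke Lemma~\ref{Lfrom3-2} to get commutativity in the quotient, and apply Definition~\ref{Dphin}. Your reduction of part~(2) to the two identities
\[
\Lambda\bigl((1+|\zeta|^2)^{-1}\bigr)=\Pi(\{(1+d_k)^{-1}\}),\qquad
\Lambda\bigl(t_j(1+|\zeta|^2)^{-1}\bigr)=\Pi(\{h_{j,k}(1+d_k)^{-1}\})
\]
is also what the paper does (these are its Claims~(i) and~(ii)), and your Laplace representation pushing $\Lambda$ through the integral is legitimate.

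The gap is the Trotter step. You need $\Pi(\{\prod_i e^{-sh_{i,k}^2}\})=\Pi(\{e^{-sd_k}\})$ for each $s>0$, and your proposed route through the bounded transforms $H_{i,k}(t)=h_{i,k}(1+th_{i,k}^2)^{-1}$ does not reach $e^{-sd_k}$. A BCH argument for the bounded $H_{i,k}(t)$ would at best compare $\prod_i e^{-sH_{i,k}(t)^2}$ with $e^{-s\sum_i H_{i,k}(t)^2}$, but $\sum_i H_{i,k}(t)^2$ is not a function of $d_k$ (it is not $d_k(1+td_k)^{-2}$ or anything of the sort, since the resolvents $(1+th_{i,k}^2)^{-1}$ are taken coordinatewise), and Lemma~\ref{LHlambda} only converts functions of $h_{i,k}$ individually, never touching $d_k$. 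In fact the Trotter identity in $Q$ is essentially Claim~(i) in disguise: once Claim~(i) holds, $\Pi(\{e^{-sd_k}\})$ is a continuous function of $\Lambda((1+|\zeta|^2)^{-1})$ and the identity follows, so you have not reduced the difficulty.

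The paper avoids Trotter entirely by an algebraic cancellation trick. Set $\Delta=\prod_i (1+t_i^2)^{-2}\in V_{00}$, a strictly positive element of $C_0(\R^n)$. The point is that $(1+|\zeta|^2)\Delta\in V_0$, so $\Lambda$ is computable on it directly:
\[
\Lambda\bigl((1+|\zeta|^2)\Delta\bigr)=\Pi\bigl(\{(1+d_k)\textstyle\prod_i(1+h_{i,k}^2)^{-2}\}\bigr).
\]
Multiplying the two candidates for Claim~(i) by this element gives the same thing, namely $\Lambda(\Delta)$, so
\[
\Lambda\bigl((1+|\zeta|^2)\Delta\bigr)\cdot\Bigl(\Lambda((1+|\zeta|^2)^{-1})-\Pi(\{(1+d_k)^{-1}\})\Bigr)=0.
\]
Since $(1+|\zeta|^2)\Delta$ is strictly positive, $\Lambda((1+|\zeta|^2)\Delta)$ is a strictly positive element of the hereditary subalgebra $A=\overline{\Lambda(C_0(\R^n))\,Q\,\Lambda(C_0(\R^n))}$, hence has zero annihilator in $A$. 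It remains only to check that $\Pi(\{(1+d_k)^{-1}\})\in A$; this follows from the elementary inequality $(1+d_k)^{-2}\le (1+h_{j,k}^2)^{-2}$ for each $j$, which via Lemma~\ref{Linequality} (applied in the commutative image in $Q$) gives $\Pi(\{(1+d_k)^{-2}\})\le\Lambda(\Delta)^{1/n}$. Claim~(ii) is then a one-line consequence using $t_j\Delta\in V_0$. This argument never computes $e^{-sd_k}$ and never needs any semigroup comparison.
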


\begin{proof}
(1) For $1\le i\le n$, consider the homomorphism \[
\phi_i: C_0(\R)\rightarrow l^{\infty}(B(H))/c_0(B(H)) \quad f\mapsto\Pi(\{f(h_{i,k})\}_k)
\]
By \eqref{eq:seqcon} and Lemma~\ref{Lfrom3-2}, one has \[
\phi_i(f)\phi_j(g)=\phi_j(g)\phi_i(f)
\]
for all $1\le i,j\le n$ and $f,g\in C_0(\R)$.
Thus, by Definition~\ref{Dphin}, we obtain a \hm\, 
$\Lambda: C_0(\R^n)\to l^\infty(B(H))/c_0(B(H))$  such that
%
\beq
\Lambda(f)&=&\Pi(\{L_k(f)\})\rforal f\in V_{00},
\,\,\, {\rm {consequently,}}\\
 \Lambda(f)&=&\Pi(\{L_k(f)\}){\rforal f\in V_{0}.}
\eneq

{(2) By the definition, $\Psi$ sends a general element $(r, 2s_1, \ldots, 2s_n)\mapsto f_0(r)f_1(2s_1)\cdots f_n(2s_n)$ in $C_0(S^n\backslash\{\zeta^{np}\})$ to 
\[
\Pi\left(f_0((1-\sum_{i=1}^n h_{i,k}^2)(1+\sum_{i=1}^n h_{i,k}^2)^{-1})f_1(2\bar h_{1,k})\cdots f_n(2\bar h_{n,k})\right).
\]
Recall that the map $\Phi_{S^n}^{\#}: C_0(S^n\backslash\{\zeta^{np}\})\rightarrow C_0(\R^n)$  induced by the homeomorphism  $\Phi_{S^n}: \R^n\rightarrow S^n\backslash\{\zeta^{np}\}$
is an isomorphism. 
Consider the  homomorphism 
\[
\Psi\circ(\Phi_{S^n}^{\#})^{-1}: C_0(\R^n)\to l^\infty(B(H))/c_0(B(H)).
\]
By the definition, we obtain
\beq\label{Lsyn1-3}
\Psi\circ(\Phi_{S^n}^{\#})^{-1}(f)=\Pi(\{L_k^S(f)\})\rforal f\in V_0^S,
\eneq 
where 
\beq
L_k^S(f_0f_1\cdots f_n)=f_0(r_k) f_1(2{\bar h}_{1,k})\cdots f_n(2{\bar h}_{n,k}),
\eneq
$f_j\in C_0(\R),$ $0\le j\le n,$ and $r_k=(1-\sum_{i=1}^n h_{i,k}^2)(1+\sum_{i=1}^n h_{i,k}^2)^{-1},$
$k\in \N.$

(i) Claim: $\Lambda({1\over{1+\sum_{i=1}^n t_i^{2}}})=\Pi((\{1+\sum_{i=1}^n h_{i,k}^2)^{-1}\})$\,\,\,\,
($(t_1, t_2,...,t_n)\in \R^n$).

There are $g_k\in V_0$ such that
\beq\label{Lsyth1-10}
\lim_{k\to\infty}\sup_{_{(t_1,t_2,...,t_n)\in \R^n}} \left|g_k((t_1, t_2,...,t_n))-{1\over{1+\sum_{i=1}^n t_i^2}}\right|=0.
\eneq
Let $\Delta_i(t)={1\over{(1+t_i^2)^2}}$ and $\Delta=\prod_{i=1}^n \Delta_i.$
Note that $\Delta, \Delta^{1/2}\in V_{00}$ and they are strictly positive in $C_0(\R^n).$
Moreover (for $t=(t_1, t_2,...,t_n)$)
\beq
t_i^2\Delta(t)={t_i^2\over{(1+t_i^2)^2}}\prod_{j\not=i}^n \Delta_j \in V_{00}.
\eneq
Hence
\beq
(1+\sum_{i=1}^n t_i^2)\Delta(t)\in V_0.
\eneq
Therefore 
\beq\label{Lsyn1-13}
\Lambda((1+\sum_{i=1}^n t_i^2)\Delta(t))=\Pi(\{(1+\sum_{i=1}^n h_{i,k}^{2})\Delta_1(h_{1,k})\Delta_2(h_{2,k})\cdots \Delta_{n}(h_{n,k})\}).
\eneq
By \eqref{Lsyth1-10}
\beq\label{Lsyth1-15}
\lim_{k\to\infty}\|\Lambda((1+\sum_{i=1}^n t_i^2)\Delta(t))\Lambda(g_k)-\Lambda({(1+\sum_{i=1}^n t_i^2)\Delta(t)\over{1+\sum_{i=1}^n t_i^2}})\|=0.
\eneq 
However
\beq
\Lambda({(1+\sum_{i=1}^n t_i^2)\Delta(t)\over{1+\sum_{i=1}^n t_i^2}})=\Lambda(\Delta).
\eneq
Combining with \eqref{Lsyn1-13} and \eqref{Lsyth1-15}, we obtain 
that
\beq\nonumber
&&\hspace{-0.2in}\Pi(\{(1+\sum_{i=1}^n h_{i,k}^{2})\Delta_1(h_{1,k})\Delta_2(h_{2,k})\cdots \Delta_n(h_{n,k})\})\left(\Lambda({1\over{1+\sum_{i=1}^n t_i^2}})-\Pi(\{(1+\sum_{i=1}^n h_{i,k}^{ 2})^{-1}\})\right)=0,\\\label{Lsyth1-19}
&&{\rm or},\,  \Lambda(\Delta(t)(1+\sum_{i=1}^n { t}_i^2))\left(\Lambda({1\over{1+\sum_{i=1}^n t_i^2}})-\Pi(\{(1+\sum_{i=1}^n h_{i,k}^2)^{-1}\})\right)=0.
\eneq
Let $A=\overline{\Lambda(C_0(\R))(l^\infty(B(H))/c_0(B(H)))\Lambda(C_0(\R^n))}.$
Then $A$ is a hereditary \SCA\, of $l^\infty(B(H))/c_0(B(H)).$
Note that, for all $j=1,2,...,n,$  
\beq
(1+\sum_{i=1}^n h_{i,k}^2)^{-2}\le (1+h_{j,k}^2)^{-2}=\Delta_j(h_{j,k}).
\eneq
By Lemma \ref{Linequality},
\beq
\Pi(\{(1+\sum_{i=1}^n h_{i,k}^{2})^{-2}\})\le (\Pi(\{\Delta_1(h_{1,k})\Delta_2(h_{2, k})\cdots \Delta_n(h_{n,k})\})^{1/n}.
\eneq
It follows that $\Pi(\{(1+\sum_{i=1}^n h_{i,k}^{2})^{-2})\})\in A,$ whence 
\beq\label{Lsyth-1-20}
\Pi(\{(1+\sum_{i=1}^n h_{i,k}^{2})^{-1})\})
=(\Pi(\{(1+\sum_{i=1}^n h_{i,k}^{2})^{-2})\}))^{1/2}\in A.
\eneq
On the other hand, $\Delta((t_1, t_2,...,t_n))(1+\sum_{i=1}^n t_i^2)$ is a strictly 
positive element in $C_0(\R^n).$ Hence 
$\Lambda(\Delta(t)(1+\sum_{i=1}^nt_i^2))$ is a strictly positive element of 
$A.$   It follows from \eqref{Lsyth1-19} (also using \eqref{Lsyth-1-20}) that 
\beq
\Lambda({1\over{1+\sum_{i=1}^n t_i^2)}})=\Pi(\{(1+\sum_{i=1}^n h_{i,k}^2)^{-1}\}).
\eneq
This proves the claim (i).

(ii) $\Lambda({t_j\over{1+\sum_{i=1}^n t_i^2}})=\Pi(\{(1+\sum_{i=1}^n h_{i,k}^2)^{-1} h_{j,k}\})$
for  all ${j}=1,2,...,n.$

 Note that  $t_j\Delta\in V_0.$ 
 It follows that
 \beq
 \Lambda(t_j\Delta)=\Pi(\{h_{j,k}\Delta_1(h_{1,k})\Delta_2(h_{2, k})\cdots \Delta_n(h_{n,k})\}).
 \eneq
We then compute that (applying Claim (i))
\beq
\left(\Lambda(t_j(1+\sum_{i=1}^n t_i^2)^{-1})-\Pi(\{h_{j, k}(1+\sum_{i=1}^n h_{i,k}^2)^{-1})\}\right)\Lambda(\Delta)=0.
\eneq
Applying \eqref{Lsyth-1-20},  since $\Lambda(\Delta)$ is a strictly positive element of $A,$ we obtain that
\beq
\Lambda(t_j(1+\sum_{i=1}^n t_i^2)^{-1})=\Pi(\{h_{j, k}(1+\sum_{i=1}^n h_{i,k}^2)^{-1}\}).
\eneq
This proves the Claim (ii).

By Claim (i) and (ii) as well as \eqref{Lsyn1-3}, we conclude that
\beq
\Psi\circ(\Phi_{S^n}^{\#})^{-1}(f)=\Lambda(f)\rforal f\in V_0^S.
\eneq
Since $V_0^S$ is dense in $C_0(\R^n),$ the lemma then follows.}
\end{proof}

\begin{lem} \label{LsS-2}
Let  $n\in \N,$ $M\ge 1$ and  $0<\eta<1$ be such that
$F_M(\eta)<1,$ and $0<\dt<1$ such that 
$G_M(\dt)<1.$

 Fix $D_M^\eta, D_M^\dt \subset {{\{\zeta\in\R^n: \|\zeta\|_2\le M\} }}$  as in 
 the end of Definition~\ref{Dsynsp2}. 
 
Then, there exists $\alpha>0$ such that for any $n$-tuple of densely defined self-adjoint operators $h_1, h_2,...,h_n$ on a Hilbert space satisfying 
$\| h_ih_j-h_jh_i\|<\alpha,$ $i,j\in\{1,2,...,n\},$ the following inclusions hold
\beq
&&S{\rm Sp}^\eta_M((h_1, h_2,...,h_n))\subset s{\rm Sp}^{\sigma}_M((h_1, h_2,...,h_n))\andeqn\\
&&s{\rm Sp}^\dt_M ((h_1, h_2,...,h_n){ )}\subset S{\rm Sp}_M^{\gamma}((h_1, h_2,...,h_n)),
\eneq
where $\sigma=\max\{\eta, F_M(\eta)+\sigma_1\},$ for any  $0<\sigma_1\le \min\{\eta/4, F_M(\eta)/4\},$ and
 $\gamma=\max\{\dt, G_M(\dt)+\dt_1\},$ for any $0<\dt_1\le \min\{\dt/4, G_M(\dt)/4\}.$
\end{lem}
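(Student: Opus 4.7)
The plan is a compactness/contradiction argument that lifts the problem to the quotient $C^*$-algebra $\ell^\infty(B(H))/c_0(B(H))$, where Lemma~\ref{Lsyth1} supplies a genuine commutative functional calculus via two homomorphisms related by stereographic projection, and then compares the two brands of cutoffs using the moduli of continuity $F_M$ and $G_M$. Suppose the first inclusion fails for every $\alpha_m = 1/m$: there exists a sequence of $n$-tuples $(h_{1,m},\ldots,h_{n,m})$ with $\|[h_{i,m},h_{j,m}]\|<1/m$ and $\lambda_m \in S{\rm Sp}^\eta_M((h_{1,m},\ldots,h_{n,m})) \setminus s{\rm Sp}^\sigma_M((h_{1,m},\ldots,h_{n,m}))$. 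Since $D_M^\eta$ is finite (being $\eta$-dense in a compact region), we may pass to a subsequence and fix a single witness $\xi_j \in D_M^\eta$ with $\|\Phi_{S^n}(\lambda_m)-\Phi_{S^n}(\xi_j)\|_2 \leq \eta$ for every $m$ (hence $\|\lambda_m-\xi_j\|_2 \leq F_M(\eta)$) and $\|\Theta^S_{\Phi_{S^n}(\xi_j),\eta}(h_{1,m},\ldots,h_{n,m})\| \geq 1-\eta$ for every $m$.

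By Lemma~\ref{Lsyth1} the sequences generate homomorphisms $\Lambda: C_0(\R^n) \to \ell^\infty(B(H))/c_0(B(H))$ and $\Psi: C_0(S^n\setminus\{\zeta^{np}\}) \to \ell^\infty(B(H))/c_0(B(H))$ with $\Psi(f)=\Lambda(f\circ\Phi_{S^n})$. Because the quotient norm equals $\limsup_m\|\cdot\|$, the uniform lower bound above yields $\|\Lambda(\Theta^S_{\Phi_{S^n}(\xi_j),\eta}\circ\Phi_{S^n})\| \geq 1-\eta$. The crux is then a geometric interpolation: construct a positive bump $\phi\in V_{00}$ with $\Theta^S_{\Phi_{S^n}(\xi_j),\eta}\circ\Phi_{S^n} \leq \phi \leq \Theta_{\xi_j,\sigma}$ pointwise on $\R^n$. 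The support of $\Theta^S_{\Phi_{S^n}(\xi_j),\eta}$ sits in a coordinate-wise $\eta$-neighborhood of $\Phi_{S^n}(\xi_j)$ in $S^n$, so by the defining property of $F_M$ its preimage inside $\{\|\zeta\|_2\leq M\}$ lies in an $F_M(\eta)$-neighborhood of $\xi_j$; the cushion $\sigma_1$ and the choice $\sigma=\max\{\eta,F_M(\eta)+\sigma_1\}$ place this preimage strictly inside $\{\Theta_{\xi_j,\sigma}=1\}$, leaving room to build $\phi$ as a product of one-variable bumps. Positivity of $\Lambda$ then forces $\|\Lambda(\Theta_{\xi_j,\sigma})\| \geq \|\Lambda(\phi)\| \geq 1-\eta \geq 1-\sigma$, so for infinitely many $m$ one has $\|\Theta_{\xi_j,\sigma}(h_{1,m},\ldots,h_{n,m})\| \geq 1-\sigma$; combined with $\|\lambda_m-\xi_j\|_2\leq \sigma$, this puts $\lambda_m$ into $s{\rm Sp}^\sigma_M$ (choosing the $\sigma$-dense set used in the definition to contain $D_M^\eta$), a contradiction.

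The second inclusion is established by the symmetric argument: starting from a uniform bound $\|\Theta_{\xi_j,\delta}(h_{1,m},\ldots,h_{n,m})\|\geq 1-\delta$ obtained on an analogous subsequence, one applies $\Lambda$, uses the continuity estimate $\|\Phi_{S^n}(\xi)-\Phi_{S^n}(\zeta)\|_2\leq G_M(\|\xi-\zeta\|_2)$ to dominate $\Theta_{\xi_j,\delta}$ (pushed onto $S^n$ via $\Psi=\Lambda\circ\Phi_{S^n}^{\#}$) by a spherical bump living inside the coordinate-wise $(G_M(\delta)+\delta_1)$-neighborhood of $\Phi_{S^n}(\xi_j)$, and concludes by the analogue of the previous argument with $\gamma=\max\{\delta,G_M(\delta)+\delta_1\}$. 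The main obstacle in both directions is precisely this geometric interpolation between cutoffs on $\R^n$ and on $S^n$: for non-commuting operators no standard functional-calculus identity ties $\Theta_{\xi_j,\sigma}(h_1,\ldots,h_n)$ to $\Theta^S_{\Phi_{S^n}(\xi_j),\eta}(h_1,\ldots,h_n)$, and it is the passage to the commutative quotient via Lemma~\ref{Lsyth1} that reduces every such identity to a bona fide inequality among continuous scalar functions, to which the bounds supplied by $F_M$ and $G_M$ apply directly.
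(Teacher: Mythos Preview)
Your proposal is correct and follows essentially the same approach as the paper: a contradiction argument that passes to a subsequence indexed by a fixed $\xi_j\in D_M^\eta$ (using finiteness of $D_M^\eta$), then invokes Lemma~\ref{Lsyth1} to identify $\Psi$ with $\Lambda\circ\Phi_{S^n}^{\#}$ in the commutative quotient, compares the two cutoff functions pointwise via an intermediate bump (the paper calls it $f_{\xi_i}$) using the moduli $F_M$ and $G_M$, and reads off the contradiction from $\|\Pi(\{a_m\})\|=\limsup_m\|a_m\|$. The only cosmetic differences are your explicit mention of choosing $D_M^\sigma\supseteq D_M^\eta$ and your phrasing of the interpolating bump as a product in $V_{00}$, both of which the paper handles implicitly.
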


\begin{proof}
Fix $n, M$ and let $\eta, \delta$ be small enough to satisfy $F_M(\eta)<1$ and $G_M(\delta)<1.$
By way of contradiction, for any $\alpha_m=\frac1{m}$, there exist self-adjoint operators $h_{1,m}, \ldots, h_{n,m}$ such that $\|h_{i,m}h_{j,m}-h_{j,m}h_{i,m}\|<\frac1{m}$ such that 
\beq
&&S{\rm Sp}^\eta_M((h_{1,m}, h_{2,m},...,h_{n,m}))\not\subset s{\rm Sp}^{\sigma}_M((h_{1,m}, h_{2,m},...,h_{n,m})) \text{ or }  \label{eq:SSpsSp} \\
&&s{\rm Sp}^\dt_M((h_{1,m}, h_{2,m},...,h_{n,m}))\not\subset S{\rm Sp}_M^{\gamma}((h_{1,m}, h_{2,m},...,h_{n,m})), \label{eq:sSpSSp}
\eneq
for all $\sigma, \gamma>0.$ 
We now show none of them holds. 

Following the notation of Lemma \ref{Lsyth1},  associated with the sequences $\{h_{j,m}\},$ $ 1\le j\le n,$ choose 
$\Psi: C_0(S^n\setminus \zeta^{np})\to l^\infty(B(H))/c_0(B(H))$ 
as defined in Lemma \ref{Lsyth1}.  Also we retain  the notation 
$\Lambda$ and $L_k$ as in the statement as well as in the proof of Lemma \ref{Lsyth1} associated 
with the same sequences $\{h_{j,m}\}_{m\in \N},$ $ 1\le j\le n.$ 

Let us first look at (\ref{eq:SSpsSp}).
In what follows, for $x\in S^n\setminus \zeta^{np}$ and $\rho>0,$
denote 
$D(x, \rho)=\{y\in S^n: \|x-y\|_2<\rho\}.$

 Fix $D^{\eta}_M\subset \{\zeta\in\mathbb R^n : \|\zeta\|_2\le M \}$, a finite $\eta$-dense set. 
For every $m\in\mathbb N,$ there exists $\xi_m\in D^{\eta}_M$ such that 
\[
\overline{D(\Phi_{S^n}(\xi_m), \eta)}\subset \Phi_{S^n}(S\mathrm{Sp}_M^{\eta}((h_{1,m}, \ldots, h_{n,m})))\quad \text{but} \quad \overline{B(\xi_m, \sigma)}\not\subset s\mathrm{Sp}_M^{\sigma}((h_{1,m}, \ldots h_{n,m}))
\]
for any $\sigma=\max\{\eta, F_M(\eta)+\sigma_1\}\}$ with   $0<\sigma_1\le \min\{\eta/4, F_M(\eta)/4\}.$
Since $D^{\eta}_M$ is a finite set, there is an infinite subsequence $\{k_m\}\subset\mathbb N$ such that $\xi_{k_l}=\xi_{k_m}$ for all $l,m$. Let us denote this vector by $\xi.$
Hence 
\[
\overline{D(\Phi_{S^n}(\xi), \eta)}\subset \Phi_{S^n}(S\mathrm{Sp}_M^{\eta}((h_{1,k_m}, \ldots, h_{n,k_m}))) \rforal  m\in\mathbb N
\] 
and by definition, this implies 
\[
\|\Theta^{ S}_{\Phi_{S^n}(\xi), \eta}(h_{1,k_m}, \ldots, h_{n,k_m})\|\ge 1-\eta \rforal m\in\mathbb N.
\]
Consider $\Theta_{\Phi_{S^n}(\xi),\eta}\in V{{_{00}^S}}$ and then
\begin{align*}
\|\Psi(\Theta_{\Phi_{S^n}(\xi),\eta}^S)\|=&\|\Pi(\{\Theta_{\Phi_{S^n}(\xi), \eta}^S(h_{1,k_m}, \ldots, h_{n,k_m})\}_{ m})\| \\
=& \limsup_m\|\Theta_{\Phi_{S^n}(\xi), \eta}^S(h_{1,k_m}, \ldots, h_{n,k_m})\|>1-\eta.
\end{align*}

Denoting $\Phi_{S^n}^{\#}: C_0(S^n\backslash{\zeta^{np}})\rightarrow C_0(\mathbb R^n)$ induced by $\Phi_{S^n}: \mathbb R^n\rightarrow S^n\backslash \{\zeta^{np}\}$. 
Then, by Lemma~\ref{Lsyth1}, $\Lambda\circ\Phi_{S^n}^{\#}=\Psi,$ and applying to $\Theta_{\Phi_{S^n}(\xi),\eta}^S$, one has 
\[
\Lambda(\Theta_{\Phi_{S^n}(\xi),\eta}^S\circ\Phi_{S^n})=\Psi(\Theta_{\Phi_{S^n}(\xi),\eta}^S).
\]
Put $0<\sigma_1\le \min\{\eta/4, F_M(\eta)/4\}.$ 
Define $f_{\xi_i}\in C(\{\zeta\in \R^n: \|\zeta\|_2\le M\})$ by $0\le f_{\xi_i}\le 1;$ 
$f_{\xi_i}(\zeta)=1$ if $\zeta-\xi_i\in F_M(\eta)^n;$ and 
$f_{\xi_i}(\zeta)=0$ if $\zeta-\xi_i\not\in  (F_M(\eta)+\sigma_1)^n.$
Since 
\beq
\Phi_{S^n}^{-1}(D(\Phi_{S^n}(\xi_i), \eta))\subset B(\xi_i, F_M(\eta)),
\eneq
we have 
\beq
f_{\xi_i}\ge \Theta^S_{\Phi_{S^n}(\xi_i), \eta}\circ \Phi_{S^n}.
\eneq
Choose $\sigma_2=\max \{\eta,  F_M(\eta)+\sigma_1\}.$ Then 
\beq
\Theta_{\xi_i, \sigma_2}\ge f_{\xi_i}.
\eneq
It follows that $$\|\Lambda(\Theta_{\xi,\sigma_2})\|\ge \|\Lambda(\Theta_{\Phi_{S^n}(\xi),\eta}^S\circ\Phi_{S^n})\|=\|\Psi(\Theta_{\Phi_{S^n}(\xi),\eta}^S)\|>1-\eta\ge 1-\sigma_2.$$
By definition, $\Lambda$ is identical to $\Pi\circ \{L_k\}$ when restricted to $V_{00}$, we have,
\[\|\Lambda(\Theta_{\xi,\sigma_2})\|
=\|\Pi(\{\Theta_{\xi, \sigma_2}(h_{1,k_m}, h_{2,k_m},...,h_{n, k_m})\})\|=\limsup_m \|\Theta_{\xi, \sigma_2}(h_{1,k_m}, h_{2, k_m},...,h_{n, k_m})\|.
\]
Then there is an infinite subsequence $\{l_m\}\subset\{k_m\}$ such that 
\[
\|\Theta_{\xi, \sigma_2}(h_{1,k_m}, h_{2, k_m},...,h_{n, k_m})\|>1-\sigma_2 {{\rforal}} m\in\mathbb N,
\]
which means that 
\[
\overline{B(\xi,\sigma_2)}\subset s\mathrm{Sp}_M^{\sigma_2}((h_{1,l_m}, \ldots, h_{n, l_m})) {{\rforal}} m\in\mathbb N.
\]
This is a contradiction and thus (\ref{eq:SSpsSp}) does not hold.

The case for (\ref{eq:sSpSSp}) is similar. To derive a contradiction to arbitrary $\delta>0$, one just 
needs to find $\gamma$ depending on $\delta$ as follows. 
Put $0<\dt_1\le \min\{G_M(\dt)/4, \dt/4\}.$
Let $g_{\xi_j}\in C({{S_M}})$ be  such that
$0\le g_{\xi_j}\le 1;$ 
$g_{\xi_j}(\xi)=1,$ if $\xi-\xi_j\in G_M(\dt)^{n+1}\subset \R^{n+1};$
and $g_{\xi_j}(\xi)=0,$ if $\xi-\xi_j\not\in (G_M(\dt)+\dt_1)^{n+1}.$ 
Since 
\beq\label{LsS-1-30}
\Phi_{S^n}(B(\xi_j, \dt))\subset D(\Phi_{S^n}(\xi_j), G_M(\dt)),
\eneq
\beq
g_{\xi_j}\ge \Theta_{\xi_j, \dt}\circ \Phi_{S^n}^{-1}.
\eneq 
Choose $\gamma=\max \{\dt, G_M(\dt)+\dt_1\}.$ Then
\beq
\Theta_{\Phi_{S^n}(\xi_j), \gamma}^S\ge g_{\xi_j}.
\eneq
The rest of the proof follows similarly as the first part of the proof. 
\end{proof}

Theorem \ref{TAMU} follows from the following more refined result.

\begin{thm}\label{thm:6main}
Let $n\in \N,$ $\ep>0$ and $M>1.$  There exist $\eta>0$ (depending on 
$\ep$ and $M$) and $\dt(n,\eta, M, \ep)>0$ satisfying the following:

Let $\{h_1,h_2,...,h_{ n}\}$ be an ${ n}$-tuple of densely defined self-adjoint operators 
on an infinite dimensional separable Hilbert space $H$ such that
\beq\label{TAMU-1.4}
\|h_ih_j-h_{j}h_{i}\|<\dt,\,\,\, i,j=1,2,...,{ n} \quad and \,\,\, {\rm sp}(d^{1/2})\cap [0, M]\not=\emptyset,
\eneq
where $d=\sum_{j=1}^{ n} h_j^2.$
Then, 

{{\rm (i)}} $s{\rm Sp}_M^{\eta}((h_1, h_2,...,h_{ n}))\not=\emptyset,$

{{\rm (ii)}} for any $\lambda=(\lambda_1, \lambda_2,...,\lambda_{ n})\in s{\rm Sp}_M^{\eta}((h_1, h_2,...,h_{ n})),$
there is a unit vector $v\in H$ such that
\beq
\max_{1\le i\le { n}}\|(h_i-\lambda_i)(v)\|<\ep.
\eneq
\end{thm}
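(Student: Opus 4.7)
The plan is to deduce Theorem~\ref{thm:6main} from Theorem~\ref{TAMU''}, which gives the analogous AMU conclusion but for points in the \emph{spherical} synthetic spectrum $S\mathrm{Sp}_M^{\eta'}((h_1,\ldots,h_n))$. The translation between the two notions of synthetic spectrum is furnished by Lemma~\ref{LsS-2}: when the commutators are small enough, one has
\[
s\mathrm{Sp}_M^{\eta}((h_1,\ldots,h_n)) \subset S\mathrm{Sp}_M^{\gamma}((h_1,\ldots,h_n)), \qquad S\mathrm{Sp}_M^{\eta''}((h_1,\ldots,h_n)) \subset s\mathrm{Sp}_M^{\sigma}((h_1,\ldots,h_n)),
\]
where $\gamma=\max\{\eta,G_M(\eta)+\eta_1\}$ and $\sigma=\max\{\eta'',F_M(\eta'')+\sigma_1\}$, with the moduli $F_M,G_M$ of Definition~\ref{Mdelta} continuous and vanishing at $0$. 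Thus by choosing the parameters small we can transport estimates freely between the two pictures without inflating them beyond the controlled amounts needed.

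Concretely, given $\varepsilon>0$ and $M>1$, first apply Theorem~\ref{TAMU''} to obtain $\eta_0\in(0,\varepsilon/4)$ and $\delta_0>0$ so that, under the hypotheses of \eqref{TAMU-1.4} with $\delta_0$, every point $\mu\in S\mathrm{Sp}_M^{\eta_0}((h_1,\ldots,h_n))$ admits a unit vector $v$ satisfying $\max_i\|(h_i-\mu_i)v\|<\varepsilon$. Using the continuity of $G_M$ at $0$, pick $\eta\in(0,\eta_0)$ and $\eta_1\in(0,\min\{\eta/4,G_M(\eta)/4\})$ with $G_M(\eta)+\eta_1<\eta_0$, so that the first inclusion of Lemma~\ref{LsS-2} upgrades to $s\mathrm{Sp}_M^{\eta}\subset S\mathrm{Sp}_M^{\eta_0}$. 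Then given $\lambda\in s\mathrm{Sp}_M^{\eta}((h_1,\ldots,h_n))$ we have $\lambda\in S\mathrm{Sp}_M^{\eta_0}((h_1,\ldots,h_n))$, and the AMU conclusion (ii) follows at once from Theorem~\ref{TAMU''}.

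For the nonemptiness assertion (i), by continuity of $F_M$ choose $\eta''>0$ and $\sigma_1\in(0,\min\{\eta''/4,F_M(\eta'')/4\})$ with $\max\{\eta'',F_M(\eta'')+\sigma_1\}\le\eta$. Apply Corollary~\ref{Pextsp} with parameter $\eta''$ and the given $M$ to obtain $\delta_2>0$ such that, when $\|h_ih_j-h_jh_i\|<\delta_2$ and $\mathrm{sp}(d^{1/2})\cap[0,M]\neq\emptyset$, one has $S\mathrm{Sp}_M^{\eta''}((h_1,\ldots,h_n))\neq\emptyset$; then the second inclusion of Lemma~\ref{LsS-2} forces $s\mathrm{Sp}_M^{\eta}((h_1,\ldots,h_n))\neq\emptyset$. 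Taking $\alpha>0$ to be the commutator threshold ensuring that both inclusions of Lemma~\ref{LsS-2} hold and setting $\delta:=\min\{\delta_0,\delta_2,\alpha\}$ completes the assembly.

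The combinatorics of matching the constants $\eta,\eta_0,\eta'',\sigma_1,\eta_1,\delta_0,\delta_2,\alpha$ is bookkeeping, and the genuine difficulty is already absorbed into Lemma~\ref{LsS-2} and Theorem~\ref{TAMU''}. In particular, the delicate point emphasized at the beginning of this section---that $\|[h_i,h_j]\|$ being small does not imply $\|[h_i,h_j^2]\|$ is small or even finite---enters through the nontrivial analysis underlying Lemma~\ref{Lsyth1}, which is what made Lemma~\ref{LsS-2} available in the first place. Once those results are in hand, the present theorem is essentially a quantitative compatibility statement between the Euclidean and spherical pictures of synthetic spectra.
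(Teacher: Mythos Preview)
Your proposal is correct and follows essentially the same approach as the paper: both proofs use Theorem~\ref{TAMU''} for the AMU conclusion on the spherical spectrum, Corollary~\ref{Pextsp} for nonemptiness, and the two inclusions of Lemma~\ref{LsS-2} (together with the monotonicity of $s{\rm Sp}_M^{\eta}$ and $S{\rm Sp}_M^{\eta}$ in $\eta$) to pass between the Euclidean and spherical synthetic spectra, with the parameter choices governed by the continuity of $F_M$ and $G_M$ at~$0$. The only differences are cosmetic relabelings of the constants.
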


\begin{proof}
Fix $\ep>0$ and $M>1.$  We may assume that $\ep<1/2.$ 
Choose $0<\eta_1<\ep/4$ (as $\eta$) and $\dt_1>0$ (as $\dt$) given by Theorem \ref{TAMU''}
for $\ep/2$ (as $\ep$) and $M$ (with $m$ being replaced by $n$).

Fix a finite generating subset ${\cal F}_g$  of $C(S^{n})$ as in Corollary \ref{Pextsp}.


Choose $0<\eta_2<\eta_1/4$ such that $G_M(\eta_2)<\eta_1/8.$
Put $0<{{\bt}}<\eta_2$ and $\gamma=\max\{\bt, G_M(\eta_2)+d_1\},$
where $d_1=\min\{\eta_2/4, G_M(\eta_2)/4\}.$ It follows that $\gamma<\eta_1/4.$

Choose $\eta_3<\eta_2/4$ such that
$F_M(\eta_3)<\eta_2/8.$
Put $\sigma=\max \{\eta_3, F_M(\eta_3)+\sigma_1\},$ where 
$\sigma_1=\min\{\eta_3/4, F_M(\eta_3)/4\}.$  Note that $0<\sigma <\eta_2/4.$

Choose $\dt_2>0$ (as $\dt$) given by Corollary \ref{Pextsp} associated with $\eta_3$ and $M$ above. 

Choose $\af_1>0$ (as $\af$)  given by Lemma \ref{LsS-2} for $n$, $M,$  $\eta_3$ (as $\eta$) 
and $\eta_2$ (as $\dt$).  Let $\af_2$ be (as $\af$)  required by Lemma \ref{Ladd251112-2}
for $\sigma$ (as $\eta$) $\eta_2$ (as $\dt$),  $n$ and  $M.$

Now let $\dt=\min\{\af_1, \af_2, \dt_2, \dt_1\}>0,$  $\eta=\eta_2$ and 
$\{h_1, h_2,...,h_{n}\}$ be an ${n}$-tuple of 
densely defined self-adjoint operators on $H$ such that \eqref{TAMU-1.4} holds.
By applying Corollary \ref{Pextsp}, we obtain 
that
\beq\label{6main-5}
S{\rm Sp}_M^{\eta_3}((h_1, h_2,...,h_{ n}))\not=\emptyset.
\eneq
By Lemma \ref{LsS-2},
\beq
&&S{\rm Sp}^{\eta_3}_M((h_1, h_2,...,h_{ n}))\subset s{\rm Sp}^{\sigma}_M((h_1, h_2,...,h_{ n}))\andeqn\\\label{eqgamma}
&&s{\rm Sp}^{\eta_2}_M((h_1, h_2,...,h_{ n}))\subset S{\rm Sp}_M^{\gamma}((h_1, h_2,...,h_{ n})),
\eneq
Since $\sigma<\eta_2/4,$ by applying Lemma \ref{Ladd251112-2}, $s{\rm Sp}_M^{\sigma}(h_1, h_2,...,h_{ n})\subset s{\rm Sp}^{\eta_2}_M(h_1, h_2,...,h_{ n}).$

As $\eta=\eta_2,$ by \eqref{6main-5}, 
  (i) holds. 

Since $\gamma<\eta_1/4,$  
$S{\rm Sp}_M^{\gamma}((h_1, h_2,...,h_n))\subset S{\rm Sp}_M^{\eta_1}((h_1, h_2,...,h_n))$ (see Lemma~\ref{Ladd251112-2}).
 By the choice of $\eta_1$ and $\dt_1,$ 
applying  Theorem \ref{TAMU''},  for any $\lambda=(\lambda_1, \lambda_2,...,\lambda_n)\in {{S{\rm Sp}_M^{\eta}}}((h_1, h_2,...,h_n)),$
there is a unit vector $v\in H$ such that
\beq
\max_{1\le i\le n}\|(h_i-\lambda_i)(v)\|<\ep.
\eneq
Theorem then follows from \eqref{eqgamma} as $\eta=\eta_2.$
\end{proof}

\begin{rem}\label{Rems6}

(1) There is an additional condition ``${\rm sp}(d^{1/2})\cap [0, M]\not=\emptyset$" in Theorem \ref{thm:6main}.
One notes that ${\rm sp}(d^{1/2})\not=\emptyset.$ Hence there is always an 
$M>1$ such that ${\rm sp}(d^{1/2})\cap [0, M]\not=\emptyset.$  This might lead  one to think that 
the condition is redundant.   However,  we are dealing with 
unbounded 
operators,  $M$ could be very large.
  It is crucial that the parameter $\delta$ depends on this value of $M$, which is in fact expected. 
  On the other hand, 
  for examples in Section 7, this condition  always holds. 
%
%

(2) In general, it is difficult to compute the spectrum of an operator. Moreover the spectra of operators are unstable under small perturbation.  However, the synthetic spectrum such as 
$s{\rm Sp}^\eta_M((T_1, T_2,...,T_n))$ could be tested and is stable with small perturbation. 
 In order to have $\|\Theta_{\lambda,\eta}\|\ge 1-\eta,$
it suffices to have one unit vector $x\in H$  such that
\beq\label{Test}
\la \Theta_{\lambda, \eta}(T_1,T_2,...,T_n)x, x\ra>1-\eta.
\eneq 
This is much more convenient  than $S{\rm Sp}^\eta_M((T_1,T_2,...,T_n)).$ In fact, in the case of the
position-momentum system (see Section~\ref{pme}), a student group is explicitly calculating $s{\rm Sp}^\eta_M((S_1, S_{\hbar}))$ numerically.
\end{rem}

\section{Momentum and position operators}

\begin{NN}\label{pme}\textbf{Position-momentum example.}

{\rm  Let us recall some known facts which will be used here.

Let $H = L^2(\mathbb{R})$ be the Hilbert space of square-integrable functions on $\mathbb{R}$ with respect to the Lebesgue measure. 
Let $D_0$ be the dense subspace of infinitely differentiable functions $f\in L^2(\R)$, where each derivative $f^{(m)}\in L^2(\R),$ and 
\beq\label{eq:Sobolev}
t^m f(t), t^m f'(t)\in L^2(\R)\cap C_0(\R)\rforal m\in \N.
\eneq
Since each derivative $f^{(m)}$ belongs to $L^2(\mathbb{R})$, it follows that $f^{(m-1)}$ is absolutely continuous and vanishes at infinity.
Since the Schwartz space \(\mathcal{S}(\mathbb{R})\) is contained in \(D_0\) and \(\mathcal{S}(\mathbb{R})\) is dense in \(L^2(\mathbb{R})\), it follows that \(D_0\) is dense in \(L^2(\mathbb{R})\).


Over the dense domain $D_0$, consider the \emph{position operator}
\begin{equation}\label{eq:position}
S_1: L^2(\R) \;\longrightarrow\; L^2(\mathbb{R}), 
\qquad
S_1 f(t) \;=\; t\,f(t),
\end{equation}
and the \emph{momentum operator} (with Planck constant \(\hbar\neq0\))
\begin{equation}\label{eq:momentum}
S_{\hbar}: D_0 \;\longrightarrow\; L^2(\mathbb{R}), 
\qquad
S_{\hbar}f(t) \;=\; -\,i\,\hbar\,f'(t).
\end{equation}
It is immediate that $S_1$ is symmetric.  By integrating by parts on $D_0$, one also checks that $S_{\hbar}$ is symmetric.  Moreover, standard results (e.g., \cite{Chernoff}) imply that on the line \(\mathbb{R}\) both \(S_1\) and \(S_{\hbar}\) admit a unique self-adjoint extension (its closure) on its maximal domain.

The operator $S_{\hbar}$ does not become small when $|\hbar|\to 0.$
Fix $\hbar$ with $0<|\hbar|\le 1.$ 
Choose for example  $g(t)=\pi^{-1/4} e^{-t^2/2}.$ Then $\|g\|_2=1$ and 
$
\|g'\|_2={1\over{\sqrt{2}}}.
$
For each $n\in \N,$ define 
\beq
a_n(\hbar)={|\hbar| \over{n\sqrt{2}}}\andeqn f_n(t)={1\over{\sqrt{a_n(\hbar)}}} g(t/a_n(\hbar)).
\eneq
Then 
$\|f_n\|_2=\|g\|_2=1$ for all $n,\, \hbar.$ Moreover,
\beq
\|f_n'\|_2={\|g'\|_2\over{a_n(\hbar)}}={\|g'\|_2\over{|\hbar |\|g'\|_2 /n}}={n\over{|\hbar|}}.
\eneq
We compute that
\beq
\|S_{\hbar}(f_n)\|_2=|\hbar|\|f_n'\|_2=|\hbar | \cdot {n\over{|\hbar|}}=n.
\eneq
The last identity is independent of $|\hbar|.$ 
%
This implies that $S_{\hbar}$ remains ``large" as $|\hbar|\to 0.$
On the other hand, for every $f\in D_0,$ 
\beq
S_{\hbar}\circ S_1(f)-S_1\circ S_{\hbar}(f)&=&S_{\hbar }(tf(t))+i \hbar S_1(f'(t))\\
&=&-i\hbar (f(t)+tf'(t))+i\hbar tf'(t)=-i\hbar f(t).
\eneq
Hence,
\beq
\|S_1S_{\hbar}-S_{\hbar}S_1\|=|\hbar|
\eneq
which goes to zero as $|\hbar|\to 0.$
In other words, $S_1$ and $S_{\hbar}$ become ``almost commuting"
as $\hbar\to0$.

Let us also mention that 
\beq \label{S1Sh2}
[S_1,\, S_{\hbar}^2]=2i\hbar S_{\hbar}
\eneq
is an unbounded operator.
}
\end{NN}

The following result establishes the existence of Approximately Macroscopic Unique  (AMU) States  in quantum systems governed by position and momentum operators, under the condition that the parameter
$|\hbar|$ is sufficiently small. This yields an affirmative resolution to Mumford's question for such systems.
It should be noted,
 if we only consider the existence of AMU, then we may remove $M$ in the statement 
 of Theorem \ref{TMP} by choosing $M=1+1/2,$ for example.


\begin{thm}\label{TMP}
For any $\ep>0$ and $M>1,$ there is $0<\eta$ and $\dt>0$ such that, 
when $|\hbar|<\dt,$

{\rm (i)} $s{{\rm Sp}}_M^{\eta}(S_1, S_{\hbar})\not=\emptyset,$

{\rm (ii)}
 for any $\lambda=(\lambda_1, \lambda_2)\in s{\rm Sp}_M^{\eta}(S_1, S_{\hbar}),$
there exists a unit vector $v\in H$ such that
\beq
\|S_1(v)-\lambda_1 v\|_2<\ep\andeqn \|S_{\hbar} (v)-\lambda_2 v\|_2<\ep.
\eneq
\end{thm}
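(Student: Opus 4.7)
The plan is to derive Theorem \ref{TMP} as a direct specialization of Theorem \ref{thm:6main} to the pair $(h_1, h_2) = (S_1, S_\hbar)$. There are two hypotheses of Theorem \ref{thm:6main} that need to be checked for this pair: the smallness of the commutator $\|h_1 h_2 - h_2 h_1\|$, and the non-emptiness of $\mathrm{sp}(d^{1/2}) \cap [0, M]$, where $d = S_1^2 + S_\hbar^2$.

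First I would check the densely defined self-adjoint tuple structure of Definition~\ref{Dntuple}. The Schwartz space $\mathcal{S}(\R) \subset D_0$ (see \ref{pme}) is contained in the common domain of $S_1$ and $S_\hbar$, is invariant under each of them and hence under $S_1^2 + S_\hbar^2$, and is dense in $L^2(\R)$. Moreover, the canonical commutation relation computed in \ref{pme} gives $[S_1, S_\hbar] = -i\hbar I$ on $\mathcal{S}(\R)$, so $\|S_1 S_\hbar - S_\hbar S_1\| = |\hbar|$, which tends to zero as $\hbar \to 0$. This handles the commutator hypothesis by choosing $|\hbar|$ sufficiently small.

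The only genuinely new task is the spectral check. I would fix a unit vector $\varphi \in C_c^\infty(\R)$ with $\|\varphi\|_2 = 1$ and consider the scaled family $\varphi_\lambda(t) = \lambda^{1/2}\varphi(\lambda t)$, which lies in $\mathcal{S}(\R)$ with $\|\varphi_\lambda\|_2 = 1$. A direct computation gives
\begin{align*}
\|S_1 \varphi_\lambda\|_2^2 &= \lambda^{-2}\|S_1 \varphi\|_2^2, \\
\|S_\hbar \varphi_\lambda\|_2^2 &= \hbar^2 \lambda^2 \|\varphi'\|_2^2.
\end{align*}
Setting $\lambda = |\hbar|^{-1/2}$ yields $\|d^{1/2}\varphi_\lambda\|_2^2 = |\hbar|(\|S_1\varphi\|_2^2 + \|\varphi'\|_2^2)$, which is less than $M^2$ for all sufficiently small $|\hbar|$. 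By the spectral theorem, if $\mathrm{sp}(d^{1/2}) \cap [0, M] = \emptyset$ then $d^{1/2} \geq M I$ and hence $\|d^{1/2}\varphi_\lambda\|_2 \geq M$, a contradiction. Therefore the spectral hypothesis holds.

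Given $\epsilon > 0$ and $M > 1$, I would then invoke Theorem \ref{thm:6main} to produce $\eta > 0$ and $\delta_0(\eta, M, \epsilon) > 0$, and set $\delta = \min\{\delta_0, \delta_1\}$, where $\delta_1$ is small enough that both $|\hbar| < \delta_1$ implies $\|d^{1/2}\varphi_\lambda\|_2 < M$. For any $|\hbar| < \delta$, both hypotheses of Theorem \ref{thm:6main} are verified for $(S_1, S_\hbar)$, and the two conclusions of that theorem specialize exactly to (i) and (ii). No serious obstacle arises: the substantive content has been absorbed into Theorem \ref{thm:6main}, and what remains is the scaling verification of the spectral condition. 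The only delicate point is being careful that the unboundedness of $S_\hbar$ does not interfere with the estimate, but the scaling $\lambda \sim |\hbar|^{-1/2}$ is precisely calibrated to balance the position and momentum contributions, consistent with the harmonic-oscillator intuition that $\mathrm{sp}(d) \ni |\hbar|$ to leading order.
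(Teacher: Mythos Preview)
Your proposal is correct and follows the same overall strategy as the paper: verify the two hypotheses of Theorem~\ref{thm:6main} for the pair $(S_1,S_\hbar)$ and invoke that theorem. The only difference lies in how the spectral condition $\mathrm{sp}(d^{1/2})\cap[0,M]\neq\emptyset$ is checked. The paper quotes the exact harmonic-oscillator spectrum $\mathrm{sp}(d)=\{(2n+1)|\hbar|:n\ge 0\}$, so that $|\hbar|\le 1/16$ already places $\sqrt{|\hbar|}$ in $[0,1]\subset[0,M]$. Your scaling argument with $\varphi_\lambda(t)=\lambda^{1/2}\varphi(\lambda t)$ and $\lambda=|\hbar|^{-1/2}$ instead produces a unit vector with $\|d^{1/2}\varphi_\lambda\|_2^2=|\hbar|\bigl(\|S_1\varphi\|_2^2+\|\varphi'\|_2^2\bigr)$, which forces the same conclusion via the spectral theorem without appealing to the explicit eigenvalue list. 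Your route is a bit more self-contained; the paper's is shorter given the standard reference. Either way the remainder is identical.
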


\begin{proof}
Set $d = S_1^2 + S_{\hbar}^2$.  A direct calculation (or, equivalently, by using the standard spectrum of the harmonic oscillator Hamiltonian $H=\frac{d}{2}$; see (2.3.8) in Section 2.3 of \cite{MQM}) shows that the eigenvalues of $d$ are 
\[
\{\, (2n+1)\,\lvert \hbar\rvert \;:\; n=0,1,2,\ldots\}.
\]
Thus, whenever $\lvert\hbar\rvert \le \tfrac{1}{16}$, the interval $[0,1]$ intersects with the spectrum of $d^{1/2}$.  In particular, for any $M>1$,
${\rm sp}(d^{1/2})\cap [0, M]\not=\emptyset.$

Let $\ep>0$ and $M>1$ be given. Choose $\eta>0$ and $\dt_1:=\dt(\ep, M)>0$ as in Theorem \ref{thm:6main}
for $\ep$ and $M$ as mentioned.  Choose $\dt=\min\{\dt_1, 1/16\}.$ 
Then whenever $\lvert\hbar\rvert<\delta$, one has
\beq
\|S_1S_{\hbar}-S_{\hbar}S_1\|=|\hbar|<\dt. 
\eneq
It then follows from Theorem~\ref{thm:6main}
 that (i)  holds. Moreover, also by Theorem~\ref{thm:6main}, for any $(\lambda_1,\lambda_2)\in s{\rm Sp}_M^\eta((S_1, S_{\hbar})),$
 there is $v\in L^2(\R)$ with $\|v\|=1$ such that
\beq
\|S_1(v)-\lambda_1 v\|_2<\ep\andeqn \|S_{\hbar} (v)-\lambda_2 v\|_2<\ep.
\eneq

\end{proof}

\begin{NN} 
{\rm Consider the unbounded operator $T_{\hbar}=S_1+i S_{\hbar}$ with domain $D_0$. Then 
\beq
T_{\hbar}(f)(t)=tf(t)+\hbar f'(t)\rforal f\in D_0\andeqn t\in \R.
\eneq
One also has that
\beq
T_{\hbar}^*(f)(t)=tf(t)-\hbar f'(t)\rforal f\in D_0\andeqn t\in \R.
\eneq
%
%
%

%
Now, define 
\[
L_{\hbar}:= \bigl(1 + T_{\hbar}\,T_{\hbar}^*\bigr)^{-1/2}\,\;T_{\hbar}.
\]
%
Then $L_{\hbar}$ is bounded
 (\emph{c.f.} Theorem 25.3 of \cite{Shubin}),
 and $(1+T_{\hbar}T_{\hbar}^*)^{-1/2}:L^2(\mathbb{R})\to D_0$ is a bounded  operator.  Since $D_1$ is dense in $L^2(\mathbb{R})$, $L_{\hbar}$ extends to a bounded operator \(L_{\hbar}:L^2(\mathbb{R})\to L^2(\mathbb{R})\). 
} 
\end{NN}


\begin{lem}\label{Lnorm}
Denote by $\pi$ the quotient map from $B(L^2(\R))$ to the Calkin algebra 
$B(L^2(\R))/{\cal K}.$ Then
\beq
\pi(L_\hbar)\pi(L_\hbar^*)=1.
\eneq
Moreover, $L_\hbar$ has Fredholm index $1$ if $\hbar>0$ and $-1$ if $\hbar <0.$ 
\end{lem}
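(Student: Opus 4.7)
\medskip
\noindent\textbf{Proof plan for Lemma \ref{Lnorm}.}
The plan is to reduce everything to the quantum harmonic oscillator $d=S_1^2+S_\hbar^2$ by computing $T_\hbar T_\hbar^{*}$ and $T_\hbar^{*}T_\hbar$ explicitly, and then to invoke the well-known fact that $d$ has compact resolvent. Using $[S_1,S_\hbar]=i\hbar$ (which follows from the calculation already done in the excerpt for $[S_\hbar,S_1]=-i\hbar$) one finds, on the common invariant core $D_0$ of Schwartz-type functions,
\begin{equation*}
T_\hbar T_\hbar^{*}=(S_1+iS_\hbar)(S_1-iS_\hbar)=d+\hbar,\qquad T_\hbar^{*}T_\hbar=(S_1-iS_\hbar)(S_1+iS_\hbar)=d-\hbar .
\end{equation*}
The spectrum of $d$ is $\{(2n+1)|\hbar|:n\in\mathbb Z_{\ge 0}\}$ with simple eigenvalues (the Hermite/harmonic-oscillator basis $\{\psi_n\}$), so $(1+d\pm\hbar)^{-1}\in\mathcal K$.

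First I would prove that $\pi(L_\hbar)\pi(L_\hbar^{*})=1$. By functional calculus applied to the self-adjoint operator $T_\hbar T_\hbar^{*}=d+\hbar$ one has, directly from the definition of $L_\hbar$,
\begin{equation*}
L_\hbar L_\hbar^{*}=(1+T_\hbar T_\hbar^{*})^{-1/2}\,T_\hbar T_\hbar^{*}\,(1+T_\hbar T_\hbar^{*})^{-1/2}
=\mathbf 1-(1+d+\hbar)^{-1}.
\end{equation*}
Since $(1+d+\hbar)^{-1}$ is compact, $\mathbf 1-L_\hbar L_\hbar^{*}\in\mathcal K$, whence $\pi(L_\hbar)\pi(L_\hbar^{*})=\pi(L_\hbar L_\hbar^{*})=1$ in $B(L^2(\mathbb R))/\mathcal K$.

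Next I would identify $L_\hbar^{*}L_\hbar$ and then the Fredholm index. The key identity is $T_\hbar^{*}f(T_\hbar T_\hbar^{*})=f(T_\hbar^{*}T_\hbar)T_\hbar^{*}$, valid for bounded Borel $f$ by the polar decomposition of the closed densely defined operator $T_\hbar$. Applying this with $f(x)=(1+x)^{-1/2}$ yields
\begin{equation*}
L_\hbar^{*}L_\hbar=T_\hbar^{*}(1+T_\hbar T_\hbar^{*})^{-1}T_\hbar=\mathbf 1-(1+d-\hbar)^{-1},
\end{equation*}
again a compact perturbation of the identity. Therefore $L_\hbar$ is Fredholm, and
\begin{equation*}
\ker L_\hbar=\ker L_\hbar^{*}L_\hbar=\ker\!\bigl(\mathbf 1-(1+d-\hbar)^{-1}\bigr)=\{v: dv=\hbar v\},
\end{equation*}
\begin{equation*}
\ker L_\hbar^{*}=\ker L_\hbar L_\hbar^{*}=\{v: dv=-\hbar v\}.
\end{equation*}
Matching $\pm\hbar$ against the eigenvalues $(2n+1)|\hbar|$ of $d$, the value $+\hbar$ is attained (only for $n=0$, one-dimensional) precisely when $\hbar>0$, and $-\hbar$ is attained precisely when $\hbar<0$. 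Consequently
\begin{equation*}
\mathrm{ind}\,L_\hbar=\dim\ker L_\hbar-\dim\ker L_\hbar^{*}=\begin{cases}\,1-0=1,&\hbar>0,\\[2pt]\,0-1=-1,&\hbar<0.\end{cases}
\end{equation*}

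The main obstacle I anticipate is rigorously justifying the manipulations with the unbounded operator $T_\hbar$: the identity $(1+T_\hbar T_\hbar^{*})^{-1/2}T_\hbar T_\hbar^{*}(1+T_\hbar T_\hbar^{*})^{-1/2}=\mathbf 1-(1+T_\hbar T_\hbar^{*})^{-1}$ (which needs the spectral theorem for the self-adjoint operator $T_\hbar T_\hbar^{*}$, whose self-adjointness on an appropriate domain rests on $T_\hbar$ being closed) and the intertwining $T_\hbar^{*}(1+T_\hbar T_\hbar^{*})^{-1}T_\hbar=\mathbf 1-(1+T_\hbar^{*}T_\hbar)^{-1}$. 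Both can be derived from the polar decomposition $T_\hbar=V|T_\hbar|$ with $|T_\hbar|^{2}=T_\hbar^{*}T_\hbar=d-\hbar$ and the fact that $V$ is a partial isometry with $V^{*}V$ the projection onto $\overline{\operatorname{ran}|T_\hbar|}$ and $VV^{*}$ the projection onto $\overline{\operatorname{ran}T_\hbar}$; alternatively, everything can be verified on the Hermite basis, where $T_\hbar$ acts as a weighted shift and the claimed identities and kernel computations become a direct combinatorial check.
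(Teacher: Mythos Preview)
Your argument is correct and follows essentially the same route as the paper: both reduce to the harmonic oscillator $d=S_1^2+S_\hbar^2$, use its discrete spectrum $\{(2n+1)|\hbar|\}$ to show $(1+T_\hbar T_\hbar^*)^{-1}$ is compact, and conclude $1-L_\hbar L_\hbar^*=(1+T_\hbar T_\hbar^*)^{-1}\in\mathcal K$, whence $\pi(L_\hbar)\pi(L_\hbar^*)=1$.

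The only genuine difference is in the index computation. The paper identifies $\ker T_\hbar$ and $\ker T_\hbar^*$ by solving the first-order ODEs $tf+\hbar f'=0$ and $tf-\hbar f'=0$ explicitly, then argues $\mathrm{Ind}(L_\hbar)=\mathrm{Ind}(T_\hbar)$ because $(1+T_\hbar T_\hbar^*)^{-1/2}$ is injective with dense range. You instead compute $L_\hbar^*L_\hbar=1-(1+d-\hbar)^{-1}$ via the intertwining identity $T_\hbar^*(1+T_\hbar T_\hbar^*)^{-1}T_\hbar=(1+T_\hbar^*T_\hbar)^{-1}T_\hbar^*T_\hbar$, and read off $\ker L_\hbar$ and $\ker L_\hbar^*$ as eigenspaces of $d$ for the eigenvalues $\pm\hbar$. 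Your route is slightly more uniform (everything is expressed through $d$) and avoids the separate step of transferring the index from $T_\hbar$ to $L_\hbar$; the paper's route is more hands-on and makes the Gaussian kernel vector $e^{-t^2/2\hbar}$ explicit. Both are equally valid, and your anticipated domain issues are exactly the ones the paper handles by citing the self-adjointness of $-f''+t^2f$ and the Hermite basis.
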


\begin{proof}

First observe that $T_{\hbar}T_{\hbar}^*$ and $T_{\hbar}^*T_{\hbar}$ (initially defined on \(D_0\)) extend to self-adjoint operators whose domain is 
\[
\bigl\{\,f\in L^2(\mathbb{R}): -\,f'' + t^2 f \in L^2(\mathbb{R})\bigr\},
\]
see \cite[Theorem 26.2]{Shubin}. 
Let $\xi_0$ be the unit vector so that $T_{\hbar}\xi_0=0$ ($\xi_0$ is a positive scalar multiple of $e^{-x^2/2}$). 
Then, by setting $\xi_n=(T_{\hbar}^*)^n\xi_0$, one has, as in Theorem 11.2 in \cite{Hall},
\[
T_{\hbar}^*T_{\hbar}\xi_n=2\hbar n\xi_n,
\]
where $\|\xi_n\|=(2\hbar)^n.$ 
In fact, as claimed in the theorem, one has $a^*a\xi_n=n\xi_n$, 
where $a$ is defined in (11.4) of \cite{Hall} and satisfies $\sqrt{2\hbar}a=T_{\hbar}$ by setting $m\omega=1$ in $a$.
Then by Theorem 11.4 in \cite{Hall}, $\{(2\hbar)^{-n}\xi_n :n\ge 0\}$ form an orthonormal function basis for $L^2(\R).$ 
Hence, the set of eigenvalues $\{2\hbar n : n=0,1,\ldots\}$ forms the full spectrum for $T_{\hbar}^*T_{\hbar}.$ 
Observe that $T_{\hbar}T_{\hbar}^*=T_{\hbar}^*T_{\hbar}+2\hbar$. 
Thus, the spectrum of $T_{\hbar}T_{\hbar}^*$ is the set of eigenvalues \[
\{2\hbar (n+1)~:~ n=0,1,\ldots\}
\]
each with finite multiplicity. 

Consequently, by the spectral theory, we write 
\beq
T_\hbar T^*_\hbar =\int_{{\rm sp}(T_\hbar T_\hbar^*)}\lambda d E_\lambda,
\eneq
where $\{E_\lambda\}$ is the associated spectral measure. 
Then 
\beq
(1+T_\hbar T_\hbar ^*)^{-1}&=&\int_{{\rm sp}(T_\hbar T_\hbar^*)}(1+\lambda)^{-1} d E_\lambda
\andeqn \\
(1+T_\hbar T_\hbar ^*)^{-1/2}&=&\int_{{\rm sp}(T_\hbar T_\hbar^*)}(1+\lambda )^{-1/2}d E_\lambda.
\eneq
In particular, $(1+T_\hbar T_\hbar^*)^{-1}$ is a bounded self-adjoint operator 
whose spectrum 
has only $0$ as a limit point, which is not an eigenvalue (so ${\rm ker} (1+T_\hbar T_\hbar^*)^{-1}$ is zero)
and every non-zero point in the spectrum 
is an eigenvalue with finite dimensional eigenspace. It follows $(1+T_\hbar T_\hbar^*)^{-1}$ 
is a compact operator. Hence, $(1+T_\hbar T_\hbar^*)^{-1/2}$ is also compact.
Moreover $(1+T_\hbar T_\hbar^*)^{-1/2}$ is injective and has dense range.

Now one computes
\[
1 \;-\; L_{\hbar}\,L_{\hbar}^*
= 1 \;-\; \bigl(1 + T_{\hbar}T_{\hbar}^*\bigr)^{-1/2} \,T_{\hbar} \,T_{\hbar}^* \,\bigl(1 + T_{\hbar}T_{\hbar}^*\bigr)^{-1/2} 
= \bigl(1 + T_{\hbar}T_{\hbar}^*\bigr)^{-1},
\]
which is compact.  Consequently, in the Calkin algebra,
\[
\pi\bigl(L_{\hbar}\bigr)\,\pi\bigl(L_{\hbar}^*\bigr) \;=\; 1,
\]
{{where $\pi: B(L^2(\R))\rightarrow B(L^2(\R))/\mathcal{K}(L^2(\R))$ is the quotient map.}}

Next, we calculate the index. 
For
\[
T_{\hbar}: D_0\;\longrightarrow\;L^2(\mathbb{R}),
\]
when $\hbar>0$, one solves $T_{\hbar}f = 0$, i.e.
\[
t f(t) \;+\;\hbar\,f'(t) = 0
\;\Longrightarrow\; \frac{f'(t)}{f(t)} = -\frac{t}{\hbar},
\]
so $f(t) = C\,e^{-t^2/2\hbar}$.  Since $e^{-t^2/2\hbar}\in L^2(\mathbb{R})$, the kernel $\ker T_{\hbar}$ is one-dimensional, spanned by $e^{-t^2/2\hbar}$.  On the other hand, the adjoint equation $T_{\hbar}^*f = 0$ becomes
\[
t\,f(t) \;-\;\hbar\,f'(t) = 0,
\]
whose only $L^2$-solution is the trivial one.  Hence \(\mathrm{coker}\,T_{\hbar}=\ker T_{\hbar}^*=0\).  Consequently, for $\hbar>0$, 
\[
\mathrm{Ind}(T_{\hbar}) \;=\; \dim\ker T_{\hbar} \;-\;\dim\mathrm{coker}\,T_{\hbar} \;=\; 1.
\]
Similarly, when $\hbar<0$, one finds $\ker T_{\hbar}=0$ and $\mathrm{coker}\,T_{\hbar}$ is one-dimensional (generated by $e^{t^2/2\hbar}$ on the adjoint equation), so $\mathrm{Ind}(T_{\hbar})=-1$.

Finally, since $L_{\hbar}$ is $T_{\hbar}$ composed with an injective operator 
with dense range and thus $\mathrm{Ind}(L_{\hbar}) = \mathrm{Ind}(T_{\hbar})$, it follows that $\mathrm{Ind}(L_{\hbar}) = 1$ for $\hbar>0$ and $-1$ for $\hbar<0$.
\end{proof}

The following is a foklore.

\begin{lem}\label{Lclose}
Let $T$ and $S$ be selfadjoint operators defined on a Hilbert space $H,$ with 
dense domains $D(T)$ and $D(S),$ 
respectively. Suppose that  
$D=D(T)\cap D(S)$ is dense in $H.$
Then $T+iS$ with domain $D$ is closable.
\end{lem}

\begin{proof}
Put $A=T+iS.$ 
Let $y\in D(T)\cap D(S)$ and $x\in D,$  we have 
$$\la y,Ax\ra =\la Ty,x\ra - i \la Sy,x\ra =
\la (T- i S)y ,x\ra.$$ 
It follows that $y\in D(A^*)$ and $A^*y=(T-i S)y.$  Hence $D(T)\cap D(S)\subset D(A^*).$
Therefore $A$ is closable.
\end{proof}

\begin{df}\label{Ddifference}
Let $S$ and $T$ be densely defined unbounded 
operators
on an infinite dimensional separable Hilbert space $H$ and $\dt>0.$
In what follows we write 
\beq
\|S-T\|<\dt,
\eneq
if there is a common dense domain $D_0$ such that
\beq
\|(S-T)|_{D_0}\|<\dt.
\eneq
Suppose that $\|S-T\|<\dt.$ 
Then the following hold:
There are increasing sequence of closed subspaces $H_n\subset H_{n+1}$ ($n\in \N$)
such that $S|_{H_n}$ and $T|_{H_n}$ are bounded  for all $n\in \N,$ 
$\bigcup_{n=1}^\infty H_n$ is in $D_0$ and dense in $H,$ and 
\beq
\|(S-T)|_{H_n}\|<\dt\rforal n\in \N.
\eneq
For example, we may choose a dense sequence $\{v_n\}$ in $D_0$ and then define
$H_n={\rm span} \{v_1, v_2,...,v_n\},$ $n\in \N.$ It follows that $S|_{H_n}$ and $T|_{H_n}$ are bounded 
and $\bigcup_{n=1}^\infty H_n$ is dense. 
\end{df}

\begin{lem}\label{Lnorm<2dt}
Let $T, N$ be densely defined closable unbounded operators satisfying $\|T-N\|<\delta$, then \[
\|(1+TT^*)^{-1/2}T-(1+NN^*)^{-1/2}N\|<2\delta.\]
\end{lem}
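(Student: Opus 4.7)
The plan is to pass to the self-adjoint dilation
$$Z_T \;:=\; \begin{pmatrix} 0 & T^* \\ T & 0 \end{pmatrix} \quad\text{on } H \oplus H,$$
and analogously $Z_N$. Since $T$ is closed and densely defined, $Z_T$ is self-adjoint on $D(T)\oplus D(T^*)$, and the hypothesis $\|T-N\| < \delta$ from Definition~\ref{Ddifference}, together with the standard identity $\bigl\|\bigl(\begin{smallmatrix} 0 & A^* \\ A & 0\end{smallmatrix}\bigr)\bigr\| = \|A\|$, gives that $Z_T - Z_N$ extends to a bounded self-adjoint operator on $H\oplus H$ of norm $< \delta$. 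Using $Z_T^2 = \mathrm{diag}(T^*T,\, TT^*)$ together with the intertwining identity $T(1+T^*T)^{-1/2} = (1+TT^*)^{-1/2}T$, the functional calculus identifies
$$f(Z_T) \;:=\; Z_T(1+Z_T^2)^{-1/2} \;=\; \begin{pmatrix} 0 & \tilde T^* \\ \tilde T & 0 \end{pmatrix}, \qquad \tilde T := (1+TT^*)^{-1/2}T,$$
and similarly for $Z_N$. Consequently $\|f(Z_T) - f(Z_N)\| = \|\tilde T - \tilde N\|$, so the problem reduces to controlling $\|f(Z_T) - f(Z_N)\|$ by $\|Z_T - Z_N\|$.

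The core estimate will be the operator-Lipschitz bound $\|f(X) - f(Y)\| \le \|X-Y\|$ for arbitrary self-adjoint (possibly unbounded) operators $X, Y$ with bounded difference, where $f(x) = x/\sqrt{1+x^2}$. I will derive it from the integral representation
$$f(x) \;=\; \frac{2}{\pi}\int_0^\infty \frac{x\,d\lambda}{1+x^2+\lambda^2},$$
rewriting the integrand with $c := \sqrt{1+\lambda^2}$ as $\frac{x}{c^2+x^2} = \frac{1}{2i}\bigl[(c-ix)^{-1} - (c+ix)^{-1}\bigr]$. For self-adjoint $X$ the resolvents $(c \pm iX)^{-1}$ are bounded with norm $\le 1/c$, and the resolvent identity $(c\pm iX)^{-1} - (c\pm iY)^{-1} = \mp i(c\pm iX)^{-1}(X-Y)(c\pm iY)^{-1}$ yields the pointwise bound
$$\bigl\|X(c^2+X^2)^{-1} - Y(c^2+Y^2)^{-1}\bigr\| \;\le\; \frac{\|X-Y\|}{1+\lambda^2}.$$
Because this is integrable and $\frac{2}{\pi}\int_0^\infty (1+\lambda^2)^{-1}\,d\lambda = 1$, integrating gives $\|f(X) - f(Y)\| \le \|X-Y\|$ in operator norm. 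Applied to $X = Z_T, Y = Z_N$, this gives $\|\tilde T - \tilde N\| \le \|T-N\| < \delta < 2\delta$, which is in fact stronger than the stated inequality.

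The main technical obstacle will be to justify the integral representation carefully for the \emph{unbounded} self-adjoint dilations $Z_T, Z_N$. Individually, the integrals $\int_0^\infty Z(1+Z^2+\lambda^2)^{-1}\,d\lambda$ converge only in the strong operator topology, since $\|Z(1+Z^2+\lambda^2)^{-1}\|$ decays only like $(1+\lambda^2)^{-1/2}$; what makes the norm-Lipschitz conclusion legitimate is that the \emph{difference} of integrands decays like $(1+\lambda^2)^{-1}$ — this improvement, arising from the resolvent identity, is exactly why the $\lambda$-integral of the differences converges in operator norm and equals $f(Z_T) - f(Z_N)$. The rest is routine: verifying the self-adjointness of $Z_T$ on the appropriate domain, confirming the block form of $f(Z_T)$ via spectral theorem, and ensuring the common dense domain of $T$ and $N$ is compatible with that of $T^*$ and $N^*$ so the bounded-perturbation interpretation of $\|Z_T - Z_N\|$ makes sense.
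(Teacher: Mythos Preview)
Your approach is correct and follows the same overall strategy as the paper: pass to the self-adjoint dilation $Z_T$ and exploit the integral representation $\frac{x}{\sqrt{1+x^2}}=\frac{2}{\pi}\int_0^\infty \frac{x}{1+x^2+\lambda^2}\,d\lambda$. The difference lies in how the integrand is estimated. The paper expands algebraically,
\[
(1+A^2+\lambda^2)^{-1}A - B(1+B^2+\lambda^2)^{-1}
=(1+A^2+\lambda^2)^{-1}\bigl[(A-B)(1+\lambda^2)+A(B-A)B\bigr](1+B^2+\lambda^2)^{-1},
\]
and bounds the two resulting terms separately, each by $\delta/(1+\lambda^2)$, yielding the constant $2$. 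Your partial-fraction reduction to first-order resolvents $(c\pm iX)^{-1}$ together with the second resolvent identity gives the sharper Lipschitz constant $1$ (so in fact $\|\tilde T-\tilde N\|\le \|T-N\|<\delta$). Your route is also somewhat cleaner on the unboundedness issue: since resolvents are globally bounded and the resolvent identity holds for bounded perturbations of self-adjoint operators, you avoid the paper's device of truncating by projections $p_m$ onto an exhausting sequence of subspaces $H_m$ on which $T|_{H_m}$ and $N|_{H_m}$ are bounded (Definition~\ref{Ddifference}), which is how the paper makes the integral manipulations rigorous. The one point you should make explicit is that the resolvent identity requires $D(Z_T)=D(Z_N)$, i.e.\ that $Z_N$ is a genuine bounded perturbation of the self-adjoint $Z_T$; this follows from Definition~\ref{Ddifference} once $D_0$ is a common core, but deserves a sentence.
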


\begin{proof}
Fix an increasing sequence of closed subspaces $H_n\subset H_{n+1}$ ($n\in \N$) 
such that $\bigcup_{n=1}^\infty H_n$ is dense in $H,$
$T|_{H_n}$ and $N|_{H_n}$ are bounded and 
\beq
\|(T-N)|_{H_n}\|<\dt\rforal n\in \N.
\eneq
Let $A=\begin{bmatrix}
    0 & T^* \\ T & 0
\end{bmatrix}$ and $B=\begin{bmatrix}
    0 & N^* \\ N & 0
\end{bmatrix}.$ Since both are closable,  we may assume that $A$ and $B$ are selfadjoint.
It then is sufficient to show that if $\|A-B\|<\delta$, then 
\[
\|(1+A^2)^{-1/2}A-(1+B^2)^{-1/2}B\|<2\delta.
\]
%
Let $p_m\in B(H)$ be the projection on $H_m,$ $m\in \N.$ 
It follows that $Ap_m$ and $Bp_m$ are bounded operators.
As in the proof of Lemma \ref{Lappcom-1n},
\[
(1+A^2)^{-1/2}Ap_m-(1+B^2)^{-1/2}Bp_m=\frac{2}{\pi}\int_0^{\infty}[(1+A^2+\lambda^2)^{-1}A-B(1+B^2+\lambda^2)^{-1}]p_md\lambda,
\]
where the integrand is
\begin{align*}
&((1+A^2+\lambda^2)^{-1}A-B(1+B^2+\lambda^2)^{-1})p_m \\
=&((1+A^2+\lambda^2)^{-1}(A(1+B^2+\lambda^2)-(1+A^2+\lambda^2)B)(1+B^2+\lambda^2)^{-1})p_m \\
=& ((1+A^2+\lambda^2)^{-1}((A-B)(1+\lambda^2)+A(B-A)B)(1+B^2+\lambda^2)^{-1})p_m. 
\end{align*}
Using 
\beq
\|A-B\|<\delta,~~\|(1+A^2+\lambda^2)^{-1}(1+\lambda^2)\|\le 1
\,\,\,
\|(1+B^2+\lambda^2)^{-1}\|\le\frac{1}{\lambda^2+1},
\eneq
we obtain
\beq
 \|(1+A^2+\lambda^2)^{-1}[(A-B)(1+\lambda^2)](1+B^2+\lambda^2)^{-1}\|\le\frac{\delta}{1+\lambda^2}.
\eneq

Similarly, using
\begin{align*}
&\|B-A\|<\delta,~~ \|(1+A^2+\lambda^2)^{-1/2}A\|\le 1, ~~\|B(1+B^2+\lambda^2)^{-1/2}\|\le 1,\\
&\|(1+A^2+\lambda^2)^{-1/2}\|\le (1+\lambda^2)^{-1/2} \andeqn \|(1+B^2+\lambda^2)^{-1/2}\|\le (1+\lambda^2)^{-1/2},
\end{align*}
we obtain
%
\[
 \|(1+A^2+\lambda^2)^{-1}(A(B-A)B)(1+B^2+\lambda^2)^{-1}\|\le \frac{\delta}{1+\lambda^2}.
\]
Therefore, for each $m\in \N,$
\[
\|((1+A^2)^{-1/2}A-(1+B^2)^{-1/2}B)p_m\|\le \frac{2}{\pi}\int_0^{\infty}\frac{2\delta}{1+\lambda^2} d\lambda=2\delta.
\]
It follows that
\beq
\|(1+A^2)^{-1/2}A-(1+B^2)^{-1/2}B\|<2\dt
\eneq
as claimed.
\end{proof}

The following theorem shows that the quantum mechanical system of the position and momentum 
could {\it not} be possibly approximated by classical mechanical systems no matter how small 
$|\hbar|$ might be.

\begin{thm}\label{Tnotrecover}
Let $S_1$ and $S_{\hbar}$ be the position and momentum operators respectively in (\ref{eq:position})-(\ref{eq:momentum}). Then 
\beq
\liminf_{|\hbar|\to 0}\,\,\inf_{h_i=h_i^*}\{\|(S_1-h_1)|_{D_0}\|+\|(S_{\hbar}-h_2)|_{D_0}\|:  h_1h_2=h_2h_1\}\ge 1/32.
\eneq
Thus, one cannot approximate the pair $(S_1, S_{\hbar})$ arbitrarily well by a commuting pair $(h_1,h_2)$ as \(\hbar\to0\).
\end{thm}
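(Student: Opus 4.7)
The plan is a Fredholm-index obstruction. Suppose, toward a contradiction, that for some arbitrarily small $|\hbar|>0$ there exist a common dense invariant domain $D_0$ and self-adjoint operators $h_1,h_2$ with $h_1h_2=h_2h_1$ (in the strong sense) on $D_0$ such that
\[
\|(S_1-h_1)|_{D_0}\|+\|(S_\hbar-h_2)|_{D_0}\|<\tfrac{1}{32}.
\]
Form $N:=h_1+ih_2$. Strong commutativity of the self-adjoint pair guarantees that $N$ extends to a normal (unbounded) operator, i.e., $N^*N=NN^*$ as self-adjoint operators. Writing $T_\hbar:=S_1+iS_\hbar$, the hypothesis gives $\|T_\hbar-N\|<\tfrac1{32}$ in the sense of Definition~\ref{Ddifference}.

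Pass to bounded transforms $L_\hbar=(1+T_\hbar T_\hbar^*)^{-1/2}T_\hbar$ and $L_N=(1+NN^*)^{-1/2}N$. Lemma~\ref{Lnorm<2dt} then yields $\|L_\hbar-L_N\|<\tfrac{1}{16}$; both operators are contractions. Denote by $\pi\colon B(L^2(\R))\to B(L^2(\R))/\mathcal K$ the Calkin quotient. From Lemma~\ref{Lnorm}, $\pi(L_\hbar)\pi(L_\hbar^*)=1$ and $\mathrm{ind}(L_\hbar)=\pm1$ (with sign determined by that of $\hbar$). Compute
\[
\pi(L_NL_\hbar^*)=\pi(L_\hbar L_\hbar^*)+\pi\bigl((L_N-L_\hbar)L_\hbar^*\bigr)=1+r,
\qquad \|r\|\le\|L_N-L_\hbar\|\,\|L_\hbar^*\|<\tfrac1{16}.
\]
Hence $\pi(L_NL_\hbar^*)$ is invertible in the Calkin algebra, so $L_NL_\hbar^*$ is Fredholm of index $0$. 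Since $L_\hbar^*$ is itself Fredholm with $\mathrm{ind}(L_\hbar^*)=-\mathrm{ind}(L_\hbar)=\mp1$, multiplicativity of the Fredholm index (applied in the form ``if $BA$ and $A$ are Fredholm then so is $B$, with $\mathrm{ind}(B)=\mathrm{ind}(BA)-\mathrm{ind}(A)$'') gives that $L_N$ is Fredholm with $\mathrm{ind}(L_N)=\pm 1\neq 0$.

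On the other hand, normality of $N$ makes $L_N$ itself normal: since $N$ commutes with every bounded Borel function of $N^*N=NN^*$,
\[
L_NL_N^*=NN^*(1+N^*N)^{-1}=N^*N(1+N^*N)^{-1}=L_N^*L_N.
\]
But any normal Fredholm operator satisfies $\ker L_N=\ker L_N^*$ and hence has Fredholm index $0$, contradicting the previous paragraph. Since this holds for every admissible $(h_1,h_2)$, we obtain
\[
\inf_{h_i=h_i^*}\{\|(S_1-h_1)|_{D_0}\|+\|(S_\hbar-h_2)|_{D_0}\|:h_1h_2=h_2h_1\}\ge\tfrac1{32}
\]
for every such $\hbar$, and the liminf bound follows.

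The step I expect to require the most care is unpacking ``$h_1h_2=h_2h_1$'' for unbounded self-adjoint operators so that $N=h_1+ih_2$ is genuinely normal on the closure of its natural domain (guaranteeing $N^*N=NN^*$ and the normality of $L_N$). Once this is secured, everything else is a soft rigidity argument: Lemma~\ref{Lnorm<2dt} translates a small perturbation of the pair into a small norm perturbation of the bounded transform, Lemma~\ref{Lnorm} supplies a nonzero $\Z$-valued invariant of $L_\hbar$, and normality forces that invariant to be zero for any would-be commuting approximant, creating the contradiction uniformly in $\hbar$.
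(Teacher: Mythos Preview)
Your proof is correct and follows essentially the same approach as the paper: both argue by contradiction via the Fredholm index of the bounded transform $L_\hbar=(1+T_\hbar T_\hbar^*)^{-1/2}T_\hbar$, invoking Lemma~\ref{Lnorm<2dt} for the perturbation estimate and Lemma~\ref{Lnorm} for the nonzero index, and then obtain a contradiction from the normality of the approximant. The only difference is in the endgame: the paper shows $\pi(L_{\hbar})$ and the unitary part of $\pi(N_n)$ are both unitaries in the Calkin algebra at distance $<1$, hence in the same $K_1$-class, whereas you use multiplicativity of the Fredholm index via $\pi(L_N L_\hbar^*)\approx 1$; your route is slightly more direct but the substance is identical.
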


\begin{proof}
Suppose to the contrary that for some sequence $\hbar_n\to0$ there exist self-adjoint 
operators $h_{1,n},\,h_{2,n}$ with $h_{1,n}h_{2,n}=h_{2n}h_{1,n}$ such that
\[
\lVert\,T_n - T_{\hbar_n}\rVert 
\;=\; \bigl\lVert(h_{1,n} + i\,h_{2,n}) - (S_1 + i\,S_{\hbar_n})\bigr\rVert 
\;<\; \tfrac{1}{32},
\]
where $T_n:= h_{1,n} + i\,h_{2,n}$ and $T_{\hbar_n}=S_1+i \,S_{\hbar_n}.$
It then follows that $T_n$ is normal hence closed. 
By Lemma \ref{Lclose}, $S_1+i\, S_{\hbar}$ is closable (in fact, in this case it is known
that the annihilation operator is always closable). 
Let 
\beq
N_n=(1+T_nT_n^*)^{-1/2}T_n\andeqn  L_{\hbar_n}=(1+T_{\hbar_n}T_{\hbar_n}^*)^{-1/2} T_{\hbar_n}.
\eneq
Then 
\beq
\|N_n\|\le 1\andeqn \|L_{\hbar_n}\|\le 1.
\eneq
In particular, $N_n$ is a bounded normal operator.
Suppose that
\beq
\|T_n-(T_{\hbar_n})\|<1/32,\,\,\, n\in \N.
\eneq
Then, by Lemma \ref{Lnorm<2dt} with $\delta=1/32$, 
\beq
\|N_n-L_{\hbar_n}\|<1/16.
\eneq
Passing to the Calkin algebra, it follows that 
\beq\label{Tnotrecover-10}
\|\pi(N_n)-\pi(L_{\hbar_n})\|<1/16,
\eneq
where $\pi: B(L^2(\R))\rightarrow B(L^2(\R))/\mathcal{K}$ is the quotient map.
Note that, by Lemma \ref{Lnorm},
$\pi(L_{\hbar_n})\pi(L_{\hbar_n}^*)=1$  and  $\|\pi(L_{\hbar_n})\|=1.$  
 Since $N_n$ is normal, we also have 
that
\beq
&&\hspace{-0.4in}\|\pi(N_n^*N_n)-1\|=\|\pi(N_nN_n^*)-1\|\\
&&\le \|\pi(N_nN_n^*)-\pi(N_n L_{\hbar_n}^*)\|+\|\pi(N_n L_{\hbar_n}^*)-\pi( L_{\hbar_n} L_{\hbar_n}^*)\|+
\|\pi( L_{\hbar_n}L_{\hbar_n}^*)-1\|\\
&&\le \|\pi(N_n^*)-\pi(L_{\hbar_n}^*)\|+\|\pi(N_n)-\pi( L_{\hbar_n})\|<1/8.
\eneq
This in particular implies that both $\pi(N_n)$ and $\pi(N_n^*)$ are invertible. 
It follows this and \eqref{Tnotrecover-10}   that $\pi(L_{\hbar_n})$ is invertible. Hence it is a unitary, denoted by $U$. 
We also have 
\beq
\hspace{-0.4in}\|\pi(L_{\hbar_n})-\pi(N_n)|\pi(N_n)|^{-1}\|&\le& \||\pi(L_{\hbar_n})-\pi(N_n)\|+\|\pi(N_n)-\pi(N_n)|\pi(N_n)|^{-1}\|\\
&<&1/16+\|1-|\pi(N_nN_n^*)|^{1/2}\|\\
&<&1/16+1/8<1.
\eneq
Note that $V:=\pi(N_n)|\pi(N_n)|^{-1}$ is a unitary.
So $U$ and $V$ must be in the same component of the set of unitaries $U(B({L^2(\R)})/{\cal K}).$
However since $N_n$ is normal, it has zero  Fredholm index.  In other words, $[V]=0$ in $K_1(B({L^2(\R)})/{\cal K}),$ which contradicts to the fact that $L_{\hbar_n}$ has Fredholm index $1.$ 
\end{proof}

\begin{NN} \textbf{Angular-momentum example.}
{\rm The angular momentum operators $L_x, L_y$ and $L_z$ in $L^2(\R^3)$ are defined as follows:
\beq
L_x&=&-i \hbar \left(y{\partial\over{\partial z}}-z{\partial \over{\partial y}}\right),\\
L_y&=&-i \hbar \left(z{\partial\over{\partial x}}-x{\partial \over{\partial z}}\right)\andeqn\\
L_z&=&-i \hbar \left(x{\partial\over{\partial y}}-y{\partial \over{\partial x}}\right).
\eneq
These are unbounded densely self-adjoint operators. 
They satisfy the following properties:
\beq\label{Lxcomm-1}
{[}L_x, L_y{]}&=&{ i}\hbar L_z, \\\label{Lxcomm-2}
{[}L_x, L_z{]}&=&{i}\hbar L_y,\\\label{Lxcomm-3}
{[}L_y, L_z{]}&=&{i}{\hbar} L_x\andeqn\\\label{Lxcomm-4}
{[}L_j, {\bf L}^2{]}&=&0,\,\,\, j=x,y,z,
\eneq
where ${\bf L}^2=L_x^2+L_y^2+L_z^2.$
Define
\beq
\tilde L_j=L_j(1+{\bf L}^2)^{-1},\,\,\, j=x, y,z.
\eneq
It is also known that ${\bf L}^2$ has eigenvalues $m(m+1)\hbar,$ for $m=0,1,2,\ldots$ (see Section 3.5 of~\cite{MQM}).
}
\end{NN}

We realize that, in the  angular-momentum system, commutators 
such as $[L_x, L_y]$ is a unbounded operator which is not small even $|\hbar |$ is small.
Nevertheless, AMU states also exist in this case as shown in the next theorem.

\begin{thm}\label{LAnglemom}
For any $\ep>0$ and $M>1,$ there exists $\eta>0$ and $\dt>0$ such that, when $|\hbar |<\dt,$

{{\rm (i)}} $S{\rm Sp}_M^{\eta}(L_x, L_y, L_z)\not=\emptyset;$

{{\rm (ii)}}  for each $\lambda=(\lambda_x, \lambda_y, \lambda_z)\in S{\rm Sp}_M^{\eta}(L_x, L_y, L_z),$
there exists $v\in L^2(\R^3)$ with $\|v\|=1$ such that
\beq
\|L_j(v)-\lambda_j v\|<\ep,\,\,\, j=x, y, z.
\eneq

\end{thm}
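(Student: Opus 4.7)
\textbf{Proof proposal for Theorem \ref{LAnglemom}.} The plan is to apply Theorem \ref{TAMU1} to the triple $(h_1,h_2,h_3):=(L_x,L_y,L_z)$, for which the associated operator is exactly $d=L_x^2+L_y^2+L_z^2=\mathbf{L}^2$. Although the raw commutators $[L_i,L_j]=i\hbar\,\epsilon_{ijk}L_k$ are unbounded and so Theorem \ref{TAMU''} is not directly applicable, Theorem \ref{TAMU1} is phrased entirely in terms of the bounded transforms $\tilde h_j=h_j(1+d)^{-1}$, and so is tailored for exactly this situation. The strategy is therefore to verify, as $|\hbar|\to 0$, (a) $\|\tilde L_j-\tilde L_j^*\|$ is arbitrarily small (in fact zero), (b) $\|[\tilde L_i,\tilde L_j]\|\lesssim|\hbar|$, and (c) ${\rm sp}(\mathbf{L})\cap[0,M]\neq\emptyset$.

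The engine for (a) and (b) is the relation $[L_j,\mathbf{L}^2]=0$. The first step is to promote this \emph{a priori} identity on a common invariant core to strong commutation of the self-adjoint closures; this is standard once one exhibits a dense set of smooth vectors on which $L_j$ and $\mathbf{L}^2$ act and commute and which is a core for both operators (the Schwartz space on $\mathbb{R}^3$ works, as do the joint eigenvectors of $\mathbf{L}^2$ and $L_z$). Strong commutation then gives, via the Borel functional calculus, that $L_j$ commutes with $(1+\mathbf{L}^2)^{-1}$ and $(1+\mathbf{L}^2)^{-1/2}$. Consequently
\[
\tilde L_j \;=\; L_j(1+\mathbf{L}^2)^{-1} \;=\; (1+\mathbf{L}^2)^{-1/2}L_j(1+\mathbf{L}^2)^{-1/2},
\]
which by Lemma \ref{Linqx} is bounded and, by the displayed symmetric form, self-adjoint; this settles (a), since in fact $\|\tilde L_j-\tilde L_j^*\|=0$ for every $\hbar$. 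For (b),
\[
[\tilde L_i,\tilde L_j] \;=\; (1+\mathbf{L}^2)^{-1}[L_i,L_j](1+\mathbf{L}^2)^{-1} \;=\; i\hbar\,\epsilon_{ijk}\,(1+\mathbf{L}^2)^{-1}\tilde L_k,
\]
and since $\|(1+\mathbf{L}^2)^{-1}\|\le 1$ and $\|\tilde L_k\|\le 1$ by Lemma \ref{Linqx}, we obtain $\|[\tilde L_i,\tilde L_j]\|\le |\hbar|$. Finally, (c) is immediate because $\mathbf{L}^2$ has $0$ as an eigenvalue ($m=0$ in the list $m(m+1)\hbar$), so $0\in{\rm sp}(\mathbf{L})\cap[0,M]$ for every $M>1$.

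With these three ingredients in hand, fix $\epsilon>0$ and $M>1$, and let $\eta>0$ and $\delta_0>0$ be the constants supplied by Theorem \ref{TAMU1} applied to $n=3$, $\epsilon$ and $M$. Set $\delta=\delta_0$. Then $|\hbar|<\delta$ forces $\|[\tilde L_i,\tilde L_j]\|<\delta$ and $\|\tilde L_i-\tilde L_i^*\|=0<\delta$, while (c) holds unconditionally; hence Theorem \ref{TAMU1} yields both (i) $S{\rm Sp}_M^\eta((L_x,L_y,L_z))\neq\emptyset$ and (ii) for each $\lambda=(\lambda_x,\lambda_y,\lambda_z)$ in this set a unit vector $v\in L^2(\mathbb{R}^3)$ with $\max_j\|L_j v-\lambda_j v\|<\epsilon$. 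The only delicate step is the passage from the formal commutation $[L_j,\mathbf{L}^2]=0$ on a dense domain to strong commutation of the closures, because without this one cannot justify moving $(1+\mathbf{L}^2)^{-1}$ past $L_j$ inside the displayed commutator identity above; once strong commutation is secured, the entire proof reduces to the two-line functional-calculus estimate together with a direct invocation of Theorem \ref{TAMU1}.
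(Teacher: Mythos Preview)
Your proposal is correct and follows essentially the same route as the paper: verify the hypotheses of Theorem \ref{TAMU1} for the triple $(L_x,L_y,L_z)$ using $[L_j,\mathbf{L}^2]=0$ to get $\tilde L_j^*=\tilde L_j$ and $\|[\tilde L_i,\tilde L_j]\|\le|\hbar|$, check the spectral condition on $d^{1/2}=\mathbf{L}$, and invoke Theorem \ref{TAMU1}. Two minor remarks: your observation that $0\in{\rm sp}(\mathbf{L})$ (from $m=0$) makes the spectral condition hold for all $\hbar$, which is cleaner than the paper's use of $|\hbar|<1/16$; and you are right to flag the passage from formal commutation on a core to strong commutation as the only point requiring care---the paper passes over this silently, but the standard spherical-harmonic decomposition of $L^2(\mathbb{R}^3)$ supplies the needed joint spectral resolution.
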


\begin{proof}
Since ${\bf L}^2$ has eigenvalues $m(m+1)\hbar,$ when $|\hbar|<1/16,$  for any $M>1,$ 
${\rm sp}({\bf L})\cap [0,1]\not=\emptyset. $ 
Choose $\dt_1=\dt(\ep,M)$ in Theorem \ref{TAMU1} for $\ep$ and $M.$ 
Choose $\dt=\min\{\dt_1, 1/16\}.$ 
One computes that (for all $\hbar$)
\beq
\|\tilde L_j\|\le 1,\,\,\, j=x, y,z.
\eneq
By \eqref{Lxcomm-4}, 
\beq
[L_j,\, (1+{\bf L}^2)^{-1}]=0,\,\,\, j=x, y, z.
\eneq
One also computes that whenever $|\hbar|<\delta,$
\beq
\|[\tilde L_x, \tilde L_y]\|&=&\|L_x(1+{\bf L}^2)^{-1}L_y(1+{\bf L}^2)^{-1}-L_y(1+{\bf L}^2)^{-1}L_x(1+{\bf L}^2)^{-1}\|\\
&=&\|(1+{\bf L}^2)^{-2}[L_x, L_y](1+{\bf L}^2)^{-2}\|\\
&=&|\hbar|\|(1+{\bf L}^2)^{-2}L_z(1+{\bf L}^2)^{-2}\|\\
&\le & |\hbar|<\dt.
\eneq
Similarly
\beq
\|[\tilde L_j,\tilde L_i]\|<\dt\andeqn \tilde L_j^*=\tilde L_j,\,\,\, i,j\in\{x, y, z\}.
\eneq
Therefore Theorem \ref{TAMU1} applies. In other words, (i) holds, and there exists a unit vector $v\in L^2(\R^3)$ such that
\beq
\|L_j(v)-\lambda_j v\|<\ep, \,\,\, j=x, y,z.
\eneq
\end{proof}

%

%
%

\noindent
hlin@uoregon.edu and hlin@simis.cn\\
Shanghai Institute
for Mathematics and Interdisciplinary Sciences, 657 Songhu Road, Shanghai 200433, China

\noindent
wanghang@math.ecnu.edu.cn 
\\
Research Center of Operator Algebras, East China Normal University,  Shanghai 200062, China 

\end{document}